\newcommand\1{\lower 9pt\hbox{\underbar{}}}
\numberwithin{equation}{section}
\newtheorem {theorem} {Theorem} [section]
\newtheorem {lemma}[theorem]{Lemma}
\newtheorem {prop}[theorem]     {Proposition}
\newtheorem {Corollary}  [theorem]               {Corollary}
\theoremstyle{definition}
\newtheorem {defi}[theorem]{Definition}
\newtheorem {Remark}[theorem]          {Remark}
\newtheorem* {theorem*}{Theorem}
\newcommand{\pr} {\smallskip\noindent{\bf Proof\,\,}}
\def\R{\mathbb{R}}
\def\C{\mathbb{C}}
\newcommand{\Op}{\mathcal{O}p}
\DeclareMathOperator\Div{div}
\DeclareMathOperator\curl{curl}
\title{Universality of Euler flows and flexibility of Reeb embeddings}
\author{Robert Cardona}\address{ Robert Cardona,
Laboratory of Geometry and Dynamical Systems, Department of Mathematics, Universitat Polit\`{e}cnica de Catalunya and BGSMath Barcelona Graduate School of
Mathematics,  Avinguda del Doctor Mara\~{n}on 44-50, 08028 , Barcelona  \it{e-mail: robert.cardona@upc.edu }
 }
 \thanks{Robert Cardona acknowledges financial support from the Spanish Ministry of Economy and Competitiveness, through the Mar\'ia de Maeztu Programme for Units of Excellence in R\& D (MDM-2014-0445) via an FPI grant.}
\author{Eva Miranda}\address{ Eva Miranda,
Laboratory of Geometry and Dynamical Systems, Department of Mathematics-IMTech, EPSEB, Universitat Polit\`{e}cnica de Catalunya and
\\ Centre de Recerca Matem\`{a}tica, Campus UAB Edifici C, 08193 Bellaterra, Barcelona
 \it{e-mail: eva.miranda@upc.edu}
 }
\thanks{Robert Cardona and Eva Miranda are supported by the grant PID2019-103849GB-I00 of MCIN/ AEI /10.13039/501100011033 and AGAUR grant 2021SGR00603. Eva Miranda  is supported by the Catalan Institution for Research and Advanced Studies via an ICREA Academia Prize 2016 and by the Spanish State
Research Agency, through the Severo Ochoa and Mar\'{\i}a de Maeztu Program for Centers and Units
of Excellence in R\&D (project CEX2020-001084-M) }
\author{Daniel Peralta-Salas} \address{Daniel Peralta-Salas, Instituto de Ciencias Matem\'aticas-ICMAT, C/ Nicol\'{a}s Cabrera, nº 13-15 Campus de Cantoblanco, Universidad Aut\'{o}noma de Madrid,
28049 Madrid, Spain \it{e-mail: dperalta@icmat.es} }
\thanks{Daniel Peralta-Salas is supported by the grant PID2019-106715GB GB-C21 funded by MCIN/AEI/10.13039/501100011033. Robert Cardona, Eva Miranda and Daniel Peralta-Salas acknowledge partial support from the grant ``Computational, dynamical
and geometrical complexity in fluid dynamics'', Ayudas Fundaci\'on BBVA a Proyectos de Investigaci\'on Cient\'ifica 2021.}
\author{Francisco Presas} \address{Francisco Presas, Instituto de Ciencias Matem\'aticas-ICMAT, C/ Nicol\'{a}s Cabrera, nº 13-15 Campus de Cantoblanco, Universidad Aut\'{o}noma de Madrid,
28049 Madrid, Spain \it{e-mail: fpresas@icmat.es} }
\thanks{Francisco Presas is supported by the grant MTM2016-79400-P. Daniel Peralta-Salas and Francisco Presas also acknowledge partial support from the ICMAT-Severo Ochoa grant CEX2019-000904-S}
\begin{document}

\begin{abstract}

The dynamics of an inviscid and incompressible fluid flow on a Riemannian manifold is governed by the Euler equations. Recently, Tao~\cite{T2,T3} launched a programme to address the global existence problem for the Euler and Navier Stokes equations based on the concept of universality. Inspired by this proposal, in this article we prove that the stationary Euler equations exhibit several universality features. More precisely, we show that any non-autonomous flow on a compact manifold can be extended to a smooth stationary solution of the Euler equations on some Riemannian manifold of possibly higher dimension. The solutions we construct are of Beltrami type, and being stationary they exist for all time. Using this result, we establish the Turing completeness of the steady Euler flows, i.e., there exist solutions that encode a universal Turing machine and, in particular, these solutions have undecidable trajectories. Our proofs deepen the correspondence between contact topology and hydrodynamics, which is key to establish the universality of the Reeb flows and their Beltrami counterparts. An essential ingredient in the proofs, of interest in itself, is a novel flexibility theorem for embeddings in Reeb dynamics in terms of an $h$-principle in contact geometry, which unveils the flexible behavior of the steady Euler flows. These results can be viewed as lending support to the intuition that solutions to the Euler equations can be extremely complicated in nature.
\end{abstract}

\maketitle

\section{Introduction}\label{S1}

The dynamics of an inviscid and incompressible fluid flow on a Riemannian manifold $(M,g)$ is described by the Euler equations
\begin{equation}
\partial_t u+\nabla_uu=-\nabla p\,, \qquad \Div u=0\,,
\end{equation}
where $u$ is the velocity field of the fluid, and $p$ is the pressure function. Here $\nabla_u$ is the covariant derivative along $u$ and the differential operators $\nabla$ and $\Div$ are computed using the Riemannian metric $g$.

The analysis of the evolution $u(\cdot,t)$ of a smooth initial condition $u(\cdot,0):=u_0(\cdot)$ is a notoriously difficult problem where even the existence of a global-time solution is a challenging open question (the celebrated blow-up problem for the Euler equations). Recently, Terry Tao launched a programme to address the global existence problem, not only for the Euler equations, but also for their viscid counterpart, i.e. the Navier-Stokes equations, based on the concept of \emph{universality}~\cite{T1,T2,T3}. This notion concerns the Euler equations without fixing neither the ambient manifold $M$ nor the metric $g$, and roughly speaking can be defined as the property that any smooth non-autonomous flow on a manifold $N$ may be ``extended'' to a solution of the Euler equations for some $(M,g)$, where the dimension of $M$ is usually much bigger than the dimension of $N$. In~\cite{T3}, Tao introduced a particular way of extending a smooth (non-autonomous) flow on $N$ to a solution of the Euler equations on a manifold $M$ which is a product $M=N\times \mathbb T^m$ endowed with a warped product metric $g$. In particular, he showed that the set of flows that are extendible in the aforementioned sense is the countable union of nowhere dense sets (in the smooth topology), and that there exists a somewhere dense set of flows that can be extended provided that $N$ is diffeomorphic to the $n$-torus, $n\geq 2$.
This interesting result provides further evidence of the universality of the Euler dynamics, but leaves open the problem whether the Euler equations on some high-dimensional Riemannian manifold can encode the behavior of a universal Turing machine~\cite{T1,T2}. Tao discussed in~\cite{T0,TNat} that the ``Turing completeness'' of the Euler equations could be used as a route to construct solutions of the Navier-Stokes equations that blow-up in finite time, by creating an initial datum that is ``programmed'' to evolve to a rescaled version of itself (as a Von Neumann self-replicating machine).

Motivated by Tao's proposal, our goal in this paper is to address the study of the universality of the Euler equations using stationary solutions, which model fluid flows in equilibrium. While at first glance it seems that the steady Euler flows are too restrictive to encode arbitrarily complicated dynamics, we shall see that the surprising connection between the Euler equations and contact topology, allows us to use the flexibility provided by the existence of $h$-principles in the contact realm to show that the stationary solutions exhibit universality features, and in particular they are Turing complete. It is worth noting that topological, rather than dynamical, universality properties of steady Euler flows were established recently in \cite{C1}.

To this end, we introduce the concept of \emph{Eulerisable flow}~\cite{P1}: a volume-preserving (autonomous) vector field $u$ on $M$
is Eulerisable if there exists a Riemannian metric $g$ on $M$ compatible with the volume form, such that $u$ satisfies the stationary Euler equations on $(M,g)$
\begin{equation}\label{euler_stat}
\nabla_uu=-\nabla p\,, \qquad \Div u=0\,.
\end{equation}
When the dimension of $M$ is odd, a particularly relevant class of Eulerisable fields are those which are proportional to their curl through a not necessarily constant factor (a definition of the curl of a vector field in dimension $n>3$, which is a nonlinear differential operator which assigns to a vector field another vector field, will be introduced in Section~\ref{S2}). These vector fields are known as \emph{Beltrami flows}, and in recent years they have found application as powerful tools to analyze different features of fluid flows, including anomalous weak solutions~\cite{LS09}, complicated vortex structures~\cite{EP1,EP2} and reconnections in Navier-Stokes~\cite{ELP}. The geometric content of the Beltrami fields was unveiled in~\cite{EG,Re}, where connections with Reeb fields of a contact structure and with geodesible flows were established. This remarkable connection, which will be exploited in this paper, allows one to bring tools from (high dimensional) contact topology to the analysis of the stationary Euler equations provided that the Riemannian metric is not fixed, which is precisely the context where Tao introduced the notion of universality.

To state our main results, we need to provide a geometric definition of extendibility.
The following is inspired by Tao's definition in~\cite{T3} but it is different in two aspects. First, the ambient manifold is not a product $N\times \mathbb T^m$, but a high-dimensional sphere, and the metric is not a warped product. Second, since we deal with stationary (i.e., time-independent) solutions of the Euler equations, the time coordinate of the original non-autonomous field becomes one of the spatial local coordinates of the Eulerisable flow.

\begin{defi}\label{def1}
A non-autonomous time-periodic vector field $u_0(\cdot,t)$ on a compact manifold $N$ is \emph{Euler-extendible} if there exists an embedding $e:N\times \mathbb S^1 \rightarrow \mathbb S^n$ for some dimension $n>\text{dim }N+1$ (that only depends on the dimension of $N$), and an Eulerisable flow $u$ on $\mathbb S^n$, such that $e(N\times\mathbb S^1)$ is an invariant submanifold of $u$ and $e_*(u_0(\cdot,\theta)+\partial_\theta)=u$, $\theta\in\mathbb S^1$. If the non-autonomous field $u_0(\cdot,t)$ is not time-periodic, we say it is Euler-extendible if there exists a proper embedding $e:N\times \mathbb R \rightarrow \mathbb R^n$ for some dimension $n>\text{dim }N+1$ (that only depends on the dimension of $N$), and an Eulerisable flow $u$ on $\mathbb R^n$, such that $e(N\times\mathbb R)$ is an invariant submanifold of $u$ and $e_*(u_0(\cdot,\theta)+\partial_\theta)=u$, $\theta\in\mathbb \R$.
\end{defi}

{ {Next, we introduce the following} notion of universality.}

\begin{defi}\label{def:universalEuler}
{ We say that the stationary Euler flows are \emph{universal} if any non-autonomous dynamics $u_0(\cdot,t)$ (on any ambient space) is Euler-extendible.}
\end{defi}

\begin{Remark}
In the time-periodic case, the choice of the ambient manifold $\mathbb S^n$, where the Eulerisable flow $u$ is defined, is made for the sake of concreteness, but all the results we state in this paper hold for any other manifold. However, for general non-autonomous dynamics, the ambient space where $u$ is defined does not need to be $\mathbb R^n$, but it must be noncompact (because we need to embed properly $N\times\mathbb R$).
\end{Remark}

Roughly speaking, the extendibility of a non-autonomous dynamics implies that, in the appropriate local coordinates, $u_0$ describes the ``horizontal'' behavior of the integral curves of the extended vector field $u$. We want to emphasize that $u_0$ is not assumed to be volume-preserving, although certainly $u$ will be.

We are now ready to present our first main result, which shows that the Eulerisable flows are flexible enough to encode any non-autonomous dynamics as above. Since these fields are stationary solutions of the Euler equations on some $(M,g)$, they exist for all time.

\begin{theorem}\label{T.main1}
The stationary Euler flows are universal. Moreover, the dimension of the ambient manifold $\mathbb S^n$ or $\mathbb R^n$ is the smallest odd integer~$n\in\{3\text{ dim }N+5,3\text{ dim }N+6\}$. In the time-periodic case, the extended field $u$ is a steady Euler flow with a metric $g=g_0+\delta_P$, where $g_0$ is the canonical metric on $\mathbb S^n$ and $\delta_P$ is supported in a ball that contains the invariant submanifold $e(N\times\mathbb S^1)$.
\end{theorem}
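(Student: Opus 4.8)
The plan is to reduce the construction of the Eulerisable extension to a purely contact-topological realization problem, and then to solve that problem with the flexibility ($h$-principle) theorem for Reeb embeddings. The starting point is the Beltrami--Reeb correspondence recalled in the introduction: if $\alpha$ is a contact form on an odd-dimensional manifold $M^n$ and $R$ is its Reeb field, then $R$ is geodesible and preserves the contact volume $\alpha\wedge(d\alpha)^{(n-1)/2}$, so the metric making $\alpha$ and $d\alpha$ compatible turns $R$ into a Beltrami field (proportional to its curl), hence a stationary solution of the Euler equations. Conversely a nonvanishing field is a reparametrized Reeb field precisely when it is rotational Beltrami. It therefore suffices to realize the suspension vector field as (a reparametrization of) a Reeb field on $\mathbb{S}^n$ or $\mathbb{R}^n$ with $e(N\times\mathbb{S}^1)$ invariant; note that $u_0$ need not be volume-preserving, as the preserved volume and the divergence-free condition are supplied by the contact data of the ambient field $u$.

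Given a time-periodic $u_0(\cdot,t)$, I would form the suspension field $X:=u_0(\cdot,\theta)+\partial_\theta$ on the compact manifold $N\times\mathbb{S}^1$; it is nowhere vanishing thanks to the $\partial_\theta$ component. I would then invoke the flexibility theorem for Reeb embeddings established earlier in the paper, which asserts that a compact manifold carrying a nonvanishing vector field embeds into the standard contact sphere $(\mathbb{S}^n,\xi_{\mathrm{std}})$ as an invariant submanifold of the Reeb flow of some defining contact form, with the prescribed field equal to the restricted Reeb field, provided the ambient dimension is large enough. The required codimension is governed by the formal, bundle-theoretic data that the $h$-principle must integrate — an isotropic-type condition on the intersection $TQ\cap\xi$ of the image $Q=e(N\times\mathbb{S}^1)$ with the contact distribution, together with a Whitney embedding of $N\times\mathbb{S}^1$ — and tracking these constraints yields the admissible dimensions $3\,\text{dim }N+5$ and $3\,\text{dim }N+6$, from which one selects the smallest odd value. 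The resulting Reeb field restricts to $X$ on $Q$, so the global Beltrami field $u$ it determines satisfies $e_*(u_0(\cdot,\theta)+\partial_\theta)=u$ on the nose.

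To obtain the metric $g=g_0+\delta_P$, I would apply the flexibility theorem in its relative, compactly supported form inside a Darboux ball $B\subset\mathbb{S}^n$ containing the image. One starts from the standard contact form on $\mathbb{S}^n$, whose Reeb field is already Beltrami for the round metric $g_0$ (the Hopf field being the model case), and modifies $\alpha$ only within $B$ to install the embedded suspension as a Reeb-invariant submanifold; the compatible metric is then altered only inside $B$, so $\delta_P$ is supported there, while outside $B$ the field remains the standard Beltrami field of $g_0$ and the two contact metrics glue smoothly. For non-periodic $u_0$ the same argument runs with $N\times\mathbb{R}$ properly embedded in $\mathbb{R}^n$, the noncompactness of the target being what makes the proper embedding possible. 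The main obstacle is the flexibility theorem itself: one must check that the formal homotopy-theoretic data demanded by the $h$-principle can be constructed in the stated codimension, and that the principle can be run relatively so that the perturbation — hence $\delta_P$ — stays confined to a ball. Everything else is either the standard Reeb--Beltrami dictionary or routine bookkeeping.
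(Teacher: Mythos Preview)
Your overall architecture matches the paper's proof exactly: suspend, Reeb-embed, then translate back via the Beltrami--Reeb dictionary. But there is one genuine gap. You write that the flexibility theorem ``asserts that a compact manifold carrying a \emph{nonvanishing} vector field embeds into the standard contact sphere $(\mathbb{S}^n,\xi_{\mathrm{std}})$ as an invariant submanifold of the Reeb flow''. That is not what Theorem~\ref{main2} says: its hypothesis is that the field be \emph{geodesible}, and this is not a cosmetic restriction --- geodesibility is also necessary, since any Reeb field restricted to an invariant submanifold is geodesible (Lemma~\ref{L.Inaba} and the remark following it). A generic nonvanishing field on $N\times\mathbb S^1$ is \emph{not} geodesible, so ``nonvanishing thanks to the $\partial_\theta$ component'' is insufficient to feed into the $h$-principle.

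The missing step is the one-line observation that makes the suspension special: the closed $1$-form $\beta:=d\theta$ satisfies $\beta(X)=1$ and $\iota_X d\beta=0$, so Gluck's criterion gives geodesibility of $X=u_0+\partial_\theta$. This is precisely why the paper singles out geodesibility in Section~\ref{SS.geod} and why the suspension trick (rather than a direct embedding of $(N,u_0)$) is needed --- $u_0$ itself need not be geodesible. Once you insert this verification, your argument coincides with the paper's: Theorem~\ref{main2} applied to $(N\times\mathbb S^1,X)$ gives the Reeb embedding into $(\mathbb S^n,\xi_{\mathrm std})$ with $n$ the smallest odd integer $\geq 3(\dim N+1)+2=3\dim N+5$, the contact form equals $\alpha_{\mathrm{std}}$ outside a ball $B\supset e(N\times\mathbb S^1)$, and the adapted metric of~\eqref{met} is the round $g_0$ outside $B$. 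The non-periodic case is handled identically via Remark~\ref{open}.
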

\begin{Remark}
The extension of the non-autonomous flow $u_0$ to an Eulerisable flow on, say, $\mathbb S^n$ is not unique. In fact, we prove that given any embedding $\tilde e:N\times \mathbb S^1 \rightarrow \mathbb S^n$, there exists a smooth embedding $e$ isotopic to $\tilde e$ and $C^0$-close to it which gives the Euler extension of $u_0$ introduced in Definition~\ref{def1}.
\end{Remark}

A striking corollary of this result, which illustrates the implications of the universality, is the embeddability of diffeomorphisms. We say that a (orientation-preserving) diffeomorphism $\phi:N\rightarrow N$ is \emph{Euler-embeddable} if there exists an Eulerisable field $u$ on $\mathbb S^n$ (for some $n$ that only depends on the dimension of $N$) with an invariant submanifold exhibiting a cross-section diffeomorphic to $N$ such that the first return map of $u$ at this cross section is conjugate to $\phi$.

\begin{Corollary}\label{Cor:difeo}
Let $N$ be a compact manifold and $\phi$ an orientation-preserving diffeomorphism on $N$. Then $\phi$ is Euler-embeddable in dimension $n$, where $n$ is the smallest odd integer $n\in\{3\text{ dim }N+5,3\text{ dim }N+6\}$.
\end{Corollary}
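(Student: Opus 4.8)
The plan is to realize $\phi$ as the first-return map of a \emph{suspension flow}, the standard device for turning a diffeomorphism into a continuous-time system. I would form the mapping torus
\[
N_\phi=(N\times\mathbb{R})/\bigl((x,s)\sim(\phi(x),s-1)\bigr),
\]
a compact manifold of dimension $\dim N+1$ fibering over $\mathbb{S}^1$ with fiber $N$, and equip it with the suspension field $X$ induced by $\partial_s$. By construction the fiber $N\times\{0\}$ is a global cross-section of $X$ whose first-return map is exactly $\phi$, so Euler-embeddability of $\phi$ reduces to exhibiting $N_\phi$ as an invariant submanifold of an Eulerisable field $u$ on $\mathbb{S}^n$ with $u|_{N_\phi}$ conjugate, up to reparametrization, to $X$. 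Note that the dynamics on the invariant submanifold is \emph{not} constrained by volume-preservation of $u$: since $N_\phi$ has large codimension, the flux obstruction is vacuous and $u|_{N_\phi}$ need not preserve any volume even though $u$ does, so $\phi$ is allowed to be an arbitrary orientation-preserving diffeomorphism. The orientation-preserving hypothesis enters only to ensure that $N_\phi$ is orientable, as required by the embedding and contact constructions.

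When $\phi$ is isotopic to the identity the argument is immediate. Then $N_\phi\cong N\times\mathbb{S}^1$ and, writing $\phi$ as the time-one map of the isotopy, $X$ is conjugate to $u_0(\cdot,\theta)+\partial_\theta$ for the generating non-autonomous field $u_0$ on $N$. Theorem~\ref{T.main1} applied to $u_0$ then produces the desired Eulerisable extension, whose cross-section $N\times\{0\}$ carries the return map $\phi$; the ambient dimension is the smallest odd integer in $\{3\dim N+5,3\dim N+6\}$, exactly as claimed.

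The general case is the crux, and it is precisely here that the black-box form of Theorem~\ref{T.main1} is insufficient: that statement produces the \emph{trivial} product $N\times\mathbb{S}^1$ as invariant submanifold, hence a return map lying in $\mathrm{Diff}_0(N)$, whereas a general orientation-preserving $\phi$ need not be isotopic to the identity and $N_\phi\to\mathbb{S}^1$ is then a nontrivial bundle. I would therefore revisit the construction underlying Theorem~\ref{T.main1} and check that it applies verbatim to $N_\phi$. The only structural inputs that construction uses are that the invariant submanifold be compact, of dimension $\dim N+1$, and carry a nonvanishing \emph{geodesible} flow; triviality of the fibration is never used. That $X$ is geodesible is clear, since the pullback $\alpha=\pi^*d\theta$ of the base angle form satisfies $\alpha(X)=1$ and $\iota_X d\alpha=0$. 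One then embeds $N_\phi$ into $\mathbb{S}^n$ and uses the Reeb--Beltrami correspondence together with the contact $h$-principle for Reeb embeddings to upgrade $X$ to the restriction of a Beltrami, hence Eulerisable, field $u$. Because $\dim N_\phi=\dim N+1$ coincides with $\dim(N\times\mathbb{S}^1)$, the embedding dimension is governed by the same quantity and is unchanged, namely the smallest odd integer in $\{3\dim N+5,3\dim N+6\}$.

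The main obstacle is thus the nontriviality of the bundle $N_\phi$ for $\phi\notin\mathrm{Diff}_0(N)$: one must confirm that the embedding and Eulerization steps of Theorem~\ref{T.main1} genuinely do not rely on the product structure. The two delicate points are (i) embedding the mapping torus into $\mathbb{S}^n$ compatibly with an ambient contact and Beltrami structure, and (ii) arranging, up to conjugation and smooth isotopy of $\phi$, that $X$ meets the regularity required by the $h$-principle; the freedom to realize the return map only \emph{up to conjugacy} is exactly what makes (ii) harmless.
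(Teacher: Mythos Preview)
Your approach is essentially the same as the paper's: form the mapping torus $N_\phi$, observe that the suspension field $X$ is geodesible via the closed $1$-form $\pi^*d\theta$, and then apply the Reeb embedding theorem (Theorem~\ref{main2}) together with the Reeb--Beltrami correspondence to obtain the Eulerisable extension on $\mathbb{S}^n$. The paper simply says ``arguing as before'' and applies the construction directly to the mapping torus, without splitting into the cases $\phi\in\mathrm{Diff}_0(N)$ and $\phi\notin\mathrm{Diff}_0(N)$; your separate treatment of the isotopic-to-identity case via Theorem~\ref{T.main1} is unnecessary, since the underlying machinery (Theorem~\ref{main2}) takes as input an arbitrary compact manifold with a geodesible field and never uses any product structure. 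Your worry (ii) about ``regularity required by the $h$-principle'' is unfounded: $X$ is smooth and geodesible as stated, and that is all Theorem~\ref{main2} needs.
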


Let us mention a few words on the ideas of the proof of Theorem~\ref{T.main1}. The Eulerisable field $u$ that we construct on $\mathbb S^n$ (or $\mathbb R^n$) is nonvanishing and of Beltrami type with constant proportionality factor (notice that $n$ is an odd number). Using the correspondence between these fields and contact forms, the universality problem is then tantamount to studying the universality features of high-dimensional Reeb flows. A first difficulty is that the Reeb flows are geodesible, so their restriction to any invariant submanifold must be geodesible as well. Introducing the concept of \emph{Reeb embedding} of a compact manifold into a contact manifold, and using the flexibility (existence of an $h$-principle) of the isocontact embeddings, we prove that in fact geodesibility is the only obstruction for a vector field to be extendable to a Reeb flow on some contact manifold. A second difficulty is that the field $u_0$ that we want to extend is not generally geodesible, a problem that we address considering the suspension of the field.

A consequence of our methods of proof, which is of interest in itself, is an almost sharp novel embedding theorem for manifolds endowed with a geodesible flow into a contact manifold, so that the Reeb field of the ambient manifold for some contact form extends the geodesible field on the submanifold.

\begin{defi}\label{def:Remb}
{ Let $(N,X)$ be a geodesible field on a compact manifold. An embedding $e: (N,X) \rightarrow (M,\xi)$ of $N$ into a contact manifold $M$ is called a \emph{Reeb embedding} if there is a contact form $\alpha$ defining $\xi$ such that its Reeb vector field $R$ satisfies $e_*X=R$ (in particular $e(N)$ is an invariant submanifold of $R$). }
\end{defi}
{ The following result is {the main \emph{flexibility theorem} in this article:}}
\begin{theorem} \label{main2}
Let $e: (N,X) \rightarrow (M,\xi)$ be a embedding of $N$ into a contact manifold $(M,\xi)$ with $X$ a geodesible vector field on $N$. Then:
\begin{itemize}
\item If $\dim M \geq 3\dim N +2$, then $e$ is isotopic to a (small) Reeb embedding $\tilde e$, and $\tilde e$ can be taken $C^0$-close to $e$.
\item If $\dim M \geq 3\dim N$ and $M$ is overtwisted, then $e$ is isotopic to a Reeb embedding.
\end{itemize}
\end{theorem}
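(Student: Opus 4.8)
The plan is to reformulate the Reeb embedding condition as an \emph{isocontact embedding} of an auxiliary contact manifold, and then to invoke the $h$-principle for isocontact embeddings in the appropriate dimension range. Recall first the infinitesimal description of geodesibility: a nonvanishing $X$ on $N$ is geodesible precisely when it admits a defining one-form $\lambda$ with $\lambda(X)=1$ and $\iota_X d\lambda=0$. If $e$ were already a Reeb embedding for some contact form $\alpha$ with $\ker\alpha=\xi$, then $\lambda:=e^*\alpha$ would satisfy exactly these two identities, since $\alpha(R_\alpha)=1$ and $\iota_{R_\alpha}d\alpha=0$ pull back along $e$; this is the necessary condition, and the content of the theorem is its converse under the stated dimension bounds.

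First I would build a model. Starting from $(N,X,\lambda)$ I construct a contact manifold $(\widehat N,\widehat\alpha)$ of dimension $2\dim N+1$ containing $N$, in such a way that the Reeb field of $\widehat\alpha$ is tangent to $N$ and restricts there to $X$. Concretely, $\widehat N$ is a rank-$(\dim N+1)$ thickening of $N$ whose normal directions carry a fibrewise symplectic structure chosen so that $d\widehat\alpha$ has maximal rank $2\dim N$ with kernel exactly $\R X$ along $N$. The role of the thickening is to absorb the degeneracy of $d\lambda$: the directions of $\ker d\lambda\cap TN$ other than $\R X$ are paired off against normal directions, and the remaining normal directions are made mutually symplectic, so that a rank-$(\dim N+1)$ normal bundle always suffices to complete $d\lambda$ to a contact form. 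By construction $N\hookrightarrow(\widehat N,\widehat\alpha)$ is tautologically a Reeb embedding.

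Next I would reduce the theorem to producing an isocontact embedding $\widehat e$ of $(\widehat N,\ker\widehat\alpha)$ into $(M,\xi)$ that extends $e$. Given such a $\widehat e$, with $\widehat e^*\xi=\ker\widehat\alpha$, the contact neighborhood theorem puts the ambient contact form, on a tubular neighborhood of $\widehat e(\widehat N)$, into the standard model $\widehat\alpha+\frac12\sum_i(u_i\,dv_i-v_i\,du_i)$ in conformal symplectic normal coordinates $(u_i,v_i)$; a direct computation then shows that its Reeb field is tangent to $\widehat e(\widehat N)$ and restricts on $\widehat e(N)$ to $\widehat e_*X$, so that $\tilde e:=\widehat e|_N$ is the sought Reeb embedding, isotopic to $e$ and inheriting $C^0$-closeness from $\widehat e$. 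To obtain $\widehat e$ I first assemble a \emph{formal} isocontact embedding extending $e$: because $\dim M$ is large, $e(N)$ has normal bundle of rank at least $2\dim N+2$ (resp.\ $2\dim N$), which leaves room inside a tubular neighborhood both to realize the thickening $\widehat N$ and to choose a bundle monomorphism matching the conformal symplectic data of $\xi$; the obstructions to making these choices coherently relative to $e$ lie in cohomology of $\widehat N$ with coefficients in homotopy groups of the relevant symplectic frame fibres, and they vanish once the codimension is large enough.

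Finally I would apply the $h$-principle for isocontact embeddings to upgrade this formal solution to a genuine one. For $\dim M\geq 3\dim N+2$, which is codimension at least $\dim N+1$ for $\widehat N$, the dense parametric form of the $h$-principle yields $\widehat e$ isotopic and $C^0$-close to the formal data, giving the first bullet; when $M$ is overtwisted the $h$-principle applies with the codimension requirement lowered by two, so that $\dim M\geq 3\dim N$ suffices, at the cost of losing $C^0$-control, which gives the second bullet. I expect the main obstacle to be the formal step: one must check that the degenerate two-form $d\lambda$ attached to a merely geodesible field, as opposed to a genuinely contact one, can be completed---coherently over all of $N$ and compatibly with the ambient $\xi$---to the maximal-rank form demanded of a contact structure, and that the resulting formal isocontact embedding is homotopic to $de$. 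Granting the isocontact $h$-principle in these ranges, the remainder is obstruction theory that the dimension hypotheses are precisely tuned to trivialize.
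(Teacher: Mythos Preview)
Your strategy is essentially the paper's: build a contact neighborhood $(\widehat N,\widehat\alpha)$ of $N$ in which $X$ is Reeb (the paper's Lemma~\ref{tech}), reduce to an isocontact embedding of this neighborhood into $(M,\xi)$, and invoke the isocontact $h$-principle (Theorem~\ref{isoc} in general, Theorem~\ref{CorBEM} for overtwisted targets). The paper packages this as an $h$-principle for \emph{formal iso-Reeb embeddings} (Theorems~\ref{hp1} and~\ref{hp2}), but the underlying mechanism is the one you outline, and your reduction via the contact neighborhood theorem is correct.

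The gap is that the step you flag as ``the main obstacle'' is in fact the entire content of the sharp dimension bounds, and you have not carried it out. What must be established is that the hyperplane field $\eta=\ker\lambda\subset TN$ can be homotoped, inside the ambient symplectic bundle $(\xi|_N,d\alpha)$, to an \emph{isotropic} subbundle; only then can the contact model $\widehat N$ and the formal isocontact data be assembled compatibly with the given $e$. The paper does this in Lemmas~\ref{lemmaHo} and~\ref{isotr} by computing the connectivity of the inclusion of the isotropic Grassmannian $\operatorname{Grass}_{is}(n-1,\R^{2m})$ into $\operatorname{Grass}(n-1,\R^{2m})$, via the long exact sequences of the pairs $(SO(2k),U(k))$ and the stable range of the spaces $\Gamma_k=SO(2k)/U(k)$. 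The threshold $2m\geq 3n-1$ emerges precisely from this computation (and the extra $+2$ in the tight case comes from splitting off a rank-$2$ symplectic summand to make the embedding \emph{small}, so that Theorem~\ref{isoc} applies in positive codimension). Your sentence ``the dimension hypotheses are precisely tuned to trivialize'' is exactly the statement of Lemma~\ref{isotr}, not a consequence of anything you have written; absent that lemma, the argument only recovers the weaker bound $\dim M\geq 4\dim N-1$ of Theorem~\ref{main1}, where the formal step is handled by crude connectivity as in Lemma~\ref{lem:sph}.

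A minor correction: the minimal contact thickening of $N$ has dimension $2\dim N-1$, not $2\dim N+1$; the paper takes the total space of $\eta^*\to N$, of fibre rank $\dim N-1$, with the canonical pairing on $\eta\oplus\eta^*$ perturbed by $d\beta$.
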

{ In view of the connection between Reeb and Beltrami fields which will be stated in Section~\ref{SS.Belt}, this theorem shows the flexible character of the steady Euler flows.} The notion of \emph{small Reeb embedding} in this statement will be introduced in Section~\ref{S4}. Moreover, we also obtain a full $h$-principle for what we call iso-Reeb embeddings (Reeb embeddings with certain fixed data) into overtwisted manifolds (Theorem~\ref{hp1}) and into general contact manifolds (Theorem~\ref{hp2}). We believe that these ideas may be useful to attack some purely geometric problems in Contact Topology.

Since Tao introduced the concept of universality to analyze the Turing completeness of the Euler equations~\cite{T0,T1}, we want to finish this introduction with an application of Theorem~\ref{T.main1} in this setting. We say that an Eulerisable flow on $\mathbb S^n$ is \emph{Turing complete} if the halting of any Turing machine with a given input is equivalent to a certain bounded trajectory of the flow entering a certain open set of $\mathbb S^n$ (what is known as the ``reachability problem'', see Section~\ref{SS5.2} for more details). This implies, in particular, that the flow has undecidable trajectories. Our second main result is the following.

\begin{theorem}\label{T.main2}
There exists an Eulerisable flow on $\mathbb S^{17}$ which is Turing complete.
\end{theorem}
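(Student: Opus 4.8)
\emph{Reduction.} The plan is to deduce Theorem~\ref{T.main2} from the Euler-embeddability of diffeomorphisms (Corollary~\ref{Cor:difeo}) by feeding it a diffeomorphism that simulates a universal Turing machine. Concretely, I would first construct an orientation-preserving diffeomorphism $\phi$ of a \emph{compact $4$-manifold} $N$ that is Turing complete in the discrete sense: there is a computable assignment, (input of the machine) $\mapsto$ (point $p_0\in N$), and a fixed open set $U\subset N$, such that the machine halts on that input if and only if the orbit $\{\phi^k(p_0)\}_{k\geq 0}$ eventually meets $U$. The dimension $4$ is chosen precisely so that the smallest odd integer in $\{3\dim N+5,\,3\dim N+6\}=\{17,18\}$ is $17$; applying Corollary~\ref{Cor:difeo} then produces an Eulerisable flow on $\mathbb{S}^{17}$ having an invariant submanifold with a cross-section $\Sigma\cong N$ whose first return map is conjugate to $\phi$.

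\emph{Construction of $\phi$.} By Bennett's theorem we may take the universal machine to be \emph{reversible}, so that its global transition is a bijection of the space of configurations (bi-infinite tape, head position, internal state). Following the generalized-shift formalism of Moore, I would encode a configuration as a point of a Cantor set: split the tape at the head into its left and right halves and encode each half, together with the finite state data, as a real number via a base-$k$ Cantor expansion. The transition then acts as a locally affine homeomorphism of a product of Cantor sets sitting inside $N$, and reversibility guarantees injectivity. The remaining task is to realize this symbolic dynamics as the restriction of a genuine $C^\infty$ diffeomorphism of the closed manifold $N$ that leaves the Cantor set invariant and extends the symbolic map smoothly; here dimension $4$ gives enough room to interpolate the finite-window transition rules by a compactly supported smooth isotopy and to close up to a diffeomorphism of $N$, which can be arranged to be orientation-preserving. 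Setting $p_0$ to be the point encoding the initial configuration and $U$ a neighborhood of the set of halting configurations realizes the halting $\leftrightarrow$ reachability equivalence above.

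\emph{Transfer to the flow.} Let $u$ be the Eulerisable flow on $\mathbb{S}^{17}$ given by Corollary~\ref{Cor:difeo}, with cross-section $\Sigma\cong N$ and first return map conjugate to $\phi$. Let $\widetilde p_0\in\Sigma$ correspond to $p_0$, and let $\widetilde U\subset\mathbb{S}^{17}$ be a small open neighborhood of the image of $U$ inside a flow box transverse to $\Sigma$. Since the return map to $\Sigma$ iterates $\phi$, the trajectory of $u$ through $\widetilde p_0$ enters $\widetilde U$ if and only if some $\phi^k(p_0)\in U$, i.e.\ if and only if the machine halts; moreover this trajectory is automatically bounded, since it stays in the compact invariant submanifold $e(N\times\mathbb{S}^1)$. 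As the machine is universal, the reachability question for $u$ in the sense of Section~\ref{SS5.2} is undecidable, so $u$ is Turing complete and, in particular, has undecidable trajectories.

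\emph{Main obstacle.} The hard part is the middle step: upgrading the purely symbolic (Cantor-set) simulation to a smooth diffeomorphism of a \emph{closed} manifold while (i) keeping the correspondence between halting and reachability exact, (ii) ensuring the extension is a bona fide $C^\infty$ diffeomorphism rather than merely a homeomorphism or a noninvertible map, and (iii) controlling the dimension so that exactly $\mathbb{S}^{17}$ is produced. Points (i)--(ii) are what force the use of a reversible machine and of a locally affine model near the Cantor set, while (iii) is the bookkeeping that fixes $\dim N=4$.
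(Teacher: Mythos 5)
Your overall strategy coincides exactly with the paper's: reduce Theorem~\ref{T.main2} to the Euler-embeddability of diffeomorphisms (Corollary~\ref{Cor:difeo}) applied to a Turing-complete diffeomorphism of a compact $4$-manifold, with $\dim N=4$ chosen so that the smallest odd integer in $\{3\cdot 4+5,\,3\cdot 4+6\}$ is $17$, and then transfer halting/reachability from the discrete orbit of $\phi$ to the trajectory of the suspension flow (your observation that boundedness is automatic because the trajectory lies in a compact invariant submanifold is also how the paper sees it, though note that the invariant submanifold is the mapping torus of $\phi$, which need not be diffeomorphic to $N\times\mathbb S^1$ unless $\phi$ is isotopic to the identity).

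The gap is your middle step. The existence of a smooth, orientation-preserving, Turing-complete diffeomorphism of a compact $4$-manifold is not proved in the paper either: it is quoted from Tao~\cite[Proposition 1.10]{T1}, restated as Proposition~\ref{Prop:Turing}, which supplies an explicitly constructible $\phi:\mathbb T^4\to\mathbb T^4$ together with the explicit points $y_s$ and open sets $U_{t_{-n},\dots,t_n}$ realizing the halting $\leftrightarrow$ reachability equivalence. You instead propose to build $\phi$ from scratch via Bennett reversibility and Moore's generalized shifts, and you yourself flag the decisive step --- upgrading the locally affine Cantor-set dynamics to a genuine $C^\infty$ diffeomorphism of a closed $4$-manifold while keeping the equivalence exact --- as the ``main obstacle'' without resolving it. That step is precisely the content of Tao's proposition (and roughly the route by which he proves it), so as written your argument assumes the hardest part of the proof. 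The gap is easily repaired: replace your construction paragraph by a citation of Proposition~\ref{Prop:Turing}, after which your proof becomes the paper's. One further small point, which applies equally to the paper's write-up: the open set $\widetilde U\subset\mathbb S^{17}$ must be chosen so that its intersection with the invariant submanifold is a short-time flow box over the image of $U$ in the cross-section; for an arbitrary neighborhood of that image the ``only if'' direction of the reachability equivalence is not automatic.
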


The solution of the Euler equations that encodes a universal Turing machine provided by this theorem is stationary. We do not know if it gives rise to a global-time solution when it is considered as the initial condition for the Navier-Stokes equations on $\mathbb S^{17}$ with the corresponding Riemannian metric.

\begin{Remark}
In two sequels to this work \cite{CMPP2,JMPA}, we construct Turing complete steady Euler flows on a Riemannian three-dimensional sphere and in Euclidean $3$-space, respectively. These results are obtained by combining different techniques from contact geometry, symbolic dynamics and partial differential equations. Even taking into account that these constructions produce lower dimensional examples, the method of proof of Theorem \ref{T.main2} gives a machinery to construct Turing complete Eulerizable flows from initial Turing complete data in lower dimensions. This is done by converting the initial data into geodesible vector fields and using the fact that the  $h$-principle in the construction of Theorem \ref{main1} is algorithmic. In the proof of Theorem \ref{T.main2} the initial datum is a Turing complete diffeomorphism of the $4$-dimensional torus (see proposition \ref{Prop:Turing}) but it could be replaced by other Turing complete \emph{black boxes}.
\end{Remark}

\begin{Remark}
In this article we study the dynamics (integral curves) of the velocity field of a stationary solution of the Euler equations, which physically can be understood as the labels-to-particles map from Lagrangian coordinates to Eulerian coordinates. This map certainly has relevance in understanding the dynamics of the (time-dependent) Euler equations; for instance, the Lyapunov exponents of a steady Euler flow on $\mathbb T^3$ (flat metric) are related to the linear instability of these flows~\cite{FV93}. Since generically a Reeb field exhibits hyperbolic periodic orbits, one expects that the steady solutions we construct are linearly unstable for the evolution of the Euler equations in Sobolev spaces (a proof, however, would involve an extension of the theory in~\cite{FV93} to arbitrary Riemannian manifolds, which is not available). { The universality properties of the time-dependent Euler equations are investigated by Tao \cite{T2, T3}, and more recently by Torres de Lizaur \cite{TdL}.} In a subsequent work~\cite{CMP2} we answered the initial question posed by Tao~\cite{T2} by constructing time-dependent Turing complete solutions to the Euler equations in a manifold of high dimension.
\end{Remark}

The paper is organized as follows. In Section~\ref{S2} we review some classical results on contact geometry and $h$-principle, Beltrami flows and geodesible fields that will be instrumental in the next sections. In Section~\ref{S3} we study the extendibility properties of the geodesible fields to Reeb fields for some contact manifold and state several Reeb embedding theorems that will be used in the proof of the theorems above. For the benefit of the reader, the proof of the most technically demanding Reeb embedding theorem, which gives an ``almost optimal'' dimension for the ambient manifold, is postponed to Section~\ref{S4}. In Section~\ref{S5} we apply the previous results to the Euler equations to prove Theorems~\ref{T.main1} and~\ref{T.main2}, and Corollary~\ref{Cor:difeo}. Combining the results in Section~\ref{S3} with a number of $h$-principles for embeddings into contact manifolds~\cite{hprinc,BEM}, in Section~\ref{S4} we establish a fairly general $h$-principle for iso-Reeb embeddings. Finally, in Section~\ref{S:end} we provide some examples and generalizations of the iso-Reeb embedding theorems proved in Section~\ref{S3}, in particular depicting the space of iso-Reeb embeddings. Unless otherwise stated, all the manifolds and submanifolds in this paper are orientable, connected and have no boundary.

\section{Preliminaries}\label{S2}

In this section we review some concepts and results that will be instrumental in the forthcoming sections. In Subsection~\ref{SS.cont}, we recall the definition of a contact manifold and state some classical flexibility theorems for isocontact embeddings. The definition of a Beltrami field and its connection with the Reeb flows of a contact form is presented in Subsection~\ref{SS.Belt}. Finally, in Subsection~\ref{SS.geod} some basic facts about geodesible vector fields are introduced.

\subsection{Contact geometry and $h$-principle for isocontact embeddings}\label{SS.cont}


Let $M^{2m+1}$ be an odd dimensional manifold equipped with a hyperplane distribution $\xi$. Assume that there is a $1$-form $\alpha \in \Omega^1(M)$ with $\ker \alpha = \xi$ and $\alpha \wedge (d\alpha)^m >0$ everywhere. Then we say that $(M^{2m+1},\xi)$ is a (cooriented) contact manifold.

The $1$-form $\alpha$ is called a contact form. Of course, the contact structure $\xi$ does not depend on the choice of the defining 1-form $\alpha$. It is well known that $d\alpha$ induces a symplectic structure on the hyperplane distribution $\xi$ (of even dimension $2m$). The unique Reeb vector field $R$ associated to a given contact form $\alpha$ is defined by the equations
\begin{equation} \label{eq:Reeb}
\iota_R\alpha=1\,, \qquad \iota_Rd\alpha =0\,.
\end{equation}


The world of contact geometry exhibits a lot of flexibility which usually enables to use arguments from differential topology to prove geometric properties. The pioneering work of Gromov~\cite{G} shows that there exists a parametric $h$-principle for contact structures on open manifolds. For general manifolds, a parametric and relative $h$-principle was proved in~\cite{BEM} using overtwisted disks, see also~\cite{El} and~\cite{CPP} for previous results.

Grosso modo, the general philosophy of the $h$-principle leans on the idea of deforming \emph{formal} solutions into \emph{honest solutions} of an equation (PDE or, more generally, a partial differential relation). When this is possible, finding a solution is simplified to a homotopic-theoretical problem. A reincarnation of this principle in the contact set-up requires a fine inspection of the notion of formal contact structure. Specifically, the topological information given by the contact distribution consists of the codimension one distribution $\xi$ and the symplectic structure on it induced by $d\alpha$. In fact, only the conformal class is determined because a rescaling $\alpha'=f\alpha$ is a contact form for the same contact structure. This allows one to introduce the concept of a formal contact structure that is defined as a cooriented hyperplane distribution and a conformally symplectic class on it. In the literature this structure has been usually called almost contact structure, however in the last few years the term formal has become standard since it implements the formal condition for the $h$-principle. We can find a $2$-form $\omega$ on $M$ such that $(\xi,\omega|_\xi)$ is a conformal symplectic vector bundle: we say that two formal contact structures defined as $(\xi , \omega|_\xi)$ and $(\xi , \hat\omega|_\xi)$ are equivalent if $\omega$ and $\hat\omega$ are conformally equivalent. So a formal contact structure is described by a codimension one conformal symplectic vector bundle.

The flexibility statements that we need in this paper concern isocontact embeddings. Recall that a map $f: (N,\xi_N)\rightarrow (M,\xi_M)$ between contact manifolds is called isocontact if $f_*\xi_N=\xi_M$. In the formal level, a monomorphism $F:TN\rightarrow TM$ is called isocontact if $\xi_N= F^{-1}(\xi_M)$ and $F$ induces a conformally symplectic map with respect to the conformal symplectic structures $CS(\xi_N)$ and $CS(\xi_M)$. The following $h$-principle was proved in~\cite[Section 12.3.1]{hprinc}. We recall that $N_0$ is called a core of an open manifold $N$ if for an arbitrarily small neighborhood $U$ of $N_0$, there is an isotopy which sends diffeomorphically $U$ to $N$.

\begin{theorem}[\cite{G}]\label{isoc}
Let $(N,\xi_N)$ and $(M,\xi_M)$ be contact manifolds of dimension $2n+1$ and $2m+1$ respectively. Let $f_0=:(N,\xi_N) \rightarrow (M,\xi_M)$ be an embedding such that its differential $F_0:=df_0$ is homotopic (via monomorphisms $F_t: TN \rightarrow TM$, with projections onto the base given by $f_0$) to a conformal symplectic monomorphism $F_1$. Then
\begin{itemize}
\item If $N$ is open and $n\leq m-1$ then there is an isotopy $f_t: N\rightarrow M$ such that the embedding $f_1$ is isocontact and $df_1$ is homotopic to $F_1$ through conformal symplectic monomorphisms. Given a core $N_0$ of $N$, $f_t$ can be chosen arbitrarily $C^0$-close to $f_0$ near $N_0$.
\item If $N$ is closed and $n \leq m-2$ then the above $f_t$ exists. Moreover, one can choose $f_t$ to be arbitrarily $C^0$-close to $f_0$.
\end{itemize}
\end{theorem}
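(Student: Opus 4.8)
The plan is to recast the isocontact condition as an open differential relation and to run Gromov's $h$-principle for microflexible directed embeddings in positive codimension, following the scheme of \cite[Section 12.3.1]{hprinc}. First I would set up the relation: fixing contact forms $\alpha_N,\alpha_M$ for $\xi_N,\xi_M$, let $\mathcal R\subset J^1(N,M)$ be the set of $1$-jets $(x,y,F)$ with $F\colon T_xN\to T_yM$ a monomorphism such that $F^{-1}(\xi_M)=\xi_N$ and $F|_{\xi_N}$ is conformally symplectic for $CS(\xi_N),CS(\xi_M)$. Holonomic sections of $\mathcal R$ are exactly the $1$-jets of isocontact embeddings, while formal solutions over $f_0$ are sections covering $f_0$ with values in $\mathcal R$. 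The hypothesis that $F_0=df_0$ is homotopic through monomorphisms $F_t$ (covering $f_0$) to a conformal symplectic monomorphism $F_1$ provides precisely such a formal solution, so the theorem amounts to the $h$-principle: every formal isocontact embedding is homotopic to a genuine one.

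Next I would verify the two formal properties that feed the directed-embedding machine. Openness of $\mathcal R$ is immediate, since being a monomorphism, the transversality condition $F^{-1}(\xi_M)=\xi_N$, and conformal nondegeneracy of $F|_{\xi_N}$ are all open conditions on the $1$-jet. Invariance: $\mathcal R$ is preserved by the pseudogroup of contactomorphisms of $(M,\xi_M)$ acting on the target, as well as by the contactomorphisms of $(N,\xi_N)$ on the source. This invariance under the contactomorphism group, rather than all of $\mathrm{Diff}(M)$, is what allows the directed-embedding theorem to apply: the target is sharply flexible because contact transformations already act transitively enough on isocontact framings.

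The crux, and the step I expect to be the main obstacle, is microflexibility of $\mathcal R$. Microflexibility asks that a compactly supported family of holonomic solutions, given over a neighborhood of a compact set and extended only formally over a larger parameter polyhedron, can be prolonged as a holonomic family for a short parameter-time. For the isocontact relation this reduces to a local integration statement: an isocontact embedding of a piece of $N$ can be extended isocontactly a little further. I would prove this by a Moser/Gray-stability argument: on a tubular neighborhood of the image normalize $\alpha_M$, and use that two nearby contact forms inducing the same conformal symplectic data are related by a contact isotopy in order to absorb the small discrepancy introduced by the formal extension. The positive-codimension hypothesis is exactly what supplies the transverse room to realize these prolongations as embeddings without self-intersections and to kill the pointwise obstructions; concretely $n\leq m-1$ gives codimension $2(m-n)\geq 2$.

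Finally I would invoke Gromov's $h$-principle for open, microflexible relations invariant under a sufficiently rich pseudogroup: such $\mathcal R$-directed embeddings satisfy the full parametric, relative, and $C^0$-dense $h$-principle in positive codimension. This converts the formal solution $F_t$ into an isotopy $f_t$ with $f_1$ isocontact and $df_1$ homotopic to $F_1$ through conformal symplectic monomorphisms, as claimed. The open/closed dichotomy is then a matter of codimension accounting: when $N$ is open one first compresses everything into a small neighborhood of a core $N_0$ using the Holonomic Approximation Theorem together with the open-manifold flexibility of \cite{G}, which costs only codimension $2$ (the bound $n\leq m-1$) and yields $C^0$-closeness near $N_0$; when $N$ is closed there is no core to retract onto, so one runs the microflexible directed-embedding argument globally, which requires one extra unit of codimension, $n\leq m-2$ (codimension $\geq 4$), and then delivers global $C^0$-closeness.
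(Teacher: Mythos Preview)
The paper does not prove Theorem~\ref{isoc}: it is quoted verbatim as a known $h$-principle, with the citation ``proved in~\cite[Section 12.3.1]{hprinc}'', and is used as a black box in the proofs of Theorems~\ref{main1}, \ref{hp1} and~\ref{hp2}. So there is no ``paper's own proof'' to compare your proposal against.

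That said, your sketch is essentially the argument of Eliashberg--Mishachev that the paper is citing: cast isocontact embeddings as holonomic sections of an open relation $\mathcal R\subset J^1(N,M)$, observe that $\mathcal R$ is invariant under the pseudogroup of local contactomorphisms of the target, establish microflexibility of $\mathcal R$ via contact stability (Gray/Darboux), and then apply Gromov's theory of microflexible $\mathrm{Diff}$-invariant relations. A couple of points in your write-up are loose. First, microflexibility here is not really a ``local integration statement'' about extending an isocontact embedding a little further; it is the homotopy-lifting property for the restriction map of holonomic solutions, and in this case it follows almost immediately from the contact stability theorem (two nearby contact structures are isotopic), not from a Moser argument on $\alpha_M$ in a tubular neighborhood. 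Second, your explanation of the open/closed dichotomy is not quite the textbook reason. For open $N$ the microflexible $h$-principle applies directly because open manifolds retract onto a positive-codimension core, and this is where the $C^0$-closeness near $N_0$ comes from; the bound $n\leq m-1$ is what ensures the relation is ample enough in the normal directions. For closed $N$ one cannot run the open-manifold argument globally; the extra two units of codimension ($n\leq m-2$) are used to first put the embedding in general position so that a neighborhood of a top cell can be handled separately, reducing to the open case on the complement. Your phrase ``requires one extra unit of codimension'' is informally right in spirit but the mechanism is this reduction-to-open step, not a direct global run of the directed-embedding machine.
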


In~\cite{BEM}, the authors showed that every formal contact structure is deformable to a genuine contact structure, thus proving the long standing conjecture of the existence of contact structures in every formal contact manifold. Restricting to a particular class of contact structures called \emph{overtwisted}, a full $h$-principle was proved, thus implying a result stronger than Theorem~\ref{isoc} for isocontact embeddings into overtwisted manifolds. This result, which holds for codimension $0$ isocontact embeddings of open manifolds, can be summarized as follows:

\begin{theorem}[\cite{BEM}]\label{CorBEM}
Let $(M^{2m+1}, \xi)$ be a connected overtwisted contact manifold and $(N^{2m+1}, \zeta)$ an open contact manifold of the same dimension. Let $f: N \rightarrow M$ be a smooth embedding covered by an isocontact bundle homomorphism $\varphi: TN \rightarrow TM$, that is such that $\varphi(\zeta_x)=\xi_{f(x)}$ and $\varphi$ preserves the conformal symplectic structures on the distributions. If $df$ and $\varphi$ are homotopic as injective bundle homomorphisms then $f$ is isotopic to an isocontact embedding $\tilde f$.
\end{theorem}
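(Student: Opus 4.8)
The plan is to deduce Theorem~\ref{CorBEM} from the full parametric and relative $h$-principle for overtwisted contact structures proved in~\cite{BEM}, which states that on a manifold admitting an overtwisted contact structure the inclusion of genuine contact structures into formal ones (relative to a closed set where the structure is already genuine) is a weak homotopy equivalence. The starting point is the observation that, since $f$ has codimension zero, requiring $f$ to be isocontact is equivalent to requiring $f^*\xi'=\zeta$ for a contact structure $\xi'$ on $M$ that is isotopic to $\xi$: once such a $\xi'$ is produced, Gray stability together with the isotopy extension theorem turns the path from $\xi$ to $\xi'$ into an ambient isotopy $\Phi_s$ of $M$, and $\tilde f:=\Phi_1^{-1}\circ f$ is then isocontact for $\xi$ and isotopic to $f$. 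Thus the problem of \emph{deforming the embedding} is converted into the problem of \emph{deforming the contact structure} on the fixed image so that it pulls back to $\zeta$, which is precisely the kind of statement that the $h$-principle of~\cite{BEM} is designed to settle.

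To produce $\xi'$ I would first transport the formal data to the target. Write $W:=f(N)$, an open subset of $M$, and note that the pushforward $f_*\zeta$ is a genuine contact structure on $W$. The homotopy $F_t$ from $df$ to $\varphi$ through injective bundle homomorphisms (covering $f$) interpolates over $W$ between the conformal symplectic structure that $\xi$ carries and the one carried by $f_*\zeta$, with matching coorientations. Gluing this interpolation with the constant homotopy away from $\overline W$ yields a formal contact structure $\eta$ on $M$ that equals the genuine structure $f_*\zeta$ on $W$, equals the genuine structure $\xi$ outside a neighborhood of $\overline W$, and is formally homotopic to $\xi$. Because $M$ is overtwisted and $\eta$ already coincides with a genuine overtwisted structure away from $\overline W$, the relative part of the overtwisted $h$-principle straightens $\eta$, rel the region where it equals $f_*\zeta$, to a genuine contact structure $\xi'$ lying in the same formal class as $\xi$ and satisfying $f^*\xi'=\zeta$. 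The parametric (uniqueness) part then joins $\xi'$ to $\xi$ through genuine overtwisted contact structures, completing the reduction described above.

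I expect the genuine difficulty to be entirely concentrated in the non-compactness arising from $N$, and hence $W$, being open. Gray stability and the isotopy extension theorem require the path of contact structures to be constant outside a compact set, and the relative $h$-principle of~\cite{BEM} needs an overtwisted disk to be available \emph{in the region where $\eta$ is actually being deformed}, rather than buried in the fixed part where $\eta$ already equals $\xi$. This is exactly where the hypotheses that $N$ is open and that the embedding may be taken $C^0$-close come into play: using a core of $N$ one reduces to making $f$ isocontact on an arbitrarily small neighborhood that can subsequently be spread over all of $N$, and the $C^0$-smallness lets one confine the image to a subset whose complement still contains an overtwisted disk. Managing these supports so that the existence statement of~\cite{BEM} upgrades to the genuine isotopy asserted in Theorem~\ref{CorBEM} is the crux of the argument, and it is precisely the passage from a formal homotopy $df\simeq\varphi$ to an honest isotopy that the full (as opposed to merely existential) overtwisted $h$-principle supplies.
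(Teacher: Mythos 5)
A preliminary remark on the comparison: this paper does not prove Theorem~\ref{CorBEM} at all --- it is stated as a summary of a corollary of the classification theorem of~\cite{BEM} (hence the label), so your proposal can only be judged on its own merits as a derivation from the main $h$-principle of~\cite{BEM}. Your overall strategy --- turn the homotopy $df\simeq\varphi$ into a homotopy of formal contact structures, use the relative/parametric overtwisted $h$-principle to produce a genuine contact structure $\xi'$ with $f^*\xi'=\zeta$, and then convert the genuine homotopy from $\xi$ to $\xi'$ into an ambient isotopy via Gray stability --- is the natural and, in outline, correct route.

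However, the central construction as written has a genuine gap. The interpolation obtained from $F_t$ lives only over the open set $W=f(N)$, and there is no reason for it to be constant, or even to extend continuously, near the frontier $\overline{W}\setminus W$; worse, $\overline{W}$ can be all of $M$ (take $M$ closed, $N=M\setminus\{p\}$ and $f$ the inclusion), in which case ``the constant homotopy away from $\overline{W}$'' is a homotopy over the empty set. So the glued formal structure $\eta$ does not exist as described, and the subsequent appeal to the relative $h$-principle ``rel the region where $\eta$ equals $f_*\zeta$'' is also illegitimate: the relative statements in~\cite{BEM} are relative to a \emph{closed} subset on a neighborhood of which the structure is genuine, whereas $W$ is open. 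The only available repair is to cut the homotopy off, but then $\eta=f_*\zeta$ only on a compact subset $A\subset W$, so the $h$-principle yields $f^*\xi'=\zeta$ only over $f^{-1}(A)$ rather than over all of $N$. Your proposed remedy --- compressing $N$ into a neighborhood of a core --- does not by itself close this gap: if $g_1:N\rightarrow N$ is the compressing isotopy, then $\bigl(\Phi_1^{-1}\circ f\circ g_1\bigr)^*\xi=g_1^*\bigl(f^*\xi'\bigr)=g_1^*\zeta$, which differs from $\zeta$ in general, since a smooth compression is not a contactomorphism onto its image. One must additionally transport the formal isocontact data along the compression, replacing $(f,\varphi)$ by $(f\circ g_1,\varphi\circ\Psi)$ where $\Psi$ is a bundle lift of $g_1$ covering $g_1$, preserving $\zeta$ together with its conformal symplectic structure, and homotopic to $dg_1$ (built, e.g., by parallel transport along $g_t$ for a connection adapted to a splitting $TN=\zeta\oplus\mathbb{R}$); and Gray stability on a non-compact $M$ needs the homotopy of contact structures to be suitably supported. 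These are precisely the points you defer to as ``the crux'': the proposal correctly locates the difficulty, but the step it presents as completed (the gluing) would fail, and the argument that would replace it is not supplied.
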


\subsection{The correspondence between Beltrami fields and Reeb flows}\label{SS.Belt}

The stationary Euler equations~\eqref{euler_stat} can be written equivalently using differential forms as
\begin{equation}
\iota_ud\alpha=-dB\,, \qquad d\iota_u \mu=0\,,
\end{equation}
where $\alpha:=\iota_ug$ is the metric dual $1$-form of $u$ and $\mu$ is the Riemannian volume form. The function $B$ is called the Bernoulli function, and is defined as $B:=p+\frac12g(u,u)$. When the dimension of $M$ is odd, i.e. $\text{dim }M=2m+1$, one can introduce the concept of vorticity, which is a vector field that plays a fundamental role in fluid mechanics. It is defined as the curl of $u$, i.e. $\omega:=\curl u$, where the curl of a vector field in odd dimensions is computed as the only field that satisfies the equation
$$\iota_\omega \mu= (d\alpha)^m\,.$$
Notice that when $m>1$, the curl is a nonlinear differential operator.

A landmark in the study of the steady Euler flows in odd dimensional manifolds is Arnold's structure theorem~\cite{Ar65,AK}, which is based on the following dichotomy: a stationary solution either has a nontrivial first integral (the Bernoulli function) or it is a \emph{Beltrami field}. We recall that a Beltrami field $u$ is a vector field that satisfies
\begin{equation}
\curl u=fu\,, \qquad \Div u=0\,,
\end{equation}
for some smooth function $f$. These fields are stationary solutions of the Euler equations with constant Bernoulli function. { Even if in this case the Bernoulli function is a trivial first integral, a Beltrami field might as well have another first integral, i.e. both situations are not mutually exclusive.}

When the function $f$ does not vanish, say $f>0$, there is a remarkable correspondence between nonvanishing Beltrami fields and Reeb flows for a certain contact structure suggested by  Sullivan and proved by Etnyre and Ghrist in~\cite{EG} (see also~\cite{CMP0} for an extension to Beltrami fields on manifold with cylindrical ends and $b$-contact structures). This result paves the way to study the stationary Euler equations using contact geometry techniques. The proof presented in~\cite{EG} is in dimension $3$, but it can be readily extended to any higher odd dimension. We include it here for the sake of completeness.

\begin{theorem}\label{correspondence}
Any nonvanishing Beltrami field with positive proportionality factor is a reparametrization of a Reeb flow for some contact form. Conversely, any reparametrization of a Reeb vector field of a contact structure is a nonvanishing Beltrami field for some Riemannian metric.
\end{theorem}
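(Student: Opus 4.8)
The plan is to extend the three–dimensional argument of Etnyre–Ghrist to an arbitrary odd dimension $2m+1$ by translating both the Beltrami condition and the Reeb condition into statements about the dual $1$–form $\alpha:=\iota_u g$ and the Riemannian volume $\mu$, and to treat the two implications separately.

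For the forward implication, let $u$ be a nonvanishing Beltrami field with $\curl u=fu$, $f>0$, and set $\alpha:=\iota_u g$. First I would check that $\alpha$ is a contact form. By definition of the curl and the Beltrami hypothesis, $(d\alpha)^m=\iota_{\curl u}\mu=f\,\iota_u\mu$, while contracting the vanishing top-degree form $\alpha\wedge\mu$ with $u$ gives $\alpha\wedge\iota_u\mu=(\iota_u\alpha)\,\mu=g(u,u)\,\mu$. Hence $\alpha\wedge(d\alpha)^m=f\,\alpha\wedge\iota_u\mu=f\,g(u,u)\,\mu>0$ everywhere, so $\alpha$ is contact; let $R$ denote its Reeb field. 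It remains to identify $u$ with a reparametrization of $R$. Contracting $(d\alpha)^m=f\,\iota_u\mu$ with $u$ yields $\iota_u(d\alpha)^m=f\,\iota_u\iota_u\mu=0$.

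The crux of this direction is the linear-algebraic fact that $\iota_u(d\alpha)^m=0$ forces $u$ to lie in the one–dimensional kernel of $d\alpha$. I would argue this by splitting $TM=\langle R\rangle\oplus\xi$ with $\xi=\ker\alpha$: since $\iota_R d\alpha=0$, the form $(d\alpha)^m$ annihilates $R$ and restricts to a nonzero multiple of the volume form of $\xi$, so writing $u=aR+u_\xi$ one gets $\iota_u(d\alpha)^m=\iota_{u_\xi}(d\alpha)^m$, which vanishes only if $u_\xi=0$. Pairing $\alpha$ with $u=aR$ gives $a=\iota_u\alpha=g(u,u)>0$, hence $u=g(u,u)\,R$ is a positive reparametrization of the Reeb flow, as claimed.

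For the converse, given a positive reparametrization $u=\phi R$ of the Reeb field of a contact form $\alpha$, I would build the metric by hand. Choose an almost complex structure $J$ on $\xi=\ker\alpha$ compatible with $d\alpha|_\xi$, so that $g_\xi:=d\alpha(\cdot,J\cdot)$ has volume $\tfrac{1}{m!}(d\alpha)^m|_\xi$, and impose $R\perp\xi$. The naive metric $\alpha\otimes\alpha+g_\xi$ makes $R$ itself Beltrami with constant factor, but a plain reparametrization of it need not be divergence–free, and this is the main obstacle. The remedy is to scale both blocks, setting
\[
g \;=\; \tfrac{1}{\phi}\,\alpha\otimes\alpha \;+\; \phi^{-1/(2m)}\,g_\xi .
\]
A direct computation then gives $\iota_u g=\alpha$ and $\mu=\tfrac{1}{m!}\,\phi^{-1}\,\alpha\wedge(d\alpha)^m$, so that $\iota_u\mu=\tfrac{1}{m!}(d\alpha)^m$; the right-hand side is closed, whence $\Div u=0$. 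Finally $\curl u$ is read off from $\iota_{\curl u}\mu=(d(\iota_u g))^m=(d\alpha)^m=m!\,\iota_u\mu$, giving $\curl u=m!\,u$, a nonvanishing Beltrami field with positive (indeed constant) proportionality factor. The delicate point is the bookkeeping of the two conformal weights $\phi^{-1}$ and $\phi^{-1/(2m)}$: they are forced precisely so that the contraction $\iota_u\mu$ equals the closed form $\tfrac{1}{m!}(d\alpha)^m$, which is exactly what simultaneously secures both the divergence–free and the curl conditions.
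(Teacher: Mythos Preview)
Your proof is correct and follows essentially the same route as the paper's. The forward direction is identical in spirit—showing $\alpha\wedge(d\alpha)^m>0$ and then $\iota_u(d\alpha)^m=0$ to force $u\parallel R$—with the only difference that you spell out the linear-algebraic step (splitting $u=aR+u_\xi$) which the paper leaves implicit. For the converse, the paper also builds the metric as $h\,\alpha\otimes\alpha+\tilde h\,d\alpha(\cdot,J\cdot)$ and simply asserts that $\tilde h$ can be chosen to make $\mu=h\,\alpha\wedge(d\alpha)^m$; you go further and write down the explicit weight $\tilde h=\phi^{-1/(2m)}$, which yields $\curl u=m!\,u$ rather than the paper's normalization $\curl u=u$, but this is immaterial.
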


\begin{proof}
Let $u$ be a Beltrami field and $\alpha$ its metric-dual 1-form. As $\dim M=2m+1$, the Beltrami equation reads as
$$ (d\alpha)^m = f \iota_u \mu\,. $$
Since $f>0$ and $u$ does not vanish, it then follows that $\alpha$ satisfies the contact condition
$$ \alpha \wedge (d\alpha)^m= f \alpha \wedge \iota_u \mu > 0.$$
Moreover, $\iota_u (d\alpha)^m=0$ so $u \in \ker d\alpha$. Accordingly, $u$ is a reparametrization of the Reeb field $R$, that is $R=\frac{u}{g(u,u)}$.

To prove the converse implication, consider a contact form $\alpha$ and its associated Reeb flow $R$. Let $u:=R/h$ where $h>0$ is a reparametrization of $R$, and take an almost-complex structure $J$ on $\ker \alpha= \xi$ adapted to $d\alpha$, i.e. $d\alpha(\cdot, J\cdot)$ is a positive definite quadratic form on $\xi$. Define the metric
\begin{equation}\label{met}
g(u,v) := h(\alpha(u)\otimes \alpha(v)) + \tilde h \,d\alpha(u,Jv)\,,
\end{equation}
where $\tilde h$ is any positive function. It then follows that $\iota_u g= \alpha$. It is clear that the function $\tilde h$ can be chosen so that the Riemannian volume form is $\mu= h\alpha\wedge (d\alpha)^m$. Therefore, $u$ is a Beltrami field (with factor $f=1$) with respect to the metric $g$ because $\iota_u \mu= (d\alpha)^m$ and $d\iota_u \mu =0$.
\end{proof}
\begin{Remark}
The (non-unique) Riemannian metric~\eqref{met} is called an \emph{adapted metric} to the contact form $\alpha$.
\end{Remark}

\subsection{Geodesible vector fields}\label{SS.geod}

We say that a vector field $X$ on a manifold $M$ is geodesible if there exists a Riemannian metric $g$ on $M$ making the orbits geodesics. It can be shown~\cite{Gl,S} that the geodesibility condition is equivalent to the existence of a $1$-form $\beta$ such that $\beta(X)>0$ and $\iota_Xd\beta=0$. If we also assume that the $1$-form can be taken so that $\beta(X)=1$, we say that $X$ is of unit length. Unless otherwise stated, all along this paper we shall assume that a geodesible field has unit length. A similar characterization for Eulerisable flows was introduced in~\cite{P1}.

Another characterization that we shall use later is that $X$ is geodesible of unit length if and only if it preserves a transverse hyperplane distribution. The necessity is immediate from the aforementioned result. To prove that it is sufficient, let $\eta$ be the hyperplane distribution and $\beta$ a defining $1$-form such that $\ker \beta=\eta$ and $\beta(X)>0$. Dividing $\beta$ by the function $\beta(X)$ we can safely assume that $\beta(X)=1$. The condition that $X$ preserves $\ker \beta$ is tantamount to saying that
$$\mathcal{L}_X\beta = f\beta\,,$$
for some function $f\in C^\infty (M)$. Cartan's formula implies that $\iota_Xd\beta=f\beta$, and contracting with the vector field $X$ we finally conclude that $f=0$.

In the next sections we shall usually denote a geodesible field by $(N,X)$ or $(N,X,\eta)$, where $N$ is the ambient manifold, $X$ is the field and $\eta$ is the transverse hyperplane distribution preserved by~$X$. In particular we might fix the $1$-form $\beta$, and hence the hyperplane distribution $\eta=\ker \beta$ preserved by $X$.

\section{Reeb-embeddability and geodesible fields}\label{S3}

The goal of this section is to prove the following theorem:

\begin{theorem} \label{main1}
Let $(N,X)$ be a compact manifold endowed with a geodesible field $X$. Then there is a smooth embedding $e:N\rightarrow \mathbb S^{n}$ with $n=4\,\text{dim }N-1$ and a $1$-form $\alpha$ defining the standard contact structure $\xi_{std}$ on $\mathbb S^{n}$ such that $e(N)$ is an invariant submanifold of the Reeb field $R$ defined by $\alpha$ and $e_*X=R$. Moreover, $\alpha$ is equal to the standard contact form $\alpha_{std}$ in the complement of a ball that contains $e(N)$.
\end{theorem}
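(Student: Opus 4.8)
The plan is to reduce the statement to the isocontact embedding $h$-principle of Theorem~\ref{isoc}, after replacing $N$ by a suitable odd-dimensional contact \emph{thickening} on which $X$ becomes a genuine Reeb field. By the characterization of geodesibility recalled in Subsection~\ref{SS.geod}, I fix a $1$-form $\beta$ on $N$ with $\beta(X)=1$ and $\mathcal{L}_X\beta=0$, so that $\eta=\ker\beta$ is the transverse hyperplane distribution preserved by $X$. Writing $d=\dim N$, I consider the total space $W$ of the rank-$(d-1)$ bundle $\eta^*\to N$ and, using the splitting $TN=\mathbb{R}X\oplus\eta$, define on $W$ the $1$-form $\lambda=\pi^*\beta+\tau$, where $\pi:W\to N$ is the projection and $\tau$ is the tautological $1$-form of $\eta^*$. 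A direct computation near the zero section $N_0$ gives $\iota_X\lambda=1$ and $\iota_Xd\lambda=0$, while $d\lambda$ restricted to $\ker\lambda=\eta\oplus\eta^*$ is nondegenerate, since the pairing induced by $d\tau$ between base directions $\eta$ and fiber directions $\eta^*$ is nondegenerate and dominates $d\beta|_\eta$ near $N_0$. Hence $(W,\ker\lambda)$ is an open contact manifold of dimension $2d-1$ whose Reeb field along the core $N_0$ is exactly $X$.

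Next I produce an isocontact embedding of $W$ into $(\mathbb{S}^{n},\xi_{std})$ with $n=4d-1=2m+1$, $m=2d-1$. Since $\dim W=2d-1\le n$, a generic Whitney-type embedding $f_0:W\to\mathbb{S}^{n}$ exists, so the content is to upgrade it to an isocontact one. For this I must exhibit a conformal symplectic monomorphism $F_1:TW\to T\mathbb{S}^n$ covering $f_0$, mapping $\ker\lambda$ to $\xi_{std}$ compatibly with the conformal symplectic structures, together with a homotopy of monomorphisms from $df_0$ to $F_1$. The symplectic bundle $\xi_{std}$ has complex rank $m=2d-1$, so a monomorphism from the rank-$(d-1)$ bundle $\ker\lambda$ into the pullback of $\xi_{std}$ exists and is unique up to homotopy because we sit in a stable range, and the same dimension surplus kills the obstructions to homotoping $df_0$ into $F_1$ through monomorphisms. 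Granting this formal data, the open case of Theorem~\ref{isoc} (applicable since $n_0:=d-1\le m-1=2d-2$) yields an isotopy $f_t$ with $f_1$ isocontact and $C^0$-close to $f_0$ near the core $N_0$; in particular $e(N):=f_1(N_0)$ can be taken inside a ball $B\subsetneq\mathbb{S}^n$.

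I then set $e:=f_1|_{N_0}:N\to\mathbb{S}^n$. Because $f_1$ is isocontact, $(f_1)_*\ker\lambda=\xi_{std}$, and since $X$ is the Reeb field of $\lambda$ along $N_0$, the pushforward $e_*X$ is tangent to $e(N)$ and transverse to $\xi_{std}$ along it. It remains to select a contact form $\alpha$ for $\xi_{std}$ whose Reeb field agrees with $e_*X$ on $e(N)$: working within the positive-rescaling torsor $\alpha=g\,\alpha_{std}$ of contact forms for $\xi_{std}$, the pointwise conditions $\alpha(e_*X)=1$ and $\iota_{e_*X}d\alpha=0$ along $e(N)$ amount to prescribing the value and the transverse $1$-jet of $g$ on $e(N)$, which is solvable precisely because $e_*X$ is transverse to $\xi_{std}$. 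Then $e(N)$ is invariant (being tangent to $R_\alpha=e_*X$) and $e_*X=R$, so $e$ is the desired Reeb embedding. Since every modification is supported in a tubular neighborhood of $e(N)$ inside $B$, interpolating $g$ back to $1$ outside $B$ (which remains within contact forms for $\xi_{std}$) ensures $\alpha=\alpha_{std}$ off $B$, as required.

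The main obstacle is the formal step of the second paragraph, namely building the conformal symplectic monomorphism $F_1$ and the homotopy $df_0\simeq F_1$ through monomorphisms. Everything else is either the explicit thickening of the first paragraph, a direct invocation of Theorem~\ref{isoc}, or the elementary prescription of the contact form along $e(N)$. Controlling these homotopy-theoretic obstructions is exactly what forces the ambient dimension up to $n=4\dim N-1$: careful bookkeeping of the bundle data over the $(2d-1)$-dimensional source, together with the rank $m=2d-1$ of $\xi_{std}$, is what places us in the stable range where the monomorphism spaces are sufficiently connected for the $h$-principle input to apply.
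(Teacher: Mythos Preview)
Your proposal is correct and follows essentially the same three-step strategy as the paper: thicken $(N,X)$ to an open contact $(2d-1)$-manifold via the bundle $\eta^*$ (you write down the explicit form $\lambda=\pi^*\beta+\tau$; the paper packages the same construction as Lemma~\ref{tech}), isocontact-embed into $(\mathbb{S}^{4d-1},\xi_{std})$ via the open case of Theorem~\ref{isoc} (the formal hypothesis you sketch is exactly the content of the paper's Lemma~\ref{lem:sph}), and then rescale the ambient contact form so that its Reeb field restricts to $e_*X$ (the paper's Lemma~\ref{L.Inaba}). Two small imprecisions worth fixing: $\ker\lambda$ has \emph{real} rank $2d-2$ (complex rank $d-1$), not $d-1$; and in the last step, solvability of $\iota_{e_*X}d\alpha=0$ along $e(N)$ requires not merely that $e_*X$ be transverse to $\xi_{std}$ but also that $e_*X$ preserve $T(e(N))\cap\xi_{std}=e_*\eta$ --- this does hold here because $f_1$ is isocontact and $X$ preserves $\eta$ by geodesibility, but your justification elides it.
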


To prove this result, we first recall (Subsection~\ref{Inaba}) Inaba's characterization of the vector fields on a submanifold of a contact manifold $(M,\xi)$ that can be extended as Reeb flows for some contact form defining the contact structure $\xi$. In Subsection~\ref{S:RG} we introduce the concept of \emph{Reeb embedding} and prove Theorem~\ref{main1} using an $h$-principle for isocontact embeddings. Finally, in Subsection~\ref{SS.another} we state a stronger Reeb embedding result (Theorem~\ref{main2}) which substantially improves the dimension $n$ in Theorem~\ref{main1} and shows that, roughly speaking, any embedding of high enough codimension can be deformed into a Reeb embedding. The proof of this result is more involved and will be postponed to Section~\ref{S4}.

We shall see in Section~\ref{S5} how these results can be used, in combination with the correspondence theorem in Subsection~\ref{SS.Belt}, to prove the universality results stated in Section~\ref{S1}. As an immediate corollary we obtain:

\begin{Corollary}\label{Beltr}
Let $(N,X)$ be a compact manifold endowed with a geodesible field $X$ which is not necessarily of unit length. Then there exists an embedding $e:N\rightarrow \mathbb S^{n}$ with $n=4\,\text{dim }N-1$ and a non-vanishing Beltrami field $u$ on $\mathbb S^{n}$ with constant proportionality factor such that $e_*X=u$. The Riemannian metric for which $u$ is a Beltrami field is the canonical metric of $\mathbb S^n$ in the complement of a ball containing $e(N)$.
\end{Corollary}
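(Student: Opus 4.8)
The plan is to deduce this directly from Theorem~\ref{main1} together with the correspondence of Theorem~\ref{correspondence}; the statement is essentially a combination of the two, the only genuinely new input being the removal of the standing unit-length hypothesis, which I would handle by a reparametrization on $N$.

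First I would reduce to the unit-length case. Since $X$ is geodesible, the characterization recalled in Subsection~\ref{SS.geod} provides a $1$-form $\beta$ on $N$ with $\beta(X)>0$ and $\iota_X d\beta=0$. Setting $\tilde X := X/\beta(X)$, one checks that $\beta(\tilde X)=1$ and $\iota_{\tilde X}d\beta = \frac{1}{\beta(X)}\iota_X d\beta =0$, so $\tilde X$ is geodesible of unit length with the same defining $1$-form $\beta$. Applying Theorem~\ref{main1} to $(N,\tilde X)$ then yields an embedding $e:N\rightarrow \mathbb S^n$ with $n=4\dim N-1$ and a contact form $\alpha$ defining $\xi_{std}$, such that $e(N)$ is invariant under the Reeb field $R$ of $\alpha$, with $e_*\tilde X = R$ and $\alpha=\alpha_{std}$ outside a ball $B$ containing $e(N)$.

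Next I would turn $R$ into a Beltrami field realizing $e_*X$. On $e(N)$ one has $e_*X = (\beta(X)\circ e^{-1})\,R$, a positive reparametrization of $R$. I would choose a global positive function $h\in C^\infty(\mathbb S^n)$ with $h = (\beta(X)\circ e^{-1})^{-1}$ on $e(N)$ and $h\equiv 1$ on $\mathbb S^n\setminus B$, which is possible since $\beta(X)>0$ and $e(N)\subset B$ is closed. Defining $u:=R/h$, the converse direction of Theorem~\ref{correspondence} shows that $u$ is a nonvanishing Beltrami field with constant proportionality factor $f=1$ for an adapted metric $g$ of the form~\eqref{met}, and by construction $u = (\beta(X)\circ e^{-1})R = e_*X$ on $e(N)$, as required.

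Finally I would arrange the metric to be canonical outside $B$. The canonical metric $g_0$ on $\mathbb S^n$ is an adapted metric to $\alpha_{std}$ in the sense of~\eqref{met} (the Reeb/Hopf field being a unit Beltrami field of constant factor on the round sphere), so on $\mathbb S^n\setminus B$, where $\alpha=\alpha_{std}$ and $h\equiv 1$, I would take the almost-complex structure $J$ and the positive function $\tilde h$ in~\eqref{met} to be the standard ones, making $g$ agree with $g_0$ there. The step requiring the most care is precisely this matching across $\partial B$: one must interpolate the adapted-metric data $(h,\tilde h,J)$ between the values forced on a neighborhood of $e(N)$ and the standard values on $\mathbb S^n\setminus B$ while keeping $g$ Riemannian and the Riemannian volume equal to $h\,\alpha\wedge(d\alpha)^m$. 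This is straightforward because, fiberwise, the space of such data (positive functions together with $d\alpha$-compatible complex structures) is contractible, so a smooth interpolation exists, and it does not affect the identity $e_*X=u$ since that only constrains $h$ on $e(N)$. This completes the argument.
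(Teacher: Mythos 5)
Your proposal is correct and follows essentially the same route as the paper's proof: reparametrize $X$ to a unit-length geodesible field, apply Theorem~\ref{main1} to get a Reeb embedding into $(\mathbb S^n,\xi_{std})$, reparametrize the Reeb field by a positive function equal to $1$ outside the ball $B$, and invoke the converse direction of Theorem~\ref{correspondence}. The only difference is that you spell out the interpolation of the adapted-metric data $(h,\tilde h,J)$ across $\partial B$, which the paper leaves implicit; just note that matching the round metric outside $B$ may force the constant proportionality factor to be the one of the Hopf field (not necessarily $1$), a harmless normalization since the statement only asks for \emph{some} constant factor.
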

\begin{proof}
Reparametrizing $X$ we obtain another geodesible vector field $\tilde X$ of unit length. Theorem~\ref{main1} implies that $(N,\tilde X)$ admits an embedding into $(\mathbb S^{n},\xi_{std})$, $n=4\,\text{dim }N-1$, such that there is a defining contact form $\alpha$ whose Reeb vector field satisfies $R|_{e(N)}=\tilde X$. Obviously, we can now reparametrize $R$ by a function $f$ such that $fR|_{e(N)}=X$ and $f=1$ in the complement of a ball $B$ that contains $e(N)$. By Theorem~\ref{correspondence}, the vector field $fR$, which is no longer Reeb in general, is a Beltrami field with constant proportionality factor for some Riemannian metric on $\mathbb S^n$ that can be taken to be the canonical metric in the complement of $B$.
\end{proof}

\subsection{Extension of Reeb flows}\label{Inaba}

We recall a simple characterization due to Inaba~\cite{I} of the vector fields on a submanifold of a contact manifold that can be extended to a Reeb vector field. For the sake of completeness, we include a concise proof.

\begin{lemma}\label{L.Inaba}
Let $(M,\xi)$ be a (cooriented) contact manifold and $(N,X)$ a compact submanifold of $M$ endowed with a tangent (non-vanishing) vector field $X$ which is positively transverse to $\xi$ on $N$. Then there is a contact form $\alpha$ defining $\xi$ such that its Reeb vector field $R$ satisfies $R|_N=X$ if and only if $X$ preserves $TN \cap \xi$.
\end{lemma}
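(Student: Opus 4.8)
The plan is to prove both implications by controlling the $1$-jet of a contact form along $N$, so that the Reeb condition becomes a pointwise linear problem on $T^*M|_N$ with a single tangential obstruction governed by the hypothesis.

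For the necessity (\emph{only if}), I would argue dynamically. Suppose $R$ is the Reeb field of a contact form $\alpha$ defining $\xi$ with $R|_N=X$. From $\iota_R\alpha=1$ and $\iota_Rd\alpha=0$, Cartan's formula gives $\mathcal L_R\alpha=0$, so the Reeb flow $\phi_t^R$ preserves $\alpha$ and hence $\xi=\ker\alpha$. Since $R|_N=X$ is tangent to $N$, uniqueness of integral curves forces the $R$-orbit through any $p\in N$ to coincide with the $X$-orbit and to stay in $N$; thus $\phi_t^R$ and $\phi_t^X$ agree as maps on $N$, and therefore their differentials agree on vectors tangent to $N$. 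For $v\in T_pN\cap\xi_p$ we then get $d\phi_t^X(v)=d\phi_t^R(v)$, which lies in $TN$ (as $\phi_t^X$ preserves $N$) and in $\xi$ (as $\phi_t^R$ preserves $\xi$). Hence $\phi_t^X$ preserves $TN\cap\xi$, i.e. $X$ preserves $TN\cap\xi$.

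For the sufficiency (\emph{if}), fix any contact form $\alpha_0$ defining $\xi$ with the coorientation for which $\alpha_0(X)>0$ on $N$ (guaranteed by positive transversality). Every contact form for $\xi$ has the form $\alpha=\lambda\alpha_0$ with $\lambda>0$, and such an $\alpha$ is automatically contact since $\alpha\wedge(d\alpha)^m=\lambda^{m+1}\,\alpha_0\wedge(d\alpha_0)^m>0$; so the contact condition costs nothing and the problem reduces to choosing $\lambda$. The Reeb conditions $\iota_X\alpha=1$ and $\iota_Xd\alpha=0$ \emph{along} $N$ depend only on the $1$-jet of $\lambda$ along $N$. The first fixes $\lambda|_N=1/\alpha_0(X)$. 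Using $d\alpha=d\lambda\wedge\alpha_0+\lambda\,d\alpha_0$ one computes $\iota_Xd\alpha=(X\lambda)\alpha_0-\alpha_0(X)\,d\lambda+\lambda\,\iota_Xd\alpha_0$, so the second condition reads $d\lambda_p=\bigl((X\lambda)\alpha_0+\lambda\,\iota_Xd\alpha_0\bigr)/\alpha_0(X)$ as a covector on all of $T_pM$; since $X$ is tangent to $N$, the term $X\lambda=d\lambda(X)$ is already determined by $\lambda|_N$, so this prescribes a candidate $1$-jet of $\lambda$ along $N$.

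The crux is compatibility of this prescribed $1$-jet: the covector $d\lambda_p$ dictated by the formula must restrict on $T_pN$ to the intrinsic differential of $\lambda|_N$, which amounts exactly to $d\alpha(X,Y)=0$ for all $Y\in T_pN$, i.e. to $\iota_Xd\beta=0$ for $\beta:=j^*\alpha$, where $j:N\hookrightarrow M$ is the inclusion. This is precisely where the hypothesis enters, via the computation already used in Subsection~\ref{SS.geod}: one has $\beta(X)=1$ and $\ker\beta=TN\cap\xi$, and ``$X$ preserves $TN\cap\xi$'' means $\mathcal L_X\beta=f\beta$; Cartan's formula together with $d(\iota_X\beta)=0$ gives $\iota_Xd\beta=f\beta$, and contracting with $X$ yields $0=\iota_X\iota_Xd\beta=f\beta(X)=f$, so $f=0$ and $\iota_Xd\beta=0$. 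Once compatibility holds, I would realize the consistent $1$-jet by an actual smooth positive $\lambda$ on a tubular neighborhood of $N$ (a routine Taylor/extension construction), extend it to a globally positive function on $M$ by interpolating to a constant outside a smaller neighborhood, and set $\alpha=\lambda\alpha_0$. By construction $\alpha$ defines $\xi$, is contact, and has the correct $1$-jet along $N$, so its Reeb field satisfies $R|_N=X$. The only genuinely nontrivial point is this tangential compatibility, for which the assumption that $X$ preserves $TN\cap\xi$ is indispensable; the remaining steps are standard.
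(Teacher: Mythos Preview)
Your proof is correct and follows essentially the same strategy as the paper's. The only organizational difference is that the paper performs the rescaling in two stages---first multiplying $\alpha_0$ by $h$ to achieve $\alpha_1(X)=1$ on $N$, and then multiplying by a second factor $\lambda$ with $\lambda|_N=1$ to fix the normal derivative---whereas you combine both into a single rescaling $\lambda\alpha_0$ with $\lambda|_N=1/\alpha_0(X)$; the tangential compatibility obstruction $\iota_X d\beta=0$ and its resolution via the hypothesis (through the Cartan computation $\mathcal L_X\beta=f\beta\Rightarrow f=0$) are identical in both arguments, and your necessity direction simply spells out what the paper dismisses as ``trivial.''
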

\begin{proof}
The necessity is trivial because a Reeb vector field preserves the contact distribution. To prove the sufficiency, assume that the vector field $X$ on $N$ preserves the tangent distribution $\eta:=TN\cap \xi$. It is useful to denote the embedding of $N$ into $M$ by $e:N\rightarrow M$, where with a slight abuse of notation we are identifying $N$ with its embedded image.


Let $\alpha_0$ be a defining contact form of $\xi$. Fix the strictly positive smooth function $h_N$ on $N$, given as $h_N:=\frac{1}{e^* \alpha_0(X)}$. By using partitions of unity, we can find a strictly positive function $h: M \to \R^+$ such that $h|_N=h_N$.
Define a new $1$-form $\alpha_1:=h\alpha_0$, still associated to the contact structure $\xi$, which by construction satisfies the first condition in the defining Reeb equations (\ref{eq:Reeb})
$$ \iota_X \alpha_1 =1\,.$$
Since $X$ preserves $\ker \alpha_1$, it preserves $\ker e^*\alpha_1$. Hence this reads as,
\begin{equation*}
\mathcal{L}_Xe^*\alpha_1=fe^*\alpha_1,
\end{equation*}
where $f$ is a smooth function. By Cartan formula this implies that $\iota_Xe^*d\alpha_1=fe^*\alpha_1$. Contracting this equation (in $1$-forms) with the vector field $X$, we immediately obtain that $f=0$. Thus, we have
\begin{equation} \label{eq:nice}
\iota_X d e^*\alpha_1 = 0.
\end{equation}
Now we want to find a new associated contact form multiplying by a strictly positive smooth function $\lambda$ on $M$ such that the vector field $X$ satisfies the Reeb equations (\ref{eq:Reeb}) when applied to the $1$-form $\alpha:=\lambda\alpha_1$. Taking a function $\lambda$ such that $\lambda|_N=1$, by the uniqueness of the Reeb vector field, this is tantamount to saying that $X$  verfies $\iota_Xd(\lambda\alpha_1)= 0$, since this new contact form still satisfies $e^*(\alpha) (X)=1$. Thus, we just need to find a function $\lambda$ such that the $1$-form $\lambda \alpha_1$ satisfies the second Reeb equation in (\ref{eq:Reeb}), i.e.
$$  \iota_Xd(\lambda \alpha_1)=0\,,$$
on $TM|_N$. Expanding it we obtain
\begin{align*}
	0=\iota_X d(\lambda\alpha_1)&= \iota_X (d\lambda \wedge \alpha_1 +  \lambda d\alpha_1)\,,
\end{align*}
that restricted to $N$ reads as
\begin{align} \label{eq:trivial}
	0= -d\lambda + d\alpha_1(X)\,,
\end{align}
where we have used that $X$ is tangent to $N$ and $\lambda|_N=1$. Accordingly, the condition that $\lambda$ must satisfy reads as $d\lambda=d\alpha_1(X)$ on $N$ (over the whole $TM|_{N}$).

Since we proved above that $\iota_Xe^*d\alpha_1=0$, and $\lambda|_N=1$, the equality of $1$-forms (\ref{eq:trivial}) holds on $TN\subset TM$. For the normal directions, just find a smooth function such that the partial derivatives for any $v \in T_pM$ with $p\in N$ satisfy $\frac{\partial \lambda}{\partial v}= d\alpha_1(X,v)$. This determines the whole $1$-jet of the function $\lambda$ on $N$. Again, by a standard argument taking partitions of the unity, this implies the existence of a positive smooth function $\lambda$ on $M$ that extends this given $1$-jet. The lemma then follows.
\end{proof}

\begin{Remark}
We remark that the vector field $X$ in Lemma~\ref{L.Inaba} is geodesible. Indeed, following the notation of the proof of the lemma, the $1$-form $\beta:=e^*\alpha$ satisfies that $\iota_X\beta=1$ and $\iota_Xd\beta=0$, which implies the geodesibility according to Subsection~\ref{SS.geod}.
\end{Remark}

\begin{Remark}\label{rem:complementstd}
It follows from the proof of the lemma, that if $\alpha_0$ is an associated contact form for $\xi$, then the $1$-form $\alpha$ can be taken to be equal to $\alpha_0$ in the complement of a neighborhood of $N\subset M$ (just take extensions of the functions $h$ and $\lambda$ in the proof so that $h=\lambda=1$ in the complement of the neighborhood).
\end{Remark}

\subsection{Existence of Reeb embeddings}\label{S:RG}

The characterization of vector fields extendible to Reeb flows presented in the previous subsection suggests the following definition { (see also Definition \ref{def:Remb})}

\begin{defi}\label{d:reeb}
Let $(N,X)$ be a geodesible field on a compact manifold. An embedding $e: (N,X) \rightarrow (M,\xi)$ of $N$ into a contact manifold $M$ is called a \emph{Reeb embedding} if there is a contact form $\alpha$ defining $\xi$ such that its Reeb vector field $R$ satisfies $e_*X=R$ (in particular $e(N)$ is an invariant submanifold of $R$). If we further assume that the geodesible vector field comes with a fixed preserved distribution $\ker \beta=\eta$, then an embedding is called an \emph{iso-Reeb embedding} if $e^*\xi=\eta$.
\end{defi}

Observe that a Reeb embedding $e:(N,X)\rightarrow (M,\xi)$ clearly induces an iso-Reeb embedding just by declaring $\eta:=e^*\xi$.
As noticed before, any Reeb vector field tangent to a submanifold is geodesible on it. Theorem~\ref{main1} then claims that the converse also holds, i.e. that for any geodesible field $(N,X)$ there exists a Reeb embedding into a high-dimensional sphere endowed with the standard contact structure. The following technical lemma is key to prove the main result of this section. For the proof, we follow~\cite[Section 16.2.2]{hprinc}.

\begin{lemma}\label{tech}
Let $(N,X,\eta)$ be a geodesible field on a compact manifold $N$ of dimension $n_0$, and $\beta$ a defining $1$-form of the hyperplane distribution $\eta$. Assume that there exists an embedding $e:(N,X,\eta)\rightarrow M$ into a manifold of dimension $2m-1$ endowed with a hyperplane distribution $\xi$ defined on $e(N)$ such that
\begin{enumerate}
\item $\eta=e^* \xi$.
\item There is a nondegenerate $2$-form $\omega$ on $\xi|_N$.
\item $\omega|_{TN}=d\beta$.
\end{enumerate}
Then there is a small neighborhood $U$ of $e(N)$ in $M$ and a contact form $\alpha$ on $U$ such that $e^*\alpha=\beta$.
\end{lemma}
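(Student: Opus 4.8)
The plan is to build the contact form $\alpha$ by prescribing its $1$-jet along $e(N)$ inside a tubular neighborhood, and then to invoke the openness of the contact condition to produce the neighborhood $U$. Concretely, I would fix a tubular neighborhood of $e(N)$ in $M$ together with a splitting $TM|_{e(N)}=TN\oplus\nu$ into the tangent bundle and a chosen normal bundle, and construct a smooth $1$-form $\alpha$ whose value along $e(N)$ and whose first normal derivatives are prescribed so that (i) $e^*\alpha=\beta$, (ii) $\ker\alpha|_{e(N)}=\xi$, and (iii) $d\alpha|_\xi=\omega$ along $e(N)$. Once such an $\alpha$ is in hand, condition (iii) together with the nondegeneracy of $\omega$ (hypothesis (2)) forces $\alpha\wedge(d\alpha)^{m-1}$ to be nonzero at every point of $e(N)$; by continuity it is then nonzero on a whole neighborhood $U$, and this $U$ with this $\alpha$ are the desired objects. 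Note that $X$ is transverse to $\xi$, since $\eta=e^*\xi$ and $\beta(X)=1\neq 0$, so after fixing the coorientation we may normalize $\alpha(X)=1$.

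The $0$-jet is essentially forced. Along $e(N)$ I would declare $\alpha_p$ to be the unique covector in $T_p^*M$ with $\ker\alpha_p=\xi_p$ and $\alpha_p(X)=1$; this is well defined because $\xi_p$ is a hyperplane and $X$ is transverse to it, so $T_pM=\xi_p\oplus\langle X_p\rangle$. By hypothesis (1), $TN\cap\xi=\eta=\ker\beta$, so $\alpha_p$ vanishes on $\eta_p$ and equals $1$ on $X$; since $\beta$ shares these two properties and $TN=\eta\oplus\langle X\rangle$, the restriction of $\alpha_p$ to $T_pN$ coincides with $\beta_p$. This is exactly the requirement $e^*\alpha=\beta$, so (i) and (ii) are achieved at the level of the $0$-jet.

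It remains to realize (iii), which is a condition on the first normal derivatives of $\alpha$. The key observation is that $d\alpha|_{TN}$ is not free: from $e^*\alpha=\beta$ one gets $e^*d\alpha=d\beta$, so $d\alpha$ evaluated on pairs of vectors tangent to $e(N)$ must equal $d\beta$. On $\eta=\xi\cap TN$ this forced value is consistent with the target precisely because hypothesis (3) gives $\omega|_\eta=d\beta|_\eta$ — this compatibility is the crux of the argument, and it is exactly what the hypotheses are arranged to supply. The remaining components of $\omega$ on $\xi$, namely those pairing $\eta$ with directions of $\xi$ transverse to $TN$ and those among the latter, involve the still-free normal derivatives of the coefficients of $\alpha$; solving the resulting affine linear system pointwise, and choosing the solution smoothly in $p$, prescribes these derivatives so that $d\alpha|_\xi=\omega$ holds along $e(N)$. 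A standard Borel/Whitney-type extension through the tubular neighborhood and a partition of unity then produces an honest smooth $1$-form on $U$ realizing this $1$-jet. The main technical care goes into this last bookkeeping — verifying that the forced tangential part $d\beta$ matches $\omega$ on $\eta$ (hypothesis (3)) and globalizing the pointwise linear-algebra choice smoothly; crucially, no integrability obstruction arises, since we are only prescribing a $1$-jet of a $1$-form, and the contact conclusion then follows automatically from the nondegeneracy of $\omega$ and the openness of the contact condition.
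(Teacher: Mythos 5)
Your proposal is correct and takes essentially the same approach as the paper's proof: the paper's locally defined forms $\alpha_j=\pi^*\beta+\sum_{k} y_{k,j}\,\beta_{k,j}$, glued by a partition of unity, are precisely a realization of the $1$-jet you prescribe, with the value $\pi^*\beta$ along $N$ giving $\ker\alpha|_N=\xi$ and $e^*\alpha=\beta$, and the terms linear in the fiber coordinates encoding the free normal derivatives that achieve $d\alpha|_{\xi}=\omega$ along $N$. In both arguments hypothesis (3) supplies the needed compatibility on $\eta$, and the contact condition on a neighborhood of $e(N)$ then follows from the nondegeneracy of $\omega$ and openness.
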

\begin{proof}
For notational simplicity, we shall identify $N$ with its embedded image $e(N)$. Consider a small neighborhood $U\subset M$ of $N$, which can be identified with a normal disk bundle $\pi:U\rightarrow N$. Fix a covering by small open sets $V_j\subset N$ where $U|_{V_j}=V_j\times\mathbb R^{m'-1}$, $m':=2m-n_0$. Since $e^*\xi = \eta$, then the hyperplane distribution $\xi$ on $N$ can be split as $\xi|_{V_j}=\eta\oplus \mathbb R^{m'-1}$. In terms of this splitting, the assumption $\omega|_{TN}=d\beta$ implies that the $2$-form $\omega$ can be written as
$$
\omega=d(\pi^*\beta)+\omega'
$$
where $\omega'$ is a $2$-form that satisfies that $\omega'|_{\eta}=0$.

Let us introduce coordinates $(y_{1,j},\cdots,y_{m'-1,j})$ in the second factor of $V_j \times \mathbb R^{m'-1}$. In these coordinates, we can assume that $\omega'$ has the form
$$\omega'= \sum_{k=1}^{ m'-1} dy_{k,j} \wedge \beta_{k,j}$$
where $\beta_{k,j}$ are suitable $1$-forms on $N$. Now we can define on $U$ the $1$-form
$$\alpha_j:= \pi^*\beta + \sum_{k=1}^{m'-1}{y_{k,j} \beta_{k,j}}\,.$$
Notice that $e^*\alpha_j=\beta$ and that $\ker \alpha_j|_N=\xi$. Moreover, since
\begin{align*}
(d\alpha_j)|_N&=d(\pi^*\beta)+\sum_{k=1}^{m'-1}(dy_{k,j}\wedge \beta_{k,j}|_N + y_{k,j}\wedge d\beta_{k,j}|_N) \\
&=  d(\pi^*\beta)+\sum_{k=1}^{m'-1}dy_{k,j}\wedge \beta_{k,j}|_N  \\
&=\omega|_N,
\end{align*}
is nondegenerate on $\xi|_N$, and $X$ is transverse to $\xi$. Observe that $\alpha_j$ is a contact form { near the zero section of} $V_j\times \R^{m'-1}$. Choose a partition of the unity $\chi_j$ subordinated to the open cover $V_j$, and define $\alpha=\sum_j \chi_j \alpha_j$.
Expanding this expression we get that:
\begin{align*}
(d\alpha)|_N&=  d(\pi^*\beta)+\sum_j \chi_j \sum_{k=1}^{m'-1}dy_{k,j}\wedge \beta_{k,j}|_N  \\
&=\omega|_N.
\end{align*}
This concludes the proof.
\end{proof}

\begin{proof}[Proof of Theorem~\ref{main1} :]
Following the notation introduced in Lemma~\ref{tech}, $\eta$ is the hyperplane distribution on $N$ preserved by $X$, and $\beta$ is a defining $1$-form. Consider the vector bundle $M$ defined by the dual distribution $\eta^*$ over $N$, and denote the bundle projection as $\pi:M \rightarrow N$. Observe that $\text{dim }M=2n_0-1$ and that the tangent bundle of $M$ at the zero section (which is the manifold $N$) splits as $TM|_N=TN\oplus\eta^*=\langle X \rangle \oplus \eta\oplus \eta^*$.
This distribution $\eta \oplus \eta^*$ over $N$ has dimension $2n_0-2$ and is equipped with the canonical symplectic form $\omega_0$ defined by
$$\omega_0((v_1\oplus \alpha_1),(v_2\oplus \alpha_2)):= \alpha_2(v_1) - \alpha_1(v_2)\,,$$
where $v_k$ is a section of $\eta$ and $\alpha_k$ is a section of $\eta^*$. Observe that with this symplectic structure $\eta$ on $N$ is an isotropic subspace, i.e. denoting by $j:\eta\rightarrow \eta\oplus \eta^*$ the natural inclusion, we have that $j^*\omega_0=0$.

Let us now perturb the symplectic structure $\omega_0$ by lifting a $2$--form on $N$ to $TM|_N$. For every point $p \in N$ we define the $2$-form
$$\omega_N|_p= A(\omega_0)|_p + (d\beta)|_p, $$
where $A>0$ is a constant large enough so that $\omega_N$ defined on $N$ is still nondegenerate.
It follows from the previous construction that, if $e_0:(N,X,\eta)\rightarrow M$ denotes the natural inclusion then we can apply Lemma~\ref{tech} to conclude that there is a contact form $\alpha$ in a neighborhood $U$ of $N$ in $M$ such that $e_0^*\alpha=\beta$. Notice that the contact distribution $\xi:=\ker \alpha$ coincides with $\eta \oplus \eta^*$ on $N$.

Summarizing, we have constructed an open contact manifold $U$ of dimension $2n_0-1$ with a submanifold $N$ endowed with a vector field $X$ that is positively transverse to the contact distribution $\xi$ and preserves $TN\cap \xi=\eta$. { Lemma~\ref{lem:sph} below and the $h$-principle for isocontact embeddings Theorem \ref{isoc}} imply that $U$ can be {isocontactly} embedded into $(\mathbb S^{n},\xi_{std})$ for $n=4n_0-1$. Denoting this embedding by $e:U\rightarrow \mathbb S^{n}$ it obviously satisfies $e_0^*e^*\xi_{std}=\eta$. Identifying $N$ with its embedded image in $\mathbb S^{n}$ (via the embedding $e\circ e_0$), we have that the field $X$ preserves $TN \cap \xi_{std}= \eta$, so we can apply Lemma~\ref{L.Inaba} to conclude that there is a contact form $\tilde\alpha$ whose Reeb field $R$ coincides with $X$ on $N$, and $\tilde\alpha=\alpha_{std}$ in the complement of a neighborhood of $N$. The theorem then follows.
\end{proof}

\begin{lemma}\label{lem:sph}
Any smooth embedding of a contact manifold $(N^{2n_0-1},\eta)$ into $(\mathbb S^{4n_0-1},\xi_{std})$ is a formal isocontact embedding.
\end{lemma}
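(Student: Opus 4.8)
The plan is to read ``formal isocontact embedding'' in the sense required by the hypothesis of Theorem~\ref{isoc}: we must produce a homotopy, through bundle monomorphisms $F_t:TN\to f^*T\mathbb S^{4n_0-1}$ all covering $f$, from $F_0=df$ to an \emph{isocontact} monomorphism $F_1$ (one with $F_1^{-1}(\xi_{std})=\eta$ and $F_1|_\eta$ conformally symplectic). Since $f$ is an honest embedding, $df$ is a genuine monomorphism, so the whole statement becomes a purely homotopy-theoretic assertion: that a given section of a bundle of injective linear maps can be deformed into a distinguished subbundle. First I would fix a contact form $\beta$ for $\eta$ with Reeb field $R_N$ and the standard contact form $\alpha_{std}$ with Reeb field $R_M$, producing the splittings $TN=\langle R_N\rangle\oplus\eta$ and $T\mathbb S^{4n_0-1}=\langle R_M\rangle\oplus\xi_{std}$ together with the conformal symplectic structures $d\beta|_\eta$ and $d\alpha_{std}|_{\xi_{std}}$, and choose compatible almost complex structures (a contractible choice) turning $\eta$ into a complex bundle of rank $n_0-1$ and $\xi_{std}$ into one of rank $2n_0-1$.

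Next I would introduce the bundle $\mathcal M\to N$ whose fibre over $p$ is the space $\mathrm{Mono}(T_pN,T_{f(p)}\mathbb S^{4n_0-1})$ of injective linear maps, together with the subbundle $\mathcal I\subset\mathcal M$ of isocontact monomorphisms. Then $df$ is a section of $\mathcal M$ and an isocontact monomorphism covering $f$ is exactly a section of $\mathcal I$, so it suffices to homotope $df$ into $\mathcal I$. The crux is the homotopy type of the two fibres. The fibre of $\mathcal M$ retracts onto the real Stiefel manifold $V_{2n_0-1}(\mathbb R^{4n_0-1})$, which is $(2n_0-1)$-connected. For $\mathcal I$, an isocontact monomorphism is the datum of a conformally symplectic injection $\eta_p\to\xi_{std}$ together with the image of $R_N$, which is only required to be positively transverse to $\xi_{std}$; the latter choice sweeps out a contractible convex set and the conformal factor is a contractible $\mathbb R^{+}$, so by the standard symplectic-linear-algebra retraction $Sp(4n_0-2)/Sp(2n_0)\simeq U(2n_0-1)/U(n_0)$ the fibre of $\mathcal I$ is homotopy equivalent to the complex Stiefel manifold $V_{n_0-1}(\mathbb C^{2n_0-1})$, which is $2n_0$-connected.

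With these connectivities the conclusion follows from obstruction theory over the $(2n_0-1)$-dimensional manifold $N$: from the long exact sequence of the pair one gets $\pi_j(\mathcal M,\mathcal I)=0$ for every $j\le 2n_0-1=\dim N$, so every obstruction to deforming the section $df$ into $\mathcal I$ lies in a cohomology group with vanishing coefficients. Hence $df$ is homotopic, through monomorphisms covering $f$, to an isocontact monomorphism, which is precisely the statement that $f$ is a formal isocontact embedding. Equivalently, one may argue by existence and uniqueness: a section of $\mathcal I$ exists because its fibre is at least $(\dim N-1)$-connected, while any two sections of $\mathcal M$ are homotopic because its fibre is $(\dim N)$-connected, so $df$ is homotopic to the isocontact section.

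I expect the main obstacle to be the correct homotopical identification of the fibre of $\mathcal I$ — recognizing that ``conformally symplectic on $\eta$'' together with ``$R_N$ mapped positively transversally'' collapses, up to contractible factors, to a complex Stiefel manifold — and then checking that the numerology closes. The dimension hypothesis is exactly what makes the argument work: since $\dim\mathbb S^{4n_0-1}=2\dim N+1$, the real Stiefel fibre of $\mathcal M$ is already $(\dim N)$-connected and the complex Stiefel fibre of $\mathcal I$ is even more connected, so the pair is $(\dim N)$-connected and no top-degree obstruction survives; a smaller ambient dimension would leave a potentially nonzero obstruction class in degree $\dim N$.
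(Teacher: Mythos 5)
Your proposal is correct and is in essence the paper's own argument: both reduce the statement to the connectivity, in degrees up to $\dim N$, of spaces of real and complex/symplectic linear monomorphisms, exploiting exactly the dimension count $\dim \mathbb S^{4n_0-1} = 2\dim N + 1$. The only difference is packaging: the paper builds the homotopy in explicit stages (first aligning the image of the Reeb field with $R_{std}$ using that any two nonvanishing sections of $T\mathbb S^{4n_0-1}|_N$ are homotopic, then pushing $\eta$ into $\xi_{std}$ via complements, then complexifying via the connectedness of the inclusion of complex monomorphisms into real ones), whereas you run a single relative obstruction-theory argument on the bundle pair $(\mathcal M,\mathcal I)$ with fibres identified as the Stiefel manifolds $V_{2n_0-1}(\mathbb R^{4n_0-1})$ and $V_{n_0-1}(\mathbb C^{2n_0-1})$ --- the same connectivity facts, organized globally.
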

\begin{proof}
Let $f:N^{2n_0-1}\rightarrow \mathbb S^{4n_0-1}$ be a smooth embedding. Let us construct a family of monomorphisms  $F_t: TN^{2n_0-1} \rightarrow T\mathbb S^{4n_0-1}|_N $ such that $F_0=df$, $F_1(R_{\eta})=R_{std}$ (the corresponding Reeb fields), $F_1(\eta)\subset \xi_{std}$ and $F_1$ is a complex monomorphism. To this end, we first find a family of vector fields $R_t$ over $T\mathbb S^{4n_0-1}|_N$ such that $R_0=R_{\eta}$ and $R_1=  R_{std}$. This family exists because the connectedness of the sphere is higher than the dimension of $N$, i.e., any two sections of $T\mathbb S^{4n_0-1}|_N$ are homotopic through non-vanishing sections since $4n_0> 2n_0-1$. Now fix $\xi_t$ to be any complementary of $R_t$ which satisfies $f_* \eta \subset \xi_1$. This automatically provides a family, canonical up to homotopy of (real) monomorphisms, $F_t: TN \to T\mathbb S^{4n_0-1}|_N$ such that $F_0=df$. It remains to show that $F_1: \eta \to \xi_{std}|_N$ is homotopic { (by an homotopy fixed along the base)} to a complex monomorphism, but this is an easy consequence of the connectedness of the inclusion map of the space of complex monomorphisms into the space of real monomorphisms, i.e., the rank of connectedness is bigger than the dimension of $N$.
\end{proof}

\begin{Corollary}
Let $X$ be a nonvanishing vector field on a compact manifold $N$. Then $N$ embeds into some contact manifold $(M,\xi)$ such that $X=R|_N$ for a Reeb vector field $R$ of some contact form if and only if $X$ is geodesible.
\end{Corollary}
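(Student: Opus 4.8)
The plan is to prove the two implications separately, with almost all of the real work outsourced to results already established above. Note first that a geodesible field satisfies $\beta(X)>0$ and is therefore automatically nonvanishing, while a Reeb field never vanishes; so the nonvanishing hypothesis is consistent with both sides of the equivalence and costs nothing. For the necessity (if $X=R|_N$ then $X$ is geodesible) I would argue exactly as in the Remark following Lemma~\ref{L.Inaba}: given an embedding $e:N\rightarrow (M,\xi)$ and a contact form $\alpha$ whose Reeb field $R$ restricts to $X$, set $\beta:=e^*\alpha$. Then $\beta(X)=\alpha(R)=1>0$ and, since $X$ is tangent to $N$, $\iota_Xd\beta=\iota_Xe^*d\alpha=e^*(\iota_Rd\alpha)=0$. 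By Gluck's characterization recalled in Subsection~\ref{SS.geod}, these two facts are precisely equivalent to the geodesibility of $X$, which is in fact of unit length because $\beta(X)=1$. This direction is thus a one-line computation.

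For the sufficiency (if $X$ is geodesible, then $N$ embeds into a contact manifold with $X=R|_N$) I would simply invoke Theorem~\ref{main1}. Under the standing convention that a geodesible field is of unit length, $X$ preserves a transverse hyperplane distribution $\eta=\ker\beta$, and Theorem~\ref{main1} yields an embedding $e:N\rightarrow \mathbb S^{n}$ with $n=4\dim N-1$ together with a contact form $\alpha$ defining $\xi_{std}$ such that $e(N)$ is invariant under the Reeb field $R$ of $\alpha$ and $e_*X=R$, i.e. $X=R|_{e(N)}$. Taking $(M,\xi)=(\mathbb S^{n},\xi_{std})$ supplies the required contact manifold and closes the equivalence.

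The one point I would flag, and the only place where care is needed, is the normalization: the equation $X=R|_N$ together with $\alpha(R)=1$ forces $\beta(X)=1$, so the equivalence is genuinely about \emph{unit-length} geodesible fields (as fixed by the standing convention) and no reparametrization of $X$ is permitted — rescaling $X$ by a nonconstant factor would spoil either $\alpha(R)=1$ or the preservation of $TN\cap\xi$. Beyond this bookkeeping there is no obstacle at the level of the corollary: all the genuine difficulty, namely the construction of the contact form in a neighborhood of $N$ via Lemma~\ref{tech} and the isocontact embedding into the sphere via Lemma~\ref{lem:sph}, has already been carried out in the proof of Theorem~\ref{main1}.
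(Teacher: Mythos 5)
Your proof is correct and follows exactly the route the paper intends: necessity is the pullback computation $\beta:=e^*\alpha$, $\beta(X)=1$, $\iota_Xd\beta=0$ from the Remark after Lemma~\ref{L.Inaba} combined with Gluck's characterization, and sufficiency is a direct application of Theorem~\ref{main1} with $(M,\xi)=(\mathbb S^{4\dim N-1},\xi_{std})$. Your remark on the normalization is also accurate: under the paper's standing unit-length convention the statement is consistent, since $X=R|_N$ forces $e^*\alpha(X)=1$.
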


\begin{Remark}
The isocontact embedding theorem used in the proof of Theorem~\ref{main1} works for any ambient contact manifold of dimension $n=4\,\text{dim }N-1$ (because it gives an embedding into a Darboux neighborhood of any contact manifold of dimension bigger or equal than $4\,\text{dim }N-1$). This implies, in particular, that the ambient manifold in Theorem~\ref{main1} can be taken to be $(\mathbb R^{n},\xi_{std})$.
\end{Remark}

\begin{Remark}\label{open}
When the manifold $N$ is non-compact the following observation allows one to prove a result analogous to Theorem~\ref{main1}. Indeed, Lemma~\ref{L.Inaba} works if $N$ is a properly embedded submanifold, and the embedding provided by Whitney embedding theorem can be taken proper~\cite{Lee}. Accordingly, Theorem~\ref{main1} provides a Reeb embedding of any pair $(N,X)$ with $N$ non-compact and $X$ geodesible into $(\R^{n},\xi_{std})$, $n=4\,\text{dim }N-1$.
\end{Remark}

\subsection{An improved Reeb embedding theorem}\label{SS.another}
 Theorem~\ref{main1} shows the existence of a Reeb embedding of $(N,X)$ into $\mathbb S^n$ for $n=4\,\text{dim }N-1$. This suggests two questions:
\begin{enumerate}
\item Can we improve the bound on the dimension $n$ of the target space?
\item Can an embedding $e:(N,X) \rightarrow (M,\xi)$ be deformed into a Reeb embedding via an isotopy which is $C^0$-close to the identity?
\end{enumerate}

Let us finish this section by stating Theorem~\ref{main2}, which is a generalization of Theorem~\ref{main1}, and answers these questions. Its proof, which makes use of some non trivial modern $h$--principle results in contact topology, is technically much more involved than the proof of Theorem~\ref{main1}, and will be presented in Section~\ref{S4} together with a few corollaries that can be useful for other applications in Contact Geometry. This theorem is key for the proofs of Theorems~\ref{T.main1} and~\ref{T.main2} stated in the Introduction.

\begin{theorem*}[Theorem~\ref{main2}]
Let $e: (N,X) \rightarrow (M,\xi)$ be a embedding of $N$ into a contact manifold $(M,\xi)$ with $X$ a geodesible vector field on $N$. Then:
\begin{itemize}
\item If $\dim M \geq 3\dim N +2$, then $e$ is isotopic to a (small) Reeb embedding $\tilde e$, and $\tilde e$ can be taken $C^0$-close to $e$.
\item If $\dim M \geq 3\dim N$ and $M$ is overtwisted, then $e$ is isotopic to a Reeb embedding.
\end{itemize}
\end{theorem*}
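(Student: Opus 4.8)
The plan is to reduce the statement, via Inaba's characterization, to an existence result for iso-Reeb embeddings, and then to prove that result through an $h$-principle. First I would observe that by Lemma~\ref{L.Inaba} it suffices to deform $e$ through an isotopy to an embedding $\tilde e$ along which $X$ is positively transverse to $\xi$ and preserves $T\tilde e(N)\cap\xi$. Since $X$ preserves the distribution $\eta$ attached to the geodesible structure, it is enough to arrange $\tilde e^{*}\xi=\eta$: indeed then $T\tilde e(N)\cap\xi=\tilde e_{*}\eta$ is automatically invariant under $\tilde e_{*}X$, and Lemma~\ref{L.Inaba} produces a contact form whose Reeb field restricts to $X$, so that $\tilde e$ is a Reeb embedding in the sense of Definition~\ref{d:reeb}. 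Thus the whole theorem follows from the existence, isotopic (and $C^{0}$-close in the generic case) to $e$, of an \emph{iso-Reeb embedding}, which is exactly what the $h$-principles of Theorems~\ref{hp1} and~\ref{hp2} are designed to provide. Unlike the proof of Theorem~\ref{main1}, which embedded a whole fat neighborhood model of dimension $2\dim N-1$, here I would deform the given embedding directly, which is what allows the linear bound to improve from $4\dim N-1$ to roughly $3\dim N$.

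Second, I would set up the formal problem. A \emph{formal iso-Reeb embedding} is an embedding $e$ together with a monomorphism $F\colon TN\to TM$ covering $e$ and homotopic to $de$ through monomorphisms, such that $F(\eta)\subset\xi$ with $F|_{\eta}$ conformally symplectic with respect to $d\beta$ and $CS(\xi)$, and such that $F(X)$ is positively transverse to $\xi$. Given such a formal solution, the germ of a compatible contact form along the image is built essentially as in Lemma~\ref{tech}, and the honest deformation into an isocontact embedding is obtained by feeding this germ into an ambient $h$-principle: the isocontact embedding theorem (Theorem~\ref{isoc}) in the generic case, and the flexibility of~\cite{BEM} underlying Theorem~\ref{CorBEM} in the overtwisted case. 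Composing with Inaba's Lemma~\ref{L.Inaba} upgrades the resulting iso-Reeb embedding to a Reeb embedding, finishing the argument once the formal solution is known to exist.

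Third, the dimensional hypotheses are precisely what makes the formal problem solvable. The formal iso-Reeb condition asks that $de$ be homotoped, through monomorphisms, so that $\eta$ is carried into $\xi$ as a conformally symplectic (in the worst case isotropic) subbundle while $X$ is carried to a direction transverse to $\xi$. By general position and obstruction theory this homotopy is unobstructed once the codimension of $e$ is at least $2\dim N$, that is $\dim M\geq 3\dim N$, since then the relevant bundle of admissible monomorphisms has fibres connected through dimension larger than $\dim N$. Realizing such a formal solution by an honest isotopy through the rigid isocontact $h$-principle (the closed case of Theorem~\ref{isoc}) costs two further units of codimension, which yields the bound $\dim M\geq 3\dim N+2$ together with $C^{0}$-closeness near a core, hence a \emph{small} Reeb embedding. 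In the overtwisted case the flexibility behind Theorem~\ref{CorBEM} realizes the formal solution with no loss of codimension, giving the sharper bound $\dim M\geq 3\dim N$ at the cost of $C^{0}$-control, so that one only obtains an isotopy rather than a small, $C^{0}$-close embedding.

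The main obstacle is the construction and verification of the iso-Reeb $h$-principle itself: one must carry the conformal symplectic datum on $\eta$ and the transversality of $X$ through the deformation simultaneously, match them with the formal hypotheses of the ambient contact $h$-principles, and, in the overtwisted regime, correctly exploit the full flexibility of~\cite{BEM} to save the two units of codimension while tracking exactly which control survives. The homotopy-theoretic bookkeeping of the third step, rather than any single geometric construction, is where the sharp dimension bounds are won or lost.
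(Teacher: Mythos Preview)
Your overall architecture matches the paper: reduce via Lemma~\ref{L.Inaba} to finding an iso-Reeb embedding, set up the formal notion, produce a formal solution under the dimensional hypothesis, and then invoke the $h$-principles of Theorems~\ref{hp1} and~\ref{hp2}. Two points need sharpening.

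First, the phrase ``by general position and obstruction theory this homotopy is unobstructed once the codimension is at least $2\dim N$'' conceals the actual work. The precise statement you need is that any rank-$(n-1)$ subbundle of a rank-$2m$ symplectic bundle over an $n$-manifold is homotopic to an \emph{isotropic} subbundle whenever $2m\geq 3n-1$; this is Lemma~\ref{isotr} (invoked inside Lemma~\ref{lemmaHo}), and its proof is a stable-range computation for the gamma spaces $\Gamma_k=SO(2k)/U(k)$ governing the relative homotopy groups of the pair $(\operatorname{Grass}(n-1,\R^{2m}),\operatorname{Grass}_{is}(n-1,\R^{2m}))$. This is where the bound $3\dim N$ genuinely comes from, and it is not a general-position argument; below that range the obstruction can be nonzero (cf.\ Proposition~\ref{lem:sharp}). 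Once $\eta$ is formally isotropic, the paper still has to perturb the symplectic structure (the $A\omega_0+d\beta$ trick from the proof of Theorem~\ref{main1}) to turn ``isotropic'' into ``conformally symplectic for $d\beta$'', and then unwind the homotopy via Corollary~\ref{coro:formalDarboux} to produce the formal iso-Reeb data; your outline skips these steps.

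Second, your accounting for the extra $+2$ in the tight case is not the paper's mechanism. You attribute it to the closed case of Theorem~\ref{isoc}, but the proof of Theorem~\ref{hp2} applies the \emph{open} case of Theorem~\ref{isoc} to an open neighborhood $E_1$ of $N$. What actually costs the $+2$ is the \emph{small} condition of Definition~\ref{sfR}: one must first split off a proper symplectic subbundle $L\subset\xi|_N$ of rank $\geq 2$ transverse to $TN$, so that $F_1(\eta)$ lands in the complementary $\xi'$ of rank $2m-2$; Lemma~\ref{isotr} applied to $\xi'$ then gives $2m-2\geq 3n-1$, i.e.\ $\dim M\geq 3n+2$. (Invoking the closed case of Theorem~\ref{isoc} as you suggest would cost four codimensions, not two.) In the overtwisted case no such splitting is needed because Theorem~\ref{CorBEM} handles the codimension-zero germ directly, which is why the bound stays at $3\dim N$ but the $C^0$-closeness is lost.
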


The notions in the statement will be introduced in Section~\ref{S4}. For the proofs of Theorems~\ref{T.main1} and~\ref{T.main2} the (weaker) statement that provides a general Reeb embedding is sufficient. Roughly speaking, Theorem~\ref{main2} shows that Reeb embeddings are completely determined by differential topology invariants. This fact can be easily encoded in the $h$-principle philosophy (see~\cite{hprinc,G}), details will be provided in Section~\ref{S4}. As a Corollary we obtain the following improved version of Corollary~\ref{Beltr}; the proof is analogous so we omit it.
\begin{Corollary}\label{Cor:Belt}
Let $(N,X)$ be a compact manifold endowed with a geodesible field $X$ which is not necessarily of unit length. Then there exists an embedding $e:N\rightarrow \mathbb S^{n}$ with $n$ the smallest odd integer $n\in\{3\,\text{dim }N+2,3\,\text{dim }N+3\}$, and a non-vanishing Beltrami field $u$ on $\mathbb S^{n}$ with constant proportionality factor such that $u|_{e(N)}=X$. The Riemannian metric for which $u$ is a Beltrami field is the canonical metric of $\mathbb S^n$ in the complement of a ball containing $e(N)$.
\end{Corollary}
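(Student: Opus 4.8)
The plan is to run the argument of Corollary~\ref{Beltr} \emph{mutatis mutandis}, replacing the use of Theorem~\ref{main1} by the sharper Theorem~\ref{main2}; the only genuinely new points are a parity check on the admissible dimensions and the verification that the resulting contact form can still be taken standard outside a ball. First I would reparametrize the (not necessarily unit) geodesible field $X$ by a positive function to obtain a geodesible field $\tilde X$ of unit length preserving the same hyperplane distribution $\eta=\ker\beta$, exactly as in the proof of Corollary~\ref{Beltr}.

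Next, set $n$ to be the smallest odd integer in $\{3\,\text{dim }N+2,3\,\text{dim }N+3\}$. The role of the parity bookkeeping is to guarantee that this smallest odd value always satisfies $n\geq 3\,\text{dim }N+2$: among the two consecutive integers $3\,\text{dim }N+2$ and $3\,\text{dim }N+3$ exactly one is odd, and if $\dim N$ is odd it is $3\,\text{dim }N+2$, while if $\dim N$ is even it is $3\,\text{dim }N+3$; in both cases $n\geq 3\,\text{dim }N+2$. Hence $\mathbb S^n$ is an odd-dimensional manifold of the required dimension carrying its standard contact structure $\xi_{std}$, and I may invoke the first bullet of Theorem~\ref{main2}. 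Concretely, I would start from any smooth embedding $N\hookrightarrow\mathbb S^n$, which exists by Whitney's theorem since $n\geq 2\,\text{dim }N$, and apply Theorem~\ref{main2} to deform it, by an isotopy that is $C^0$-close to the identity, into a (small) Reeb embedding $e$. This produces a contact form $\alpha$ defining $\xi_{std}$ whose Reeb field $R$ satisfies $e_*\tilde X=R$, so that $R|_{e(N)}=\tilde X$ and $e(N)$ is an invariant submanifold.

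Finally I would recover $X$ and pass to a Beltrami field exactly as in Corollary~\ref{Beltr}: reparametrize $R$ by a positive function $f$ with $fR|_{e(N)}=X$ and $f\equiv 1$ outside a ball $B$ containing $e(N)$, and invoke Theorem~\ref{correspondence} to conclude that $fR$ is a nonvanishing Beltrami field with constant proportionality factor for a metric adapted to $\alpha$. The one step that requires care --- and the main (modest) obstacle relative to the $\dim M=4\,\text{dim }N-1$ case --- is that this metric be the canonical one of $\mathbb S^n$ outside $B$. This holds because the Reeb-field construction underlying Theorem~\ref{main2}, like Lemma~\ref{L.Inaba}, modifies the contact form only on a neighborhood of $e(N)$, so $\alpha$ may be arranged to equal $\alpha_{std}$ away from $B$; since the canonical metric is adapted to $\alpha_{std}$, the adapted metric furnished by Theorem~\ref{correspondence} can be taken to coincide with it in the complement of $B$. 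The substantive difficulty is entirely absorbed into Theorem~\ref{main2}, whose $h$-principle machinery is precisely what lowers the ambient dimension from $4\,\text{dim }N-1$ to roughly $3\,\text{dim }N$; once that theorem is granted, the present corollary reduces to this short reparametrization argument, which is why we omit the routine details.
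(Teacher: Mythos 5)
Your proposal is correct and follows exactly the route the paper intends: the paper omits this proof precisely because it is the argument of Corollary~\ref{Beltr} run \emph{mutatis mutandis} with Theorem~\ref{main2} (first bullet) replacing Theorem~\ref{main1}, which is what you do, including the reparametrization to unit length, the final rescaling $fR$ with $f\equiv 1$ outside a ball, and the appeal to Theorem~\ref{correspondence}. Your added details --- the parity check guaranteeing $n\geq 3\,\dim N+2$, the Whitney embedding as the starting point for the isotopy, and the observation that the contact form may be kept equal to $\alpha_{std}$ away from $e(N)$ because the Reeb extension ultimately goes through Lemma~\ref{L.Inaba} (cf.\ the remark following it) so that the adapted metric is the round one there --- are all consistent with the paper's constructions.
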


\section{Steady Euler flows: proof of Theorems~\ref{T.main1} and~\ref{T.main2}}\label{S5}

Our goal in this section is to apply the results on Reeb embeddings  in Section~\ref{S3} to prove the main theorems stated in the Introduction on the universality of the stationary Euler equations.

\subsection{Non-autonomous dynamics and universality}\label{SS5}

Let $u_0(\cdot,t)$ be a non-autonomous vector field on a compact manifold $N$, and assume that it is $2\pi$ periodic in $t$. The suspension of $u_0$ on the manifold $N\times \mathbb S^1$ (with $\mathbb S^1=\mathbb R/(2\pi\mathbb Z)$) is another vector field defined as
$$ X(x,\theta):=  u_0(x,\theta)+\partial_\theta\,,$$
with $x\in N$ and $\theta\in \mathbb S^1$.

The vector field $X$ on $N\times \mathbb S^1$ is geodesible. Indeed, the closed $1$-form $\beta:= d\theta$ obviously satisfies that $\beta(X)= 1$ and $\iota_Xd\beta=0$, which implies that $X$ is geodesible, c.f. Subsection~\ref{SS.geod}. Now, applying Theorem~\ref{main2} to the pair $(N\times \mathbb S^1,X)$, we conclude that there exists a Reeb embedding $e:(N\times \mathbb S^1,X)\rightarrow (\mathbb S^n,\xi_{std})$ for the smallest odd integer $n\in \{3\dim\, N+5,3\dim\,N+6\}$. This means, c.f. Definition~\ref{d:reeb}, that there is a defining $1$-form $\alpha$ of $\xi_{std}$ whose Reeb field $R$ satisfies that $R|_{e(N\times \mathbb S^1)}=X$. { {Furthermore}, we can require that $\alpha=\alpha_{std}$ in the complement of a ball $B$ that contains $e(N\times \mathbb S^1)$, as per Remark \ref{rem:complementstd}.}

It follows from the Beltrami-Reeb correspondence Theorem~\ref{correspondence}, that $R$ is a Beltrami field (and hence a steady Euler flow) for some metric $g$ on $\mathbb S^n$. Moreover, since the adapted metric to the standard contact form on the sphere is the round metric $g_0$, it turns out that $g=g_0$ in the complement of $B\subset \mathbb S^n$.

Setting $u:=R$, the previous construction shows that any (time-periodic) non-autonomous dynamics $u_0$ is Euler-extendible, recall Definition~\ref{def1}.

The general case of a non-autonomous flow $u_0(\cdot,t)$ is analogous. The suspension manifold is $N\times \mathbb R$ and $X$ is defined as above with $\theta\in\R$. This vector field is geodesible, so proceeding as before we conclude that $u_0$ is Euler-extendible to $\R^n$, for the smallest odd integer $n\in\{3\dim\,N+5,3\dim\,N+6\}$. Note that in this case the adapted metric to the standard contact form on $\R^n$ is not the Euclidean one. This completes the proof of Theorem~\ref{T.main1}.

\begin{Remark}
When the extended manifold is $\mathbb S^n$, the steady Euler flow $u$ is equal to the Hopf field in the complement of $B$ (because the Hopf field is the Reeb field associated to the standard contact form). In the case that the extension is in $\R^n$, the vector field $u$ is the vertical field $\partial_{x_n}$ in the complement of a neighborhood of the non-compact manifold $e(N\times \R)$.
\end{Remark}

\begin{Remark}
When the vector field $u_0$ is autonomous and geodesible (not necessarily of unit length) we do not need to take the suspension of $u_0$. In this case we can directly apply Corollary~\ref{Cor:Belt} to conclude that $(N,u_0)$ can be embedded into $\mathbb S^n$, where $n$ is the smallest odd integer $n\in\{3\,\dim N+2,3\,\dim\,N+3\}$, so that $e_*u_0$ extends as a Beltrami field with constant proportionality factor on $\mathbb S^n$.
\end{Remark}

We conclude this subsection by proving Corollary~\ref{Cor:difeo}. The main idea is again a suspension construction, depicted in Figure \ref{sus}.

\begin{figure}[!h]
\begin{tikzpicture}
\node[anchor=south west,inner sep=0] at (0,0) {\includegraphics[scale=0.115]{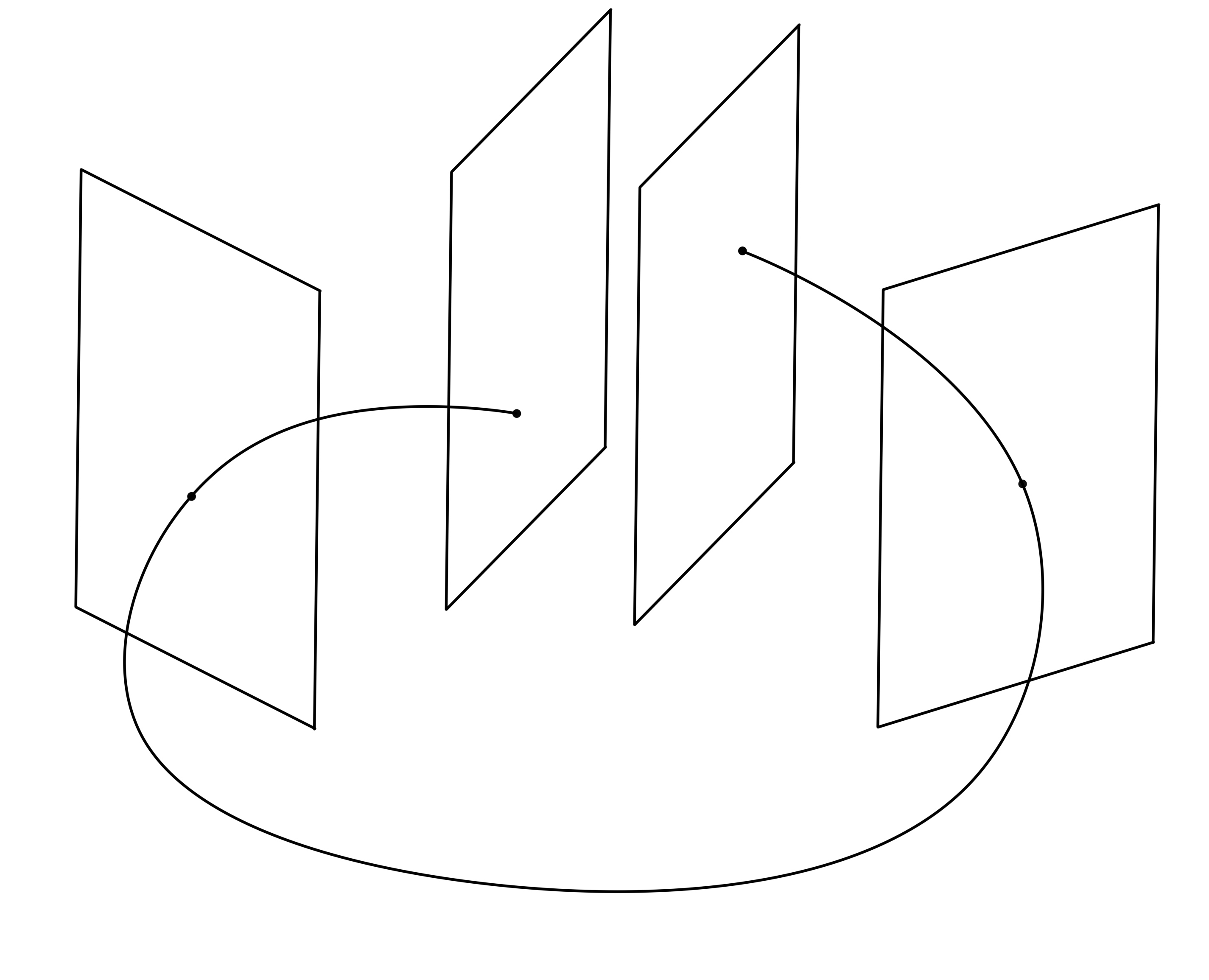}};
\node at (4.5,7.3) {$\phi(N)$};
\node at (5.2,7.3) {$\cong$};
\node at (5.7,7.33) {$N$};
\node at (5,5.25) {$x$};
\node at (3.7,3.7) {$\phi(x)$};
\end{tikzpicture}
\caption{Suspended diffeomorphism}
\label{sus}
\end{figure}
Indeed, let $\tilde N$ be the manifold defined as $ \tilde N:= N \times [0,1] / \sim $ where we identify $(x,0)$ with $(\phi(x),1)$. Consider the horizontal vector field $\partial_\theta$ on $N\times [0,1]$, where $\theta\in [0,1]$. This vector field immediately pushes down to another field on $\tilde N$ that we call $X$. Observe that $N$ is a cross section of $X$ and its (time-one) return map is conjugate to $\phi$. Arguing as before, we show that $X$ is geodesible and can be extended to a steady Euler flow on $\mathbb S^n$, $n\in \{3\dim\, N+5,3\dim\, N+6\}$, thus showing that $\phi$ is Euler-embeddable.

\subsection{Turing completeness}\label{SS5.2}

Let us first recall that a Turing machine is a $5$-tuple $(Q,q_0,F, \Sigma, \delta)$ where
\begin{itemize}
\item $Q$ is a finite non-empty set, the set of ``states''.
\item $q_0 \in Q$ is the initial state.
\item $F \in Q$ is the halting state.
\item $\Sigma$ is the alphabet, a finite set of cardinality at least two.
\item $\delta:(Q \backslash F) \times \Sigma \rightarrow Q \times \Sigma  \times \{L,N,R\}$ is a partial function called a transition function. We denote by $L$ the left shift, $R$ is the right shift and $N$ represents a ``no shift''.
\end{itemize}

Following Tao~\cite{T1}, we consider a Turing machine with a single tape that is infinite in both directions and a single halting state, with the machine shifting the tape rather than a tape head; in particular we do not need to isolate a blank symbol character in the alphabet (anyway, all the results here apply to other variants of a Turing machine). We denote by $q$ the current state, and $t=(t_n)_{n\in \mathbb{Z}}$ the current tape. For a given Turing machine $(Q,q_0,F,\Sigma,\delta)$ and an input tape $s=(s_n)_{n\in \mathbb{Z}}\in \Sigma^{\mathbb{Z}}$ the machine runs applying the following algorithm:
\begin{enumerate}
\item Set the current state $q$ as the initial state and the current tape $t$ as the input tape.
\item If the current state is $F$ then halt the algorithm and return $t$ as output. Otherwise compute $\delta(q,t_0)=(q',t_0',\varepsilon)$, with $\varepsilon \in \{L,R,N\}$.
\item Replace $q$ with $q'$ and $t_0$ with $t_0'$.
\item Replace $t$ by the $\varepsilon$-shifted tape, then return to step $(2)$.
\end{enumerate}
For any input the machine will halt at some point and return an output or run indefinitely. The Turing completeness of a dynamical system can be understood in terms of the concept of a \emph{universal Turing machine}, which is a machine that can simulate all Turing machines.

In \cite{CMPP2} we proved the following theorem:
\begin{theorem}\label{thm:main}
There exists an Eulerisable flow $X$ in $\mathbb S^3$ that is Turing complete in the following sense. For any integer $k\geq 0$, given a Turing machine $T$, an input tape $t$, and a finite string $(t_{-k}^*,...,t_k^*)$ of symbols of the alphabet, there exist an explicitly constructible point $p\in \mathbb S^3$ and an open set $U\subset \mathbb S^3$ such that the orbit of $X$ through $p$ intersects $U$ if and only if $T$ halts with an output tape whose positions $-k,...,k$ correspond to the symbols $t_{-k}^*,...,t_k^*$. The metric $g$ that makes $X$ a stationary solution of the Euler equations can be assumed to be the round metric in the complement of an embedded solid torus.
\end{theorem}

Now we prove Theorem~\ref{T.main2}, i.e., that there exists a steady Euler flow on $\mathbb S^{17}$ that is Turing complete using the former construction (Theorem \ref{main1} and its proof). Even if in Theorem~\ref{T.main2} the sphere is higher dimensional, the method of proof allows us to promote arbitrary Turing complete constructions to higher dimensional Eulerizable vector fields. The initial data in our proof can be seen as \emph{black boxes} that can be assembled to yield a whole family of ad-hoc brand-new constructions. A key feature of this theorem is that it is based on an $h$-principle which is algorithmic. In particular, to prove Theorem~\ref{T.main2} we take as initial datum an orientation-preserving diffeomorphism $\phi$ of the torus $\mathbb T^4$ that encodes a universal Turing machine in the following sense~\cite[Proposition 1.10]{T1}:

\begin{prop}\label{Prop:Turing}
There exists an explicitly constructible diffeomorphism $\phi:\mathbb T^4 \rightarrow \mathbb T^4$ such that for any Turing machine $(Q,q_0,F,\Sigma, \delta)$ there is an explicitly constructible open set $U_{t_{-n},...,t_n}\subset \mathbb T^4$ attached to each finite string $t_{-n},...,t_{n} \in \Sigma$, and an explicitly constructible point $y_s\in \mathbb T^4$ attached to each $s\in \Sigma^\mathbb{Z}$ such that the Turing machine with input tape $s$ halts with output $t_{-n},...,t_n$ in positions $-n,...,n$, respectively, if and only if the orbit $y_s,\phi(y_s),\phi^2(y_s),...$ enters $U_{t_{-n},...,t_{n}}$.
\end{prop}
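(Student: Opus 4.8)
The plan is to get the quantifier order exactly right: the single diffeomorphism $\phi$ must encode a \emph{universal} Turing machine, and every specific machine $(Q,q_0,F,\Sigma,\delta)$ is then handled not by modifying $\phi$ but by folding its description into the initial point $y_s$ and the target set $U$. So the first step is to fix, once and for all, a universal Turing machine $\mathcal{U}=(Q_{\mathcal U},q_0^{\mathcal U},F_{\mathcal U},\Sigma_{\mathcal U},\delta_{\mathcal U})$ with a single halting state and a bi-infinite tape. All the geometry below depends only on $\mathcal U$, hence on nothing but an absolute constant, which is what makes $\phi$ independent of the machine being simulated.

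Next I would encode the configurations of $\mathcal U$---pairs (current state, bi-infinite tape)---as points of a Cantor set $C\subset\mathbb{T}^4$. The bi-infinite tape over the finite alphabet $\Sigma_{\mathcal U}$ is recorded by two Cantor coordinates, one storing the symbols in positions $\geq 0$ and one those in positions $<0$, via base-$b$ expansions with $b>|\Sigma_{\mathcal U}|$ chosen large enough to leave gaps and thereby avoid carrying and digit ambiguity; the finite state and the bookkeeping needed for the shift are recorded in the remaining coordinates, again inside disjoint clopen boxes. In these coordinates one step of $\mathcal U$---read the symbol at position $0$, rewrite it, change state, and shift left/right/none---becomes a \emph{piecewise-affine bijection} of $C$: reading and writing a leading digit together with a shift amounts to multiplying or dividing a Cantor coordinate by $b$ and swapping finitely many clopen pieces. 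Thus the combinatorial dynamics of $\mathcal U$ is realized, on $C$, by finitely many orientation-preserving affine maps between disjoint boxes.

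The technically demanding step, and the one I expect to be the main obstacle, is to extend this piecewise-affine map on $C$ to a genuine $C^\infty$ diffeomorphism $\phi$ of the compact manifold $\mathbb{T}^4$ that leaves $C$ invariant and restricts on $C$ to the map above. The point of working on $\mathbb{T}^4$ (rather than in low dimension) is precisely that there is enough room to do this: each affine piece sends one small box orientation-preservingly onto another, and since any two such boxes are joined by an ambient isotopy of the torus supported in their complement, one can interpolate the finitely many affine pieces by a smooth diffeomorphism that agrees with them on neighbourhoods of the relevant clopen subsets of $C$ and is arbitrary away from $C$. Care is needed to make the finitely many pieces globally compatible---orientations must match and the interpolating regions must be pairwise disjoint---but this is a finite, local problem and fixes $\phi$ once and for all.

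Finally I would carry out the reduction that yields the correct ``there exists $\phi$, for all machines'' statement. Given any Turing machine $M$ and any input $s\in\Sigma^{\mathbb Z}$, universality of $\mathcal U$ furnishes a computable description $\langle M\rangle$ such that $\mathcal U$ launched from the configuration encoding $\langle M\rangle$ together with $s$ simulates $M$ on $s$, halting precisely when $M$ halts and with a decodable copy of $M$'s output tape. I would then set $y_s\in\mathbb{T}^4$ to be the point of $C$ encoding this initial configuration---so $y_s$ packages the \emph{fixed} prefix $\langle M\rangle$ and the \emph{varying} input $s$, and $s\mapsto y_s$ is explicitly constructible---and take $U_{t_{-n},\dots,t_n}$ to be a small open neighbourhood in $\mathbb{T}^4$ of the sub-Cantor set of halting configurations of $\mathcal U$ whose decoded output has $t_{-n},\dots,t_n$ in positions $-n,\dots,n$; because distinct configurations occupy disjoint clopen boxes and the halting state sits in its own boxes, this neighbourhood can be arranged to meet the orbit of $y_s$ only at such a halting configuration (transient configurations with the right symbols but a non-halting state are excluded). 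Since $\phi|_C$ is conjugate to the dynamics of $\mathcal U$, the forward orbit $y_s,\phi(y_s),\phi^2(y_s),\dots$ enters $U_{t_{-n},\dots,t_n}$ if and only if the simulation---equivalently $M$ on $s$---halts with that output, which is exactly the asserted equivalence. Crucially the same $\phi$ serves every $M$, giving the stated quantifier order.
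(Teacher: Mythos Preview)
The paper does not supply its own proof of this proposition: it is quoted verbatim as \cite[Proposition~1.10]{T1} (Tao, \emph{On the universality of potential well dynamics}), and the authors rely on it as a black box to build the suspension and apply the Reeb embedding machinery. So there is no ``paper's proof'' to compare against here.

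That said, your outline is precisely the strategy Tao uses in~\cite{T1}: fix a single universal machine~$\mathcal U$, encode its configurations by base-$b$ Cantor coordinates in a product of intervals (hence in~$\mathbb T^4$), realise one computation step as a finite collection of orientation-preserving affine bijections between disjoint boxes, and then smoothly extend and glue these pieces into a global diffeomorphism. The reduction you describe---folding the description~$\langle M\rangle$ of an arbitrary machine into the initial point~$y_s$ so that a single~$\phi$ serves every machine---is exactly how the quantifier order is achieved. Two points you would need to make more precise in a full write-up: (i) the behaviour of~$\phi$ on halting configurations (Tao makes the halting state essentially a fixed locus so that the orbit, once it enters the halting box, does not wander into spurious~$U$'s), and (ii) the smooth extension of the piecewise-affine map, which you correctly flag as the delicate step; Tao handles it by working with sufficiently separated boxes and interpolating with compactly supported bump constructions, and the dimension~$4$ is what gives enough room. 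Your sketch is faithful to the original argument.
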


Let us now prove Theorem~\ref{T.main2} using that the diffeomorphism $\phi:\mathbb T^4\to \mathbb T^4$ that encodes a universal Turing machine constructed in Proposition~\ref{Prop:Turing} can be Euler-embedded in $\mathbb S^{17}$, c.f. Corollary~\ref{Cor:difeo}. More precisely, let $N$ be the $5$-dimensional manifold defined as $N:= \mathbb T^4 \times [0,1] / \sim $, where we identify $(x,0)$ with $(\phi(x),1)$, and $X$ the vector field on $N$ obtained by pushing down the horizontal vector field $\partial_\theta$ on $\mathbb T^4\times [0,1]$ ($\theta\in [0,1]$). { This vector field is geodesible, so by Theorem \ref{main2} there exists a Reeb embedding $e:(N,X)\to \mathbb (S^{17}, \xi_{std})$, which is a stationary solution to the Euler equations $u$ on $\mathbb S^{17}$ of Beltrami type satisfying $u|_{e(N)}=X$}. The { embedding} $e$ is explicitly constructible because the applied $h$-principle is algorithmic; also the vector field $u$ is constructible, because it is the Reeb field of a defining contact form $\alpha$ of $\xi_{std}$, which is also algorithmic (see the proof of Theorem~\ref{main2}). { The fact that these embeddings are algorithmically constructible is a key feature to deduce the existence of a Turing complete Beltrami field. Indeed, the capacity of simulating a Turing machine requires that the initial point associated with an input of the machine is constructible (i.e., computable by a Turing machine). }

In view of the previous discussion, let us take a point $\tilde y_s\in \mathbb S^{17}$ as the image of the point $y_s\times \{0\}\in N$ under the embedding $e$, and a neighborhood $\tilde U_{t_{-n},...,t_n}\subset \mathbb S^{17}$ as a neighborhood in $\mathbb S^{17}$ of the image of the open set $U_{t_{-n},...,t_n} \times \{ 0 \}\subset N$ under the embedding $e$. Then, Theorem~\ref{T.main2} can be restated in a more precise way as follows:

\begin{theorem*}[Beltrami fields are Turing complete]
There exists a Beltrami field $u$ on $\mathbb S^{17}$ for some Riemannian metric $g$ such that for any Turing machine $(Q,q_0,F,\Sigma, \delta)$ there is an explicitly constructible open set $U_{t_{-n},...,t_n}\subset \mathbb T^4$ attached to each finite string $t_{-n},...,t_{n} \in \Sigma$ and an explicitly constructible point $y_s\in \mathbb T^4$ attached to each $s\in \Sigma^\mathbb{Z}$ such that the Turing machine with input tape $s$ halts with output $t_{-n},...,t_n$ in positions $-n,...,n$ respectively if and only if the trajectory of $u$ with initial datum $\tilde y_s$ enters $\tilde U_{t_{-n},...,t_n}$.
\end{theorem*}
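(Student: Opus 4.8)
The plan is to lift Tao's discrete universality statement (Proposition~\ref{Prop:Turing}) to the continuous Beltrami flow $u$ produced by Corollary~\ref{Cor:difeo}, exploiting that the suspension encodes the iterates of $\phi$ as successive crossings of a global cross section.

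First I would apply the construction in the proof of Corollary~\ref{Cor:difeo} to Tao's universal diffeomorphism $\phi:\mathbb T^4\to\mathbb T^4$. This produces the mapping torus $N:=\mathbb T^4\times[0,1]/\!\sim$ of dimension $5$, the suspension vector field $X$ on $N$, an embedding $e:N\to\mathbb S^{17}$ (the dimension being $17=3\dim\mathbb T^4+5$, as given by Corollary~\ref{Cor:difeo}), and, by the Beltrami--Reeb correspondence (Theorem~\ref{correspondence}), a Beltrami field $u$ on $\mathbb S^{17}$ for some metric $g$ such that $e(N)$ is an invariant submanifold and $u|_{e(N)}=e_*X$. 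By construction $\mathbb T^4\times\{0\}$ is a global cross section of $X$ whose time-one return map is exactly $\phi$; hence the integral curve of $X$ issuing from $(y_s,0)$ meets this section precisely at the points $(\phi^k(y_s),0)$, $k=0,1,2,\dots$.

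Next I would transport this picture to $\mathbb S^{17}$. Since $e(N)$ is $u$-invariant and $e_*X=u$ there, the trajectory of $u$ with initial datum $\tilde y_s=e(y_s,0)$ remains in the compact set $e(N)$ (so it is automatically bounded, as required by the reachability problem) and intersects $e(\mathbb T^4\times\{0\})$ exactly at the points $e(\phi^k(y_s),0)$. It then remains to pin down the neighborhood $\tilde U_{t_{-n},\dots,t_n}$ so that the continuous trajectory enters it if and only if some iterate lands in $U_{t_{-n},\dots,t_n}$. To this end I would take $\tilde U_{t_{-n},\dots,t_n}$ to be a flow-box neighborhood of $e(U_{t_{-n},\dots,t_n}\times\{0\})$, namely a neighborhood in $\mathbb S^{17}$ whose intersection with $e(N)$ equals $e\big(U_{t_{-n},\dots,t_n}\times(-\varepsilon,\varepsilon)\big)$ for a sufficiently small $\varepsilon>0$, where the second coordinate measures flow time transverse to the section. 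Because the trajectory crosses the section transversally, it lies in this box during the windows $(k-\varepsilon,k+\varepsilon)$ precisely when $\phi^k(y_s)\in U_{t_{-n},\dots,t_n}$.

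Combining these observations with Proposition~\ref{Prop:Turing} closes the argument: the orbit $y_s,\phi(y_s),\phi^2(y_s),\dots$ enters $U_{t_{-n},\dots,t_n}$ if and only if the Turing machine with input $s$ halts with output $t_{-n},\dots,t_n$, and by the previous paragraph this occurs if and only if the trajectory of $u$ through $\tilde y_s$ enters $\tilde U_{t_{-n},\dots,t_n}$. As $\phi$ simulates a universal Turing machine, the Beltrami field $u$ is Turing complete. I expect the only genuinely delicate point to be the ``only if'' direction of the last equivalence: one must ensure $\tilde U_{t_{-n},\dots,t_n}$ is thin enough in the flow direction that the continuous trajectory enters it solely within the return-time windows above, and not at spurious times. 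This is exactly what the flow-box (a tubular neighborhood transverse to the section) construction guarantees, and it is also where one crucially uses that $u=e_*X$ coincides with the suspension flow along $e(N)$ rather than being merely tangent to it.
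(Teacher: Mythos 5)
Your proposal is correct and follows essentially the same route as the paper: suspend Tao's universal diffeomorphism $\phi:\mathbb T^4\to\mathbb T^4$ to a geodesible field on the mapping torus, Euler-embed it in $\mathbb S^{17}$ via Corollary~\ref{Cor:difeo} (i.e. Theorem~\ref{main2} plus the Beltrami--Reeb correspondence), and transfer the reachability statement of Proposition~\ref{Prop:Turing} through the embedding. Your explicit flow-box choice of $\tilde U_{t_{-n},\dots,t_n}$ (intersecting $e(N)$ exactly in $e(U_{t_{-n},\dots,t_n}\times(-\varepsilon,\varepsilon))$) is a slightly more careful rendering of the paper's ``neighborhood of $e(U_{t_{-n},\dots,t_n}\times\{0\})$'', and correctly handles the ``only if'' direction.
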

\begin{Remark}
The metric $g$ in this theorem is the canonical metric of $\mathbb S^{17}$ in the complement of a neighborhood of $e(N)$. In fact, we observe that the dimension of the target sphere may be reduced by $2$ in all the applications of this section by considering a sphere with an overtwisted contact structure. In this case we would obtain a Turing complete Euler flow in $\mathbb S^{15}$, but we cannot longer guarantee that the metric $g$ is the canonical one in the complement of a neighborhood of $e(N)$. This can be easily fixed if one starts with the standard tight contact structure on the sphere and make it overtwisted on a small ball $U\subset\mathbb S^{15}$ using the relative $h$-principle for overtwisted contact structures \cite{BEM}, so that the contact form is the standard one in $\mathbb S^{15}\setminus U$. Then, taking an embedding $e:N\to \mathbb S^{15}$ with $e(N)\subset U$, Theorem~\ref{main2} can be applied with the overtwisted target manifold $M\equiv U$, and therefore the resulting Reeb embedding $\widetilde e$ isotopic to $e$ can be taken such that $\widetilde e(N)\subset U$ (although it is no longer $C^0$-close to $e$). The metric $g$ can then be chosen to be the canonical one outside $U$ because the contact form coincides with the standard one in $\mathbb S^{15}\setminus U$ by construction.
\end{Remark}

\subsection{The existence of a universal solution in $\mathbb R^{n}$}

Using the ideas developed in this article, we can show that there exists an Eulerisable flow in $\mathbb R^n$ which, in some sense, exhibits all possible lower-dimensional dynamics. To be more precise, let us introduce the following definitions:
\begin{defi}
Given two vector fields $X_1$ and $X_2$ in $N \times \mathbb S^1$, where $N$ is a compact manifold, we say that $X_1$ is \emph{$(\varepsilon,k)$-conjugate to $X_2$} if there is a diffeomorphism $\varphi:N\times \mathbb S^1 \rightarrow N \times \mathbb S^1$ such that
$$ ||\varphi_*X_1 - X_2 ||_{C^k(N\times \mathbb S^1)} < \varepsilon. $$
\end{defi}

\begin{defi}
Fix a positive integer $k$. A vector field $u$ in $\mathbb R^n$ is \emph{$N$-universal} if for any $\varepsilon$ and any vector field $X$ on $N$ there is an \emph{invariant} submanifold $\tilde N$ of $u$ diffeomorphic to $N\times \mathbb S^1$ such that $u|_{\tilde N}$ is $(\varepsilon,k)$-conjugate to $X+\partial_\theta$ with $\theta\in \mathbb S^1$.
\end{defi}

\begin{theorem}\label{Teo_univ}
Let $N$ be a compact manifold. There exists an $N$-universal Eulerisable flow of Beltrami type in $\R^n$, where the dimension is the smallest odd integer $n\in\{3\dim\, N+5,3\dim\, N+6\}$.
\end{theorem}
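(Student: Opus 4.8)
The plan is to realize, inside a single Beltrami field on $\mathbb{R}^n$, a countable family of invariant suspensions that is $C^k$-dense among all possible dynamics, and then to invoke density to reach an arbitrary $X$ up to $(\varepsilon,k)$-conjugacy. First I would use separability: the space of smooth vector fields on the compact manifold $N$, endowed with the $C^k$ topology, is a separable metric space, so I can fix a countable family $\{X_i\}_{i\geq 1}$ of smooth fields on $N$ that is dense in the $C^k$ topology. For each $i$, the suspension $X_i+\partial_\theta$ on $N\times\mathbb{S}^1$ is geodesible with defining $1$-form $\beta=d\theta$, exactly as in the suspension argument used in the proof of Theorem~\ref{T.main1}.

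The core construction is to assemble the countably many localized Reeb extensions into one global contact form. Since $n\geq 3\dim N+5=3\dim(N\times\mathbb{S}^1)+2$, the first bullet of Theorem~\ref{main2} applies to each pair $(N\times\mathbb{S}^1,\,X_i+\partial_\theta)$. I would first choose a locally finite family of pairwise disjoint open balls $B_i\subset\mathbb{R}^n$ (possible by noncompactness, e.g. centered along a divergent sequence of points), together with a chosen embedding of $N\times\mathbb{S}^1$ into $B_i$. Applying Theorem~\ref{main2} to deform this embedding inside $B_i$ yields a Reeb embedding $e_i:(N\times\mathbb{S}^1,X_i+\partial_\theta)\hookrightarrow B_i$ together with a defining contact form $\alpha_i$ whose Reeb field restricts to $e_{i*}(X_i+\partial_\theta)$ on $\tilde N_i:=e_i(N\times\mathbb{S}^1)$; just as in the proof of Theorem~\ref{T.main1} (using the localization in the remark after Lemma~\ref{L.Inaba}), this $\alpha_i$ can be taken equal to the standard contact form $\alpha_{std}$ outside a compact subset of $B_i$. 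Because the modifications are supported in the disjoint $B_i$ and the family is locally finite, setting $\alpha:=\alpha_{std}$ outside $\bigcup_i B_i$ and $\alpha:=\alpha_i$ on each $B_i$ defines a smooth global contact form on $\mathbb{R}^n$.

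Let $u:=R$ be the Reeb field of $\alpha$. Choosing a global almost-complex structure on $\ker\alpha$ adapted to $d\alpha$, together with a positive function selected by a partition of unity as in the proof of Theorem~\ref{correspondence}, produces an adapted Riemannian metric for which $u$ is a nonvanishing Beltrami field with constant proportionality factor $f=1$, hence an Eulerisable flow of Beltrami type; the metric can be taken to agree with the one adapted to $\alpha_{std}$ outside $\bigcup_i B_i$. By construction each $\tilde N_i$ is an invariant submanifold of $u$ diffeomorphic to $N\times\mathbb{S}^1$, and the identification $e_i$ conjugates $u|_{\tilde N_i}$ to $X_i+\partial_\theta$. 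Given now $X$ on $N$ and $\varepsilon>0$, density provides some $X_i$ with $\|X_i-X\|_{C^k(N)}<\varepsilon$; taking $\tilde N:=\tilde N_i$ and $\varphi:=e_i^{-1}$ (composed with the fixed identification $\tilde N_i\cong N\times\mathbb{S}^1$) gives $\varphi_*(u|_{\tilde N})=X_i+\partial_\theta$, so that $\|\varphi_*(u|_{\tilde N})-(X+\partial_\theta)\|_{C^k(N\times\mathbb{S}^1)}=\|X_i-X\|_{C^k(N)}<\varepsilon$, which is exactly the required $(\varepsilon,k)$-conjugacy.

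The hard part will be the gluing step: one must guarantee that infinitely many independent applications of Theorem~\ref{main2}, each supported in its own ball, patch into a single globally smooth contact form whose Reeb flow is complete and genuinely contact everywhere. Local finiteness of the $B_i$ secures global smoothness of $\alpha$ and of the adapted metric, while completeness of $u$ follows because it is an autonomous stationary Euler flow. The delicate point is to perform each localized extension with its nonstandard part confined strictly inside $B_i$; this is precisely the $C^0$-closeness and localization furnished by Theorem~\ref{main2}, so with that control in hand the construction closes.
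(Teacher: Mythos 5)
Your proposal is correct and follows essentially the same route as the paper's proof: a countable $C^k$-dense family of fields on $N$ (by separability), suspension of each to a geodesible field on $N\times\mathbb S^1$, Reeb embeddings via Theorem~\ref{main2} into pairwise disjoint standard contact balls with each contact form standard near the boundary, gluing into a global contact form on $\R^n$, and the Reeb--Beltrami correspondence plus density to conclude. Your explicit insistence on local finiteness of the balls and confinement of each modification strictly inside $B_i$ is exactly the (implicit) care the paper takes with its disjoint-closure contact balls.
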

\begin{proof}
We first recall that the space $\mathfrak{X}(N)$ of smooth vector fields on $N$ is second countable with the Whitney topology~\cite[Chapter 2.1]{Hi}. In particular, it is separable and hence there is a countable set of vector fields $\{ X_j \}_{j\in \mathbb{Z}}$ which is dense in $\mathfrak{X}(N)$. For every pair $(N,X_j)$, we can take the suspension of the vector field $X_j$ as in Subsection~\ref{SS5} to obtain a countable set of pairs $(N_j, Y_j)$ where $N_j$ is diffeomorphic to $N \times \mathbb S^1$ and $Y_j:= X_j+\partial_\theta$ is a geodesible flow. Now take a countable collection of contact balls $(U_j,\xi_{std})\subset \mathbb R^n$ with pairwise disjoint closures, where $n$ is the smallest odd integer $n\in\{3\dim\, N+5,3\dim\, N+6\}$. By Theorem~\ref{main2} there exists an embedding $e_j$ of $(N_j, Y_j)$ for each $j\in\mathbb Z$ into $(U_j,\xi_{std})$ such that there is a defining contact form $\alpha_j$ whose Reeb field $R_j$ on $e_j(N_j)$ restricts to $Y_j$. Observe that we can take $\alpha_j=\alpha_{std}$, the standard contact form, in a neighborhood of the boundary $\partial U_j$. This allows us to define a smooth global contact form $\alpha$ on $\mathbb R^n$ by setting $\alpha:=\alpha_j$ on each $U_j$ and $\alpha:=\alpha_{std}$ on $\mathbb R^n\backslash \bigcup_{j\in\mathbb Z} U_j$; it is obvious that the Reeb field $R$ associated to $\alpha$ satisfies $R|_{e_j(N_j)}=Y_j$ for all $j$.

Fixing an integer $k$, it follows from the previous construction that for any vector field $X\in \mathfrak{X}(N)$ and any $\varepsilon>0$, there exists $j_0\in\mathbb Z$ { so that $X_{j_0}$ is $(\varepsilon,k)$-conjugate to $X$.} Then $e_{j_0}(N_{j_0})$ is an invariant submanifold of $R$, and $R|_{e_{j_0}(N_{j_0})}=Y_{j_0}$. Accordingly, $Y_{j_0}$ is $(\varepsilon,k)$-conjugate to $X+\partial_{\theta}$. Since any Reeb field is an Eulerisable flow of Beltrami type (c.f. Section~\ref{SS.Belt}), the theorem follows.	
\end{proof}

The method of proof of Theorem~\ref{Teo_univ} allows us to provide a different proof of a theorem of Etnyre and Ghrist in~\cite{EG2}. Specifically, we can show that there exists an Eulerisable flow in $\R^3$ exhibiting periodic integral curves of all possible knot and link types; when the Riemannian metric of $\mathbb R^3$ is fixed and analytic, this result was proved in~\cite{EP1}.
\begin{Corollary}
There exists an Eulerisable flow of Beltrami type in $\R^3$ exhibiting stream lines of all possible knot and link types.
\end{Corollary}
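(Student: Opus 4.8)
The plan is to realize every knot and link type as a single closed orbit (or finite collection of orbits) of a geodesible field on a suspension manifold, and then feed that field into the machinery of Corollary~\ref{Cor:Belt} to obtain a Beltrami field on a sphere, which after removing a point yields the desired flow on $\R^3$. The key point is that, by the Remark following the proof of Theorem~\ref{main1} (the non-compact case) and by Corollary~\ref{Cor:Belt}, any geodesible field on a compact manifold $N$ embeds as a Beltrami field, and the ambient dimension is governed by $\dim N$. So the entire problem reduces to exhibiting a single compact model $(N,X)$ with $X$ geodesible whose orbits, once embedded into $\R^3$, realize all knots and links simultaneously. Since we want the target to be exactly $\R^3$, we cannot invoke the dimension formula $3\dim N+5$ directly; instead I would argue that one can take $N=\mathbb S^1$ (or a disjoint union of circles) together with the field $X=\partial_\theta$, which is trivially geodesible via $\beta=d\theta$, and realize prescribed knot types by the \emph{choice of embedding} rather than by increasing the dimension.

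The core steps, in order, are as follows. First I would recall that a geodesible flow $(N,X)$ admits a Reeb embedding into $(\R^n,\xi_{std})$ for the appropriate $n$ (Corollary~\ref{Beltr} together with Remark~\ref{open}), and that by Theorem~\ref{correspondence} the extended Reeb field becomes a Beltrami field for an adapted metric, equal to the standard model outside a ball. Second, exactly as in the proof of Theorem~\ref{Teo_univ}, I would exploit the fact that there are only countably many knot and link types (knots and links up to isotopy form a countable set, being classified by finite combinatorial data such as diagrams), enumerate them as a sequence $\{K_j\}_{j\in\mathbb Z}$, and take a countable collection of pairwise-disjoint standard contact balls $U_j\subset \R^3$. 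Third, inside each ball $U_j$ I would place a closed geodesible orbit whose ambient isotopy type in $\R^3$ is the prescribed knot or link $K_j$; since an individual embedded circle carrying the field $\partial_\theta$ is a Reeb orbit in a Darboux ball (this is precisely the content of Lemma~\ref{L.Inaba} applied to $N=\mathbb S^1$, as the transversality and preservation of $TN\cap\xi$ are automatic for a one-dimensional $N$), each $K_j$ can be realized as a periodic Reeb orbit for a contact form $\alpha_j$ that equals $\alpha_{std}$ near $\partial U_j$. Fourth, I would glue all the $\alpha_j$ into a single global contact form $\alpha$ on $\R^3$, setting $\alpha=\alpha_{std}$ on the complement of $\bigcup_j U_j$, and then invoke the correspondence of Subsection~\ref{SS.Belt} to convert the associated Reeb field $R$ into a Beltrami field of constant proportionality factor, yielding the desired Eulerisable flow.

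The main obstacle I expect is the third step: ensuring that an arbitrary knot or link type $K_j$ can genuinely be realized as a \emph{closed Reeb orbit} inside a Darboux ball, not merely as an embedded circle. In the three-dimensional setting this is exactly where the contact-topological content lives, and one must check that the freedom in choosing the embedding of $\mathbb S^1$ (up to the $C^0$-closeness and isotopy flexibility supplied by the Reeb embedding theorem) is enough to attain every ambient isotopy class. Here I would lean on the fact that a knot in $\R^3$ can be placed in transverse or Legendrian-adjacent position so that it becomes a Reeb orbit for a suitably modified contact form supported in the ball; the $h$-principle flexibility in Theorem~\ref{main2} guarantees that the embedding of the geodesible circle can be isotoped to a Reeb embedding while staying $C^0$-close, and the $C^0$-closeness preserves the knot type. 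The remaining steps are routine: the countability of knot/link types is classical, the disjoint-ball packing and partition-of-unity gluing are identical to the construction in Theorem~\ref{Teo_univ}, and the final passage to a Beltrami field is verbatim the Beltrami--Reeb correspondence. Since the construction proceeds exactly as in the universal-flow theorem, I would simply remark that the proof is analogous to that of Theorem~\ref{Teo_univ}, with knot and link types playing the role of the dense sequence of vector fields.
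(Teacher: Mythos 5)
Your overall architecture --- countability of knot and link types, pairwise disjoint Darboux balls, Lemma~\ref{L.Inaba} applied to each circle carrying $\partial_\theta$, gluing the local contact forms as in Theorem~\ref{Teo_univ}, and the Beltrami--Reeb correspondence --- is exactly the paper's proof. However, your justification of the crucial third step contains a genuine error. You assert that ``transversality and preservation of $TN\cap\xi$ are automatic for a one-dimensional $N$''. Only the second half is true: \emph{once} the circle is transverse to $\xi_{std}$, the intersection $TL_j\cap\xi_{std}$ is the zero bundle and the preservation condition is vacuous. Transversality itself is not automatic: the function $\alpha_{std}(\dot\gamma)$ along a parametrized circle $\gamma$ can vanish, and indeed Legendrian knots are everywhere \emph{tangent} to $\xi_{std}$. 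Since positive transversality is an explicit hypothesis of Lemma~\ref{L.Inaba}, this step requires an argument.

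Your fallback, the $h$-principle of Theorem~\ref{main2}, cannot supply that argument: its hypotheses require $\dim M\geq 3\dim N+2=5$ in the general case, or $\dim M\geq 3\dim N$ with $M$ overtwisted; here $\dim M=3$ and $(\R^3,\xi_{std})$ is tight, so the theorem is inapplicable. What actually closes the gap --- and what the paper invokes, with a reference to Geiges' book --- is the classical three-dimensional fact that any link admits a $C^0$-small isotopy making it positively transverse to the standard contact structure (transverse approximation of links). You do gesture at this (``a knot in $\R^3$ can be placed in transverse \dots\ position''), but you muddle it with ``Legendrian-adjacent position'', which is the opposite of what is needed: a Reeb orbit is positively transverse to the contact distribution, never tangent to it. With the transverse approximation theorem correctly invoked in place of Theorem~\ref{main2}, your construction coincides with the paper's and is complete.
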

\begin{proof}
The set of all knot and link types of smoothly embedded circles in $\mathbb R^3$ is known to be countable. Let us now embed a representative $L_j$ of each knot and link type in pairwise disjoint Darboux balls $(U_j,\xi_{std})\subset\mathbb R^3$ as in the proof of Theorem~\ref{Teo_univ}. Then, for all $j\in\mathbb Z$, there is an isotopy of the link $L_j$, $C^0$-close to the identity, which makes it positively transverse to $\xi_{std}$, see e.g.~\cite{Ge}. For the ease of notation, we still denote the deformed link by $L_j$. Applying Lemma~\ref{L.Inaba} to each $L_j$ endowed with the vector field $X_j:=\partial_\theta$, where $\theta\in\mathbb S^1$ parametrizes $L_j$, we conclude that there is a contact form $\alpha$ in $\mathbb R^3$ whose Reeb vector field contains periodic orbits of all possible knot and link types. (Note that the condition that $X_j$ preserves $TL_j\cap \xi_{std}$ is trivially satisfied in this case.) The statement then follows using the correspondence between Reeb flows and Beltrami fields.
\end{proof}

\subsection{Even dimensional Euler flows}
In all the constructions that we have used to prove Theorems~\ref{T.main1} and~\ref{T.main2}, the ambient manifold is odd dimensional because we exploit the connection between hydrodynamics and contact geometry. We finish this section with a result that allows us to establish the universality of the Euler flows also for even dimensional ambient manifolds. The main observation, which is the even dimensional analog of Theorem~\ref{main1}, is the following proposition:

\begin{prop}\label{prop:even}
Let $N$ be a compact manifold endowed with a geodesible flow $X$. Then there exists an embedding $e:(N,X)\rightarrow \mathbb S^{n}\times \mathbb S^1$ with $n$ the smallest odd integer $n\in\{3\dim\, N+2,3\dim\, N+3\}$, and an Eulerisable field $u$ on $\mathbb S^{n}\times \mathbb S^1$ such that $u|_{e(N)}=X$.
\end{prop}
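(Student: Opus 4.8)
The plan is to reduce the even-dimensional statement to the odd-dimensional Beltrami extension already furnished by Corollary~\ref{Cor:Belt}, and then to ``add a flat circle factor'', exploiting the elementary fact that a steady Euler flow on $(M,g)$ extends trivially to a steady Euler flow on the Riemannian product $(M\times\mathbb S^1,\,g\oplus d\theta^2)$.

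First I would apply Corollary~\ref{Cor:Belt} to the geodesible pair $(N,X)$. Since $n$ is the smallest odd integer in $\{3\dim N+2,3\dim N+3\}$, this yields an embedding $e_0:N\rightarrow\mathbb S^n$ together with a nonvanishing Beltrami field $u_0$ on $\mathbb S^n$, of constant proportionality factor for a Riemannian metric $g_n$ (which is the canonical metric outside a ball containing $e_0(N)$), such that $u_0|_{e_0(N)}=e_{0*}X$. Being a Beltrami field, $u_0$ is a stationary solution of the Euler equations on $(\mathbb S^n,g_n)$: it is divergence-free and satisfies $\nabla_{u_0}u_0=-\nabla p_0$ for some pressure $p_0$ (with constant Bernoulli function).

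Next I would pass to $\mathbb S^n\times\mathbb S^1$, which has even dimension $n+1$, define the embedding $e(x):=(e_0(x),\theta_0)$ for a fixed $\theta_0\in\mathbb S^1$, and equip the product with the product metric $g:=g_n\oplus d\theta^2$. Let $u$ denote the trivial extension of $u_0$, namely the field with the same $\mathbb S^n$-components as $u_0$, with no $\partial_\theta$-component and no dependence on $\theta$. Because $g$ is a product with $g_{\theta\theta}\equiv 1$, every Christoffel symbol involving the $\theta$-direction vanishes and $\sqrt{\det g}=\sqrt{\det g_n}$; hence $\Div_g u=\Div_{g_n}u_0=0$ and $\nabla_u u=\nabla_{u_0}u_0=-\nabla\tilde p_0$, where $\tilde p_0(x,\theta):=p_0(x)$ is $\theta$-independent. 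Thus $u$ is a volume-preserving field solving the stationary Euler equations~\eqref{euler_stat} for the compatible metric $g$, i.e.\ it is Eulerisable, and by construction $u|_{e(N)}=e_*X$. Since $u$ has no $\partial_\theta$-component and is tangent to $e(N)$ along $e(N)$, the submanifold $e(N)$ is invariant, completing the proof.

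The heavy lifting is carried out in Corollary~\ref{Cor:Belt} (hence in Theorem~\ref{main2}); the only genuinely new ingredient is the observation that forming a Riemannian product with a flat circle preserves the steady Euler property. The single point requiring care is precisely this last claim, but it reduces to the vanishing of the mixed Christoffel symbols of a product metric together with the fact that the extended field $u$ and the extended pressure $\tilde p_0$ are $\theta$-independent, so that the Euler equations decouple from the circle direction. I therefore expect no substantive obstacle beyond this routine verification.
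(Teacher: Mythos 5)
Your proposal is correct and follows essentially the same route as the paper: invoke the odd-dimensional Reeb/Beltrami extension result on $\mathbb S^n$, then extend the field trivially (no $\partial_\theta$-component, $\theta$-independent) to $\mathbb S^n\times\mathbb S^1$ equipped with the product metric $g=\tilde g+d\theta^2$. The only difference is cosmetic: the paper verifies the stationary Euler equations for the extended field via the dual $1$-form formulation ($\iota_u g=\pi^*\tilde\alpha$, $\iota_u d\alpha=0$, and preservation of $\mu_{\tilde g}\wedge d\theta$), whereas you verify it with the Christoffel-symbol computation for a product metric — both are routine and equivalent.
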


\begin{proof}
Applying Theorem~\ref{main2} we obtain an embedding $\tilde e:(N,X)\rightarrow \mathbb (S^n,\xi_{std})$ and a defining contact form $\tilde \alpha$ whose Reeb vector field $R$ restricts to $X$ on $\tilde e(N)$. By the correspondence Theorem~\ref{correspondence}, the field $R$ is a Beltrami field with constant proportionality factor for some Riemannian metric $\tilde g$ on $\mathbb S^n$. Consider now the $(n+1)$-manifold $S^{n}\times S^1$ endowed with the Riemannian metric $g:= \tilde g + d\theta^2$, $\theta\in\mathbb S^1$, and define the trivial extension of the Reeb flow $R$ as the vector field $u:=(R,0)$ on $\mathbb S^n\times \mathbb S^1$. The dual $1$-form of $u$ using the metric $g$ is
$$\alpha= \iota_u g= \pi^*\tilde\alpha\,, $$
where $\pi$ is the canonical projection $\pi:\mathbb S^{n}\times \mathbb S^1 \rightarrow \mathbb S^{n}$. Accordingly, $\iota_ud \alpha=\pi^*(i_Rd\tilde\alpha)=0$ and $u$ preserves the (Riemannian) volume form $\mu=\mu_{\tilde g}\wedge d\theta$. Defining the embedding $e:N\rightarrow \mathbb S^n\times \mathbb S^1$ of $N$ as $e:=i\circ \tilde e$, where $i$ is the natural inclusion of $\mathbb S^n$ into $\mathbb S^n\times \mathbb S^1$, we conclude that $u$ is a steady Euler flow on $\mathbb S^n\times \mathbb S^1$ such that $u|_{e(N)}=X$.
\end{proof}

The proof of Theorems~\ref{T.main1} and~\ref{T.main2} for even dimensional ambient manifolds is then the same, mutatis mutandis, as in Subsections~\ref{SS5} and~\ref{SS5.2}, but invoking Proposition~\ref{prop:even} instead of Theorem~\ref{main2}.

\section{Flexibility of Reeb embeddings}\label{S4}

The goal of this section is to prove the Reeb embedding Theorem~\ref{main2} and provide some generalizations that can be useful for further applications in Contact Geometry. The proof of this result follows the usual pattern in the $h$-principle theory:
\begin{enumerate}
\item We first define a purely topological condition that an embedding needs to satisfy and introduce the concept of \emph{formal iso-Reeb embedding} (Definition \ref{fR}).
\item As it is customary in the $h$-principle theory (see e.g.~\cite{Mur, BEM, LEng}), we restrict ourselves to a particular subclass of formal iso-Reeb embeddings called \emph{small formal iso-Reeb embeddings} (Definition \ref{sfR}), and prove that any small formal iso-Reeb embedding can be deformed into a genuine (small) iso-Reeb embedding (Theorem~\ref{hp2}).
\item Finally, we check under which conditions a given embedding can be equipped with a small formal iso-Reeb embedding structure and show that for embeddings of high enough codimension we can always find such a formal structure, see Lemma~\ref{lemmaHo}. These dimensional restrictions account for the bounds in Theorem~\ref{main2}.
\end{enumerate}

These results are presented as follows. In Subsection~\ref{SS.basics} we introduce some basic notions of the $h$-principle that are used along this section. A technical stability lemma for vector bundles which is instrumental for the next subsections is presented in Subsection~\ref{SS.classic}. In Subsection~\ref{SS.Reebh} we introduce the definitions of formal iso-Reeb embedding and small iso-Reeb embedding, and prove a full $h$-principle in this context (Theorems~\ref{hp1} and~\ref{hp2}). The key lemma to establish the existence of formal iso-Reeb embeddings of high codimension is presented in Subsection~\ref{SS.key}. Finally, using this machinery we prove Theorem~\ref{main2} in Subsection~\ref{SS.fin}.

\subsection{Basic notions of the $h$-principle} \label{SS.basics}
Let us introduce some basic notions in the $h$-principle theory which are key to provide precise statements.

Fix a smooth fibration $\pi: X\rightarrow V$. Denote by $\pi^r:J^r(X)\rightarrow V$ the associated $r$-jet fibration. There is also a natural projection $p_r : J^r(X) \rightarrow X$. Given a section $\sigma:V\rightarrow X$, denote by $j^r(\sigma): V \rightarrow J^r(X)$ the canonical $r$-jet extension. Thus, we have a natural inclusion $\operatorname{Sec}(V,X)\rightarrow \operatorname{Sec}(V,J^r(X))$ where $\operatorname{Sec}(V,X)$ and $\operatorname{Sec}(V,J^r(X))$ are the spaces of sections from $V$ to $X$ and $J^r(X)$ respectively.

A subset $\mathcal{R} \subset J^r(X)$ is called a partial differential relation of order $r$. Define $\operatorname{Sec}_\mathcal{R}(V,J^r(X)) \subset \operatorname{Sec}(V,J^r(X))$ as the space of formal solutions. It is defined as the space of sections satisfying that the image of the section lies in $\mathcal{R}$. Moreover, define the space of solutions, and denote it by $\operatorname{Sec}_\mathcal{R}(V,X) \subset \operatorname{Sec}(V,X)$, to be the space of sections whose $r$-jet extension is a formal solution. A solution in $\operatorname{Sec}_\mathcal{R}(V,X)$ is called a holonomic solution.

\begin{defi}
We say that a partial differential relation $\mathcal{R}$ obeys the rank $k$ $h$-principle if the inclusion $e:\operatorname{Sec}_\mathcal{R}(V,X) \rightarrow \operatorname{Sec}_\mathcal{R}(V,J^r(X))$ of the space of solutions into the space of formal solutions, which induces morphisms $\pi_j(e):\pi_j(\operatorname{Sec}_\mathcal{R}(V,X)) \rightarrow \pi_j(\operatorname{Sec}_\mathcal{R}(V,J^r(X)))$, satisfies that $\pi_j(e)$ is an isomorphism for $j\leq k$. If $k=\infty$ we say that $\mathcal{R}$ satisfies \emph{the full $h$-principle}.
\end{defi}

The following terminology is standard. We say that $\mathcal{R}$ satisfies a:
\begin{itemize}
\item \emph{parametric $h$-principle} if we can deform formal solutions by holonomic solutions parametrically.
\item  \emph{relative parametric $h$-principle} if the following holds: Fix a closed subset $C \subset K$, where $K$ is any compact parameter space. Assume we have a family of formal solutions $\sigma_k, k\in K$ such that $\sigma_k$ with $k\in C$ is a holonomic solution. Then there exists a parametric family of formal solutions $\tilde \sigma_{k,t}, t\in [0,1]$ such that $\tilde \sigma_{k,0}=\sigma_k$, $\tilde \sigma_{k,1}$ are holonomic solutions and moreover $\tilde \sigma_{k,t} = \sigma_k$ for $k\in C$ and all $t$.

\item  \emph{relative to the domain $h$-principle} if the following is satisfied: For any closed subset $D\subset V$, assume we have a formal solution $\sigma$ that is holonomic in an open neighborhood $U$ of $D$. Then there exists a family of formal solutions $\sigma_t, t\in I$ such that $\sigma_0=\sigma$, $\sigma_1$ is holonomic and $\sigma_t|_U=\sigma|_U$ for all $t$.

\item  \emph{$C^0$-dense $h$-principle} if any formal solution $s: V \rightarrow J^r(X)$ can be approximated by a holonomic solution $j^r(\tilde \sigma)$ such that $p_r(s)$ is $C^0$-close to $\tilde \sigma$.

\end{itemize}

It is known (see for instance~\cite[Chapter 6]{hprinc}) that any partial differential relation that satisfies an $h$-principle: parametric, relative to the parameter, relative to the domain, actually satisfies a full $h$-principle.

\subsection{Classical stability lemmas for vector bundles}\label{SS.classic}
The following technical results will be used in the next subsections. Proofs are provided for the sake of completeness though they are well known to experts (see e.g.~\cite[Corollary 4.6]{Hus}).

\begin{lemma}\label{L:Vkt}
Let $V_{k,t}$ be a parametric family of complex bundles over a fixed smooth manifold $M$ with parameters given by $(k,t) \in K \times [0,1]$. Then, there exists a family of complex isomorphisms $\phi_{k,t}: V_{k,0} \to V_{k,t}$.
\end{lemma}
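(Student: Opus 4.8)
The plan is to realize the whole family $\{V_{k,t}\}$ as a single complex vector bundle over a product and then to manufacture the isomorphisms by parallel transport in the $t$-direction; this is the parametric version of the classical homotopy invariance of vector bundles recorded in \cite[Corollary 4.6]{Hus}.

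First I would reinterpret the data. A parametric family of complex bundles $V_{k,t}$ over $M$, with parameters $(k,t)\in K\times[0,1]$, is precisely a complex vector bundle $W\rightarrow M\times K\times[0,1]$ whose restriction to the slice $M\times\{(k,t)\}$ is $V_{k,t}$. Since $M$ is a smooth (hence paracompact) manifold and $K\times[0,1]$ is a parameter space, the total base $M\times K\times[0,1]$ is paracompact, so using a partition of unity $W$ admits a Hermitian metric together with a compatible connection $\nabla$.

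Next, for each fixed $(x,k)\in M\times K$ I would consider the smooth path $\gamma_{x,k}(t)=(x,k,t)$ in the base and let $P^{t}_{x,k}\colon W_{(x,k,0)}\to W_{(x,k,t)}$ denote $\nabla$-parallel transport along $\gamma_{x,k}$. Because $\nabla$ preserves the complex structure and the Hermitian metric of $W$, each $P^{t}_{x,k}$ is $\mathbb{C}$-linear (indeed unitary), and it is invertible since parallel transport is reversible. Setting $(\phi_{k,t})_x:=P^{t}_{x,k}$ and letting $x$ range over $M$ then yields a complex bundle isomorphism $\phi_{k,t}\colon V_{k,0}\to V_{k,t}$ covering the identity of $M$, with $\phi_{k,0}=\mathrm{id}$.

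The only point that requires care — and the mild obstacle of the argument — is the simultaneous smooth dependence of $\phi_{k,t}$ on $x$, $k$ and $t$. This follows from the standard theory of linear ordinary differential equations: in a local trivialization of $W$ the parallel-transport equation is a linear ODE whose coefficients depend smoothly on $(x,k,t)$, so its solutions, and hence the maps $P^{t}_{x,k}$, depend smoothly on all the variables and on the initial condition. This produces the desired smooth family of complex isomorphisms $\phi_{k,t}$ and establishes the lemma.
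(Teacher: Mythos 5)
Your proof is correct, but it takes a genuinely different route from the paper. The paper's own argument embeds the whole family into a trivial bundle: it chooses finitely many sections spanning every fiber, obtains a monomorphism $V_{k,t}\subset \C^n$ via the adjoint of the resulting epimorphism, fixes a Hermitian metric on $\C^n$, and then transports $V_{k,0}$ onto $V_{k,t}$ by integrating the time-dependent vector field obtained as the derivative of the affine intersection maps $v\mapsto (v+H_{k,t})\cap V_{k,t+\varepsilon}$. Your parallel-transport argument is the intrinsic version of the same idea (the paper's flow is, in effect, parallel transport for the connection induced by orthogonal projection in the ambient trivial bundle), and it is shorter and immediately yields unitary isomorphisms. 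What the paper's extrinsic formulation buys is reusability: the embedding $V_{k,t}\subset\C^n$ with a chosen Hermitian structure is exactly the setup that Corollary~\ref{coro:formalDarboux} recycles (``extend $h_{k,t}$ to $\C^n$ and mimic the proof'') to upgrade the isomorphisms to conformal symplectomorphisms, which is the form actually needed later. One caveat on your write-up: in the $h$-principle context $K$ is only assumed to be a compact parameter space, not a manifold, so ``smooth dependence on $k$'' should be weakened to continuous dependence; concretely, one chooses a partial connection in the $t$-direction (by partitions of unity, as in~\cite{Hus}) whose local coefficients are continuous in $k$ and smooth in $(x,t)$, and then continuous dependence of ODE solutions on parameters gives continuity of $\phi_{k,t}$ in $k$. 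This is a cosmetic adjustment, not a gap.
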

\begin{proof}
Take a finite number of sections $\sigma_{r}^{k,t}: M \to V_{k,t}$, $r\in \{1, \ldots, n\}$ varying continuously with the parameters such that for any point $p\in M$ and any parameter value $(k,t)$, there are $l:= \operatorname{rank} V_{k,t}$ sections $\sigma_{r}^{k,t}, 1\leq r\leq l$ (relabeling the index $r$ if necessary) defining a framing of the fiber over $p$. Then the bundle map:
\begin{eqnarray*}
p_{k,t}: \C^n \times M &\to & V_{k,t} \\
(\lambda_1, \ldots, \lambda_n, p) &\to &(\Sigma_{r=0}^n \lambda_r \sigma_{r}^{k,t}(p),p)
\end{eqnarray*}
is an epimorphism of vector bundles. By choosing a metric on each bundle, we find the adjoint map $p_{k,t}^*: V_{k,t} \to \C^n \times M$ that is a monomorphism of vector bundles.

So we may assume that $V_{k,t} \subset \C^n$. Now, fix an hermitian metric on $\C^n$. Denote by $H_{k,t}$ the orthogonal
to $V_{k,t}$ with respect to the fixed metric. Define a map
$$ f_{k,t}^{\varepsilon}: V_{k,t} \to V_{k,t+\varepsilon} $$
in the following way. Choose for each $p\in M$ and $v\in V_{k,t}$ the unique intersection point in $\C^n$ of the affine subspaces $v+H_{k,t}$ and $V_{k,t+\varepsilon}$ and denote it by $v_{\varepsilon}$. We define then the map as $f_{k,t}^\varepsilon(v)=v_{\varepsilon}$.
Finally, for each $p\in M$ define $X_{k,t}:= \lim_{\varepsilon \to 0} \frac{f_{k,t}^{\varepsilon}(v)-v}{\varepsilon}$. This defines a time dependent vector field over each fiber $\{ p \} \times \C^n$. Clearly, its associated flow $\phi_{k,s}$ satisfies that $\phi_{k,s}(V_{k,0}) = V_{k,s}$ and it is an isomorphism of complex bundles, { by construction of $X_{k,t}$ using $f_{k,t}^\varepsilon$}.
\end{proof}

\begin{Corollary} \label{coro:formalDarboux}
Let $(V_{k,t}, [\omega_{k,t}])$ be a parametric family of conformal symplectic bundles over a fixed smooth manifold $M$ with parameter given by $(k,t) \in K \times [0,1]$. Then, there exists a family of isomorphisms $\phi_{k,t}: V_{k,0} \to V_{k,t}$ which furthermore are conformal symplectomorphisms.
\end{Corollary}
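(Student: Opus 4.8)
The plan is to deduce the corollary directly from Lemma~\ref{L:Vkt} by reducing the conformal symplectic data to a compatible complex structure, which is exactly the kind of auxiliary datum the underlying-bundle lemma is designed to transport. Recall that a conformal symplectic structure $[\omega_{k,t}]$ on a real vector bundle $V_{k,t}$ is a conformal class of nondegenerate skew forms, and that any nondegenerate skew form together with a choice of compatible metric canonically determines an almost complex structure $J_{k,t}$ via polar decomposition. First I would choose, continuously in the parameters $(k,t)\in K\times[0,1]$, a bundle metric $g_{k,t}$ on $V_{k,t}$ (using a partition of unity on the compact base, or simply restricting a fixed ambient metric as in the proof of the previous lemma) and a representative $\omega_{k,t}\in[\omega_{k,t}]$. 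The compatible almost complex structure obtained from $(g_{k,t},\omega_{k,t})$ by the standard polar-decomposition recipe $J_{k,t}:=(A_{k,t}^*A_{k,t})^{-1/2}A_{k,t}$, where $A_{k,t}$ is the skew-adjoint endomorphism defined by $\omega_{k,t}(\cdot,\cdot)=g_{k,t}(A_{k,t}\cdot,\cdot)$, varies continuously with $(k,t)$ because each step in the recipe is continuous and $A_{k,t}$ is nonsingular.

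Having equipped each $V_{k,t}$ with a complex structure $J_{k,t}$, I would then view $(V_{k,t},J_{k,t})$ as the parametric family of complex bundles required by Lemma~\ref{L:Vkt} and invoke it to obtain a family of complex bundle isomorphisms $\phi_{k,t}:V_{k,0}\to V_{k,t}$, i.e. maps intertwining $J_{k,0}$ and $J_{k,t}$. The remaining task is to check that these $\phi_{k,t}$ are in fact conformal symplectomorphisms for the chosen representatives, and hence for the conformal classes. The key point is that a complex-linear isomorphism between Hermitian (i.e. $J$-compatible metric plus its associated form) complex vector bundles need not pull back one symplectic form exactly to another, but it does pull back the compatible symplectic form of the target to a form compatible with the pulled-back metric and the source complex structure. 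To get honest conformal symplectomorphisms I would therefore either arrange the metrics $g_{k,t}$ and representatives $\omega_{k,t}$ so that $\phi_{k,t}$ is unitary (possible because the unitary group is a deformation retract of the complex general linear group, so a further homotopy within complex isomorphisms straightens $\phi_{k,t}$ to an isometry of the Hermitian structures), or argue that the space of $J$-compatible forms is contractible and absorb the discrepancy by a conformal rescaling at the level of classes.

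More concretely, the cleanest route is to note that, once we have identified $(V_{k,t},J_{k,t})\cong(V_{k,0},J_{k,0})$ as complex bundles via $\phi_{k,t}$, the pulled-back form $\phi_{k,t}^*\omega_{k,t}$ is a nondegenerate skew form on $V_{k,0}$ compatible with $J_{k,0}$; the space of such forms is convex (it is an open convex cone, being determined by $J_{k,0}$-positive Hermitian metrics), hence the straight-line path from $\phi_{k,t}^*\omega_{k,t}$ to $\omega_{k,0}$ lies in compatible forms and defines, fiberwise, a conformal symplectic isotopy that I can precompose with $\phi_{k,t}$ to correct it into a conformal symplectomorphism. Because all constructions are fiberwise-canonical and continuous in $(k,t)$, the corrected family $\phi_{k,t}$ remains continuous in the parameters.

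I expect the main obstacle to be the final bookkeeping in the last paragraph: verifying that the complex isomorphisms supplied by Lemma~\ref{L:Vkt} can be upgraded to conformal symplectomorphisms \emph{parametrically and continuously}, rather than fiber by fiber. The fiberwise statement is elementary linear algebra (the contractibility of the space of compatible forms and of the coset $GL(n,\mathbb{C})/U(n)$), but one must ensure that the homotopies used to straighten $\phi_{k,t}$ into a unitary or conformal-symplectic map depend continuously on $(k,t)$ and do not destroy the already-achieved equality $\phi_{k,0}=\mathrm{id}$ at $t=0$. This is a routine but slightly delicate application of the convexity of the relevant spaces together with the parametric naturality already built into the proof of Lemma~\ref{L:Vkt}.
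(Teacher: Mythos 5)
Your main line of argument is correct, and it differs from the paper's in how it gets from complex isomorphisms to symplectic ones. The paper also starts by fixing compatible complex structures $J_{k,t}$ (via contractibility of the space of compatible complex structures --- the same fact your polar-decomposition recipe establishes), but it does \emph{not} use Lemma~\ref{L:Vkt} as a black box: it extends the resulting hermitian metrics $h_{k,t}$ to the ambient $\mathbb{C}^n$ and re-runs the lemma's flow construction with those metrics built in, so the isomorphisms it produces are hermitian-preserving, hence already symplectomorphisms for the chosen representatives, with no correction step. Your version is more modular --- the lemma stays untouched, and you repair the complex isomorphisms $\phi_{k,t}$ afterwards --- at the price of having to make the repair canonical and parametric. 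That can be done: take the polar decomposition $\phi_{k,t}=U_{k,t}P_{k,t}$ with respect to the hermitian structures determined by the compatible triples $(\omega_{k,0},J_{k,0})$ and $(\omega_{k,t},J_{k,t})$; the unitary part $U_{k,t}$ intertwines the hermitian structures, hence pulls $\omega_{k,t}$ back to $\omega_{k,0}$, depends continuously on $(k,t)$, and equals the identity at $t=0$. This is exactly the ``straighten to unitary'' option you mention, and it is the one you should commit to.

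Two points in your write-up need repair, though neither is fatal. First, the alternative of absorbing the discrepancy ``by a conformal rescaling at the level of classes'' does not work: two forms compatible with the same $J_{k,0}$ differ by a $J_{k,0}$-commuting positive symmetric automorphism, not by a positive scalar, so they are in general not conformally equivalent --- you must correct the map, not the class. Second, the claim that the straight-line path from $\phi_{k,t}^*\omega_{k,t}$ to $\omega_{k,0}$ ``defines, fiberwise, a conformal symplectic isotopy'' that you can precompose is, as stated, circular: turning a path of fiberwise symplectic forms on a fixed bundle into an isomorphism of the endpoint symplectic bundles is precisely an instance of the corollary being proved; the non-circular mechanism is again the polar-decomposition argument above, applied either to $\phi_{k,t}$ directly or to the pair of $J_{k,0}$-compatible forms. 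Finally, like the paper you should justify the continuous choice of representatives $\omega_{k,t}\in[\omega_{k,t}]$: this is automatic when the conformal class is cooriented (representatives form a bundle with convex, contractible fibers), which is why the paper opens its proof by reducing to that case.
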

\begin{proof}
Since in this paper we only consider conformal symplectic structures induced on contact distributions that are cooriented, we restrict to this case (the general case can be easily reduced to this one by a finite covering argument). In particular, we may assume that the conformal symplectic structure is induced by a symplectic structure $\omega_{k,t}$. Then, fix compatible complex structures $J_{k,t}$. This can be done continuously in families since the space of complex structures which are compatible with a fixed symplectic structure is contractible and thus, we can  always find  global sections: i.e. almost complex structures in the bundle, also in parametric families. This produces an hermitian metric $h_{k,t}$ on $V_{k,t}$.

Extend $h_{k,t}$ to a global hermitian structure $\tilde{h}_{k,t}$ in $\mathbb C^n$. We can then mimic the proof of Lemma~\ref{L:Vkt} to obtain a family of hermitian preserving isomorphisms, which are in addition conformal symplectomorphisms (and in fact symplectomorphisms for the chosen $\omega_{k,t}$).
\end{proof}

Adapting the proof for the real case, we obtain:
\begin{lemma} \label{lem:real}
Let $V_{k,t}$ be a parametric family of real bundles over a fixed smooth manifold $M$ with parameters given by $(k,t) \in K \times [0,1]$. Then, there exists a family of real isomorphisms $\phi_{k,t}: V_{k,0} \to V_{k,t}$.
\end{lemma}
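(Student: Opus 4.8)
The plan is to mimic, essentially verbatim, the proof of Lemma~\ref{L:Vkt}, replacing the complex fibre $\C^n$ and the hermitian metric by a real fibre $\R^n$ and a Euclidean inner product. The complex structure played no essential role in that argument—only the existence of an orthogonal complement and of a flow transporting one subbundle to a nearby one were used—so the construction transfers with no new ideas.

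First I would realise each $V_{k,t}$ as a continuously varying subbundle of a fixed trivial bundle. Using compactness of the parameter space (and of $M$, so that finitely many sections frame every fibre, exactly as in the complex case), choose global sections $\sigma_1^{k,t},\dots,\sigma_n^{k,t}\colon M\to V_{k,t}$ depending continuously on $(k,t)$ such that at every $p\in M$ and every parameter value some $l:=\rank V_{k,t}$ of them frame the fibre over $p$. The resulting bundle epimorphism $\R^n\to V_{k,t}$, $(\lambda_1,\dots,\lambda_n,p)\mapsto(\sum_r\lambda_r\sigma_r^{k,t}(p),p)$, admits—after fixing a fibre metric—an adjoint monomorphism $V_{k,t}\hookrightarrow\R^n$. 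Hence we may assume $V_{k,t}\subset\R^n$ is a family of subbundles of the fixed trivial bundle $M\times\R^n$, varying continuously with $(k,t)$.

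Next I would fix a Euclidean inner product on $\R^n$ and let $H_{k,t}$ be the fibrewise orthogonal complement of $V_{k,t}$. For $\varepsilon$ small the splitting $V_{k,t+\varepsilon}\oplus H_{k,t}=\R^n$ holds fibrewise, by $C^0$-closeness of $V_{k,t+\varepsilon}$ to $V_{k,t}$; thus for $v\in V_{k,t}$ the affine subspace $v+H_{k,t}$ meets $V_{k,t+\varepsilon}$ in a unique point $v_\varepsilon$, defining $f_{k,t}^\varepsilon\colon V_{k,t}\to V_{k,t+\varepsilon}$. Differentiating, $X_{k,t}:=\lim_{\varepsilon\to0}(f_{k,t}^\varepsilon(v)-v)/\varepsilon$ is a continuous time-dependent vector field on each fibre $\{p\}\times\R^n$, and its flow $\phi_{k,s}$ satisfies $\phi_{k,s}(V_{k,0})=V_{k,s}$ and restricts to a linear isomorphism $V_{k,0}\to V_{k,s}$ on fibres, producing the desired family $\phi_{k,t}$.

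The only point requiring care—and hence the main (and fairly minor) obstacle—is the uniform choice of the number $n$ of generating sections together with their continuity in the parameters. As in the complex case, this follows from compactness of the parameter space and the local constancy of $\rank V_{k,t}$; no additional analytic input is needed, which is precisely why the statement is recorded as an immediate adaptation of Lemma~\ref{L:Vkt}.
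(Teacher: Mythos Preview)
Your proposal is correct and follows exactly the approach indicated by the paper, which simply records the lemma with the remark ``Adapting the proof for the real case, we obtain:'' and gives no separate argument. Your replacement of $\C^n$ and the hermitian metric by $\R^n$ and a Euclidean inner product in the proof of Lemma~\ref{L:Vkt} is precisely the intended adaptation.
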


\subsection{An $h$-principle for iso-Reeb embeddings}\label{SS.Reebh}
Following previous notation, let $X$ be a geodesible vector field on $N$, and denote by $\beta$ the $1$-form such that $\eta=\ker \beta$ and $\beta(X)=1$. { That is, $\eta$ is the transverse hyperplane field preserved by $X$.} Let $(M,\xi)$ be a contact manifold with defining contact form $\alpha$, i.e. $\ker \alpha=\xi$.

\begin{Remark}
As in previous sections we either assume that $N$ is compact or $N$ is properly embedded into $M$.
\end{Remark}

With a slight abuse of notation, we will denote $\alpha \circ F_1$ for $\alpha(F_1(\cdot))$ and $d\alpha \circ F_1$ for $d\alpha(F_1(\cdot),F_1(\cdot))$. This is also denoted by ${F_1}^*\alpha$ and ${F_1}^*d\alpha$ in similar discussions in~\cite{hprinc}.

\begin{defi}\label{fR} An embedding $f:(N,X,\eta)\rightarrow (M,\xi)$ is a \emph{formal iso-Reeb} embedding if there exists a homotopy of monomorphisms
$$ F_t:TN \longrightarrow TM,  $$
such that $F_t$ covers $f$, $F_0=df$, $h_1 \alpha \circ F_1= \beta$ and $d\beta|_{\eta}= h_2 d\alpha \circ F_1|_{\eta} $ for some strictly positive functions $h_1$ and $h_2$ on $N$.

\end{defi}
It is clear that any genuine iso-Reeb embedding is a formal iso-Reeb embedding. Indeed, take an iso-Reeb embedding $e:(N,X,\eta)\rightarrow (M,\xi)$, so by hypothesis we have $e^*\alpha=\beta$, which reads as $\alpha \circ  de =\beta$. Thus, we also obtain $e^*d \alpha=d \beta$ that restricted to $\eta$ can be written as  $d\beta|_{\eta}= d\alpha \circ F_1|_{\eta}$, and it is clear that $(e,F_t=de)$ is a formal iso-Reeb embedding.

Both conditions $h_1 \alpha \circ F_1= \beta$ and $d\beta|_{\eta}= h_2 d\alpha \circ F_1|_{\eta} $ are required to fix the definition of formal iso-Reeb embedding. One may be tempted to say that the first condition naturally implies the second one, but this is tantamount to saying that $F_1$ commutes with the exterior differential. This only holds when $F_1$ is holonomic, i.e. the pull-back (through the differential of a morphism) commutes with the exterior differential.

The first main result of this subsection is a full $h$-principle for iso-Reeb embeddings into overtwisted contact manifolds. The general case is more elaborated because it involves the introduction of a particularly appropriate subclass of iso-Reeb embeddings, and will be discussed later.

\begin{theorem}[$h$-principle for iso-Reeb embeddings into overtwisted manifolds]\label{hp1}
Let $f:(N,X,\eta)\rightarrow (M,\xi)$ be a formal iso-Reeb embedding with formal differential $F_t$ such that $\dim N<\dim M$. Furthermore, assume that $\xi$ is an overtwisted contact structure. Then, there exists a homotopy $(f^s, F_t^s)$ of formal iso-Reeb embeddings such that $(f^0, F_t^0)= (f, F_t)$ and such that $(f^1, F_t^1)=(f^1, df^1)$ is a genuine iso-Reeb embedding. Moreover, the natural inclusion of the space of iso-Reeb embeddings whose image does not intersect a fixed overtwisted disk $\Delta$ into the space of formal iso-Reeb embeddings whose image does not intersect $\Delta$ on a fixed overtwisted contact manifold is a homotopy equivalence.
\end{theorem}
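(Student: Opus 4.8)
The plan is to reduce Theorem~\ref{hp1} to the codimension-zero $h$-principle for isocontact embeddings into overtwisted manifolds (Theorem~\ref{CorBEM}) by passing to a neighborhood of the image. First I would observe that a formal iso-Reeb embedding $(f,F_t)$ carries, by Definition~\ref{fR}, exactly the formal data needed to manufacture a contact germ on $N$: the condition $h_1\,\alpha\circ F_1=\beta$ together with $d\beta|_\eta=h_2\,d\alpha\circ F_1|_\eta$ says that the hyperplane distribution $\eta=\ker\beta$ on $N$ pulls back (up to conformal factors) from the contact data $(\alpha,d\alpha)$ along $F_1$. The strategy is therefore to thicken $N$ to an \emph{open} contact manifold $(W,\zeta)$ of the same dimension as $M$ whose Reeb flow restricts to $X$ along the core $N$, so that $f$ becomes a codimension-zero embedding of $W$ into $(M,\xi)$. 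This is precisely the kind of construction carried out in Theorem~\ref{main1} via Lemma~\ref{tech} and Lemma~\ref{L.Inaba}: using $F_1$ to split $TM|_N$ and fill in the normal symplectic directions, one produces a germ of contact form $\alpha_W$ on a tubular neighborhood $W$ of $N$ with $e_0^*\alpha_W=\beta$ and whose Reeb field equals $X$ on $N$.

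Next I would upgrade the formal homotopy $F_t$ to a formal isocontact bundle homomorphism between $(W,\zeta)$ and $(M,\xi)$. Here the stability lemmas of Subsection~\ref{SS.classic} do the essential bookkeeping: Corollary~\ref{coro:formalDarboux} lets me straighten the family of conformal symplectic structures on $\zeta|_N$ versus $\xi|_{f(N)}$ induced by $F_t$ into a genuine conformal symplectomorphism, and Lemma~\ref{lem:real} handles the Reeb/normal line directions, so that $F_1$ extends to a bundle monomorphism $\varphi:TW\to TM$ that is isocontact in the sense of Theorem~\ref{CorBEM} and homotopic to $df$ through injective homomorphisms. Since $\dim N<\dim M$, the normal bundle has positive rank and $W$ is genuinely open, which is exactly the hypothesis Theorem~\ref{CorBEM} requires. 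Applying that theorem (valid because $\xi$ is overtwisted and $W$ is open of full dimension) yields an isotopy of $f$, fixed away from a neighborhood of $\Delta$, to a genuine isocontact embedding $\tilde f:(W,\zeta)\to(M,\xi)$. Restricting $\tilde f$ to the core $N$ and invoking Lemma~\ref{L.Inaba} (the transverse field $X$ preserves $TN\cap\xi$ because $\zeta$ was built with Reeb field $X$) produces the contact form on $M$ whose Reeb field restricts to $X$ on $\tilde f(N)$, i.e.\ a genuine iso-Reeb embedding $(f^1,df^1)$.

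For the final homotopy-equivalence assertion, I would package the whole construction functorially so that it runs in families over a compact parameter space and relative to the closed subset where the input is already holonomic. The overtwisted $h$-principle of~\cite{BEM} is parametric, relative to the parameter, and relative to the domain, hence yields a \emph{full} $h$-principle once one checks the neighborhood-thickening and the bundle-straightening steps above are natural in the parameter; this is where the parametric versions of Corollary~\ref{coro:formalDarboux} and Lemma~\ref{lem:real} (already stated with parameters $(k,t)\in K\times[0,1]$) are used. The hypothesis that the image avoids a fixed overtwisted disk $\Delta$ guarantees that the complement of a neighborhood of $\Delta$ still contains the overtwisting needed to feed Theorem~\ref{CorBEM}, so the isotopies can be taken supported away from $\Delta$, giving the homotopy equivalence on the corresponding subspaces of embeddings.

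\textbf{Main obstacle.} The delicate point is the passage from the $1$-jet data of a formal iso-Reeb embedding (two independent conditions, on $\alpha\circ F_1$ and on $d\alpha\circ F_1|_\eta$, with \emph{a priori} unrelated conformal factors $h_1,h_2$) to a genuine codimension-zero formal isocontact embedding of the thickened neighborhood: one must verify that the filled-in contact germ $\zeta$ on $W$ is compatible, through a \emph{single} homotopy of monomorphisms, with both the Reeb-direction condition and the conformal-symplectic condition simultaneously, rather than each separately. Reconciling these via the stability lemmas while keeping everything relative to $\Delta$ and parametric is the technical heart of the argument.
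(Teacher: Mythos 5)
Your proposal follows essentially the same route as the paper's proof: deform the contact data along $N$ so that the formal conditions hold for the actual differential, thicken $N$ to a codimension-zero neighborhood carrying a genuine contact structure via Lemma~\ref{tech} with the inclusion iso-Reeb, straighten the two structures into a formal codimension-zero isocontact embedding using Corollary~\ref{coro:formalDarboux}, and then invoke Theorem~\ref{CorBEM}, with the fixed overtwisted disk $\Delta$ supplying the relative-to-parameter property needed for the full $h$-principle. The "main obstacle" you flag is precisely what the paper resolves in its Step 1 by the family of isomorphisms $G_t$ satisfying $G_t\circ F_0=F_t$ together with a conformal rescaling, so your outline is sound and matches the paper's argument.
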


\begin{proof}
All the bundles in the next paragraph are bundles over $N$, i.e. $TM$, $TN$, $\xi$, etc. are to be understood as the restriction over $N$ of these bundles, but we shall omit notations like $TM|_N$ for the sake of simplicity.
\\

{\noindent \bf {Step 1: Deform $\xi$ to a homotopic formal contact structure $\bar \xi_1$ on $N$ for which $F_0(\eta) \subset \bar \xi_1$.}} It is standard to find a family of isomorphisms $G_t: TM \to TM$ such that $G_0=id$ and $G_t \circ F_0= F_t$. Denote $\bar \xi_t:={G_t}^{-1}(\xi)$, so we have $\bar \xi_0= \xi$. Define $\omega_t:= d\alpha \circ G_t$ that equips $\bar \xi_t$ with a symplectic vector bundle structure $(\bar \xi_t, \omega_t)$ such that, for $t=1$ we obtain $F_0(\eta) \subset \bar \xi_1$. Denote by $\beta$ a defining $1$-form for $\eta$. Then
$$ (\omega_1)|_{\eta} = d\alpha \circ G_1|_{\eta}= d\alpha \circ F_1 =  (h_2)^{-1} d\beta, $$
where the last equality comes from the definition of formal iso-Reeb embedding. Up to conformal transformation, we can assume that $(\omega_1)|_\eta= h_2(d\alpha \circ F_1)=d\beta$. Therefore, $\bar \xi_1$ is a formal contact structure, homotopic to $\xi$, such that $F_0(\eta)\subset \bar \xi_1$.
\\

{\noindent \bf Step 2: Extend $\bar \xi_1$ to a contact structure on a neighborhood of $N$ and make the inclusion an iso-Reeb embedding.}
Extend the family of distributions $\bar \xi_t$ that are defined over $N$ to a family of distributions $\tilde \xi_t$ defined over a neighborhood $\mathcal O p(N)$. A possible way to do this is just to extend the isomorphisms $G_t: TM \to TM$ over $N$ to a new family $\tilde G_t: TM \to TM$ over $\mathcal{O}p(N)$ that can be used to define $\tilde \xi_t:=\tilde G_t (\tilde \xi_0).$ Then, using Lemma~\ref{tech} where $(M,\xi)$ is the neighborhood $\mathcal{O}p(N)$ and $\tilde \xi_1$, we obtain a contact structure $\hat \xi_1$ that is defined on $\mathcal{O}p(N) \supset N$ inside $M$ and is homotopic to $\tilde \xi_1$. Also, we obtain an iso-Reeb embedding of $(N, X, \eta)$  with respect to a contact form $\tilde \alpha_1$ defining the contact structure $\hat \xi_1$.
\\

{\noindent \bf Step 3: Reduce to formal isocontact embeddings.}
Summarizing, we have obtained that $\tilde \xi_0 =\xi$ and $\hat \xi_1$ are homotopic as formal contact structures in the neighborhood of $N$. By Corollary~\ref{coro:formalDarboux}, we can find a family of bundle isomorphisms $\phi_t: \tilde \xi_1 \to \tilde \xi_t$ that preserves the conformal symplectic structures on a small neighborhood of $N$. Extend $\phi_t$ to $TM|_{\Op (N)}$ and define the family $(e=id, H_t= \phi_t)$ with $e=id: \Op (N) \to \Op (N)$. It is a codimension $0$ formal isocontact embedding.
\\

{\noindent \bf Step 4: Conclusion.}
Applying the $h$-principle for isocontact embeddings in codimension $0$ with overtwisted target, c.f.~Theorem \ref{CorBEM}, we obtain the first part of Theorem \ref{hp1}.

Now observe that the previous arguments work parametrically. Also, it is simple to check that the proof is relative to any closed subdomain of the domain $N$. It is left to check that it works relative to the parameter, however this is not true in general. It is simple to realize that a sufficient condition to reproduce the proof making it relative to the parameter, see~\cite{BEM}, is restricting to the class of embeddings which do not intersect a fixed overtwisted disk. This is because in the previous construction we naturally obtain genuine iso-Reeb embeddings which do not intersect a fixed overtwisted disk. It is clear that for this subclass the previous three properties, parametric, relative to the domain and relative to the parameter, imply a full $h$-principle. The theorem then follows.
\end{proof}



Let us consider now a specific subclass of iso-Reeb embeddings, what we call \emph{small iso-Reeb embedding}. While it imposes an extra condition on the iso-Reeb embedding, the advantage is that it will allow us to prove a full $h$-principle.

\begin{defi} \label{sfR} Assume that there is a decomposition $(\xi|_N, d\alpha|_N) = (\xi' \oplus V, d\alpha|_{\xi'} + d\alpha|_{V})$ as orthogonal conformal symplectic subbundles { (i.e., a product of conformal symplectic bundles)}, and we further assume that $V$ is a proper subbundle of $\xi$.

An embedding $f:(N,X, \eta )\rightarrow (M,\xi=\ker \alpha)$ is a \emph{small formal iso-Reeb embedding} if there exists a homotopy of monomorphisms
$$ F_t:TN \longrightarrow TM\,,  $$
such that $F_t$ covers $f$, $F_0=df_0$,  $F_1(\eta)\subsetneq \xi'$ and $d\beta|_{\eta}= h_2 d\alpha \circ F_1|_{\eta} $, for some strictly positive function $h_2$ on $N$.

Likewise we say that $f:(N,X, \eta )\rightarrow (M,\xi=\ker \alpha)$ is a \emph{small iso-Reeb embedding} if $df(\eta)= TN \cap \xi$ and $df(\eta)\subsetneq \xi'$, where $\xi=\xi' \oplus V$ is an orthogonal conformal symplectic decomposition and $V$ is a non trivial subbundle.
\end{defi}

Clearly, any small iso-Reeb embedding is in particular an iso-Reeb embedding.  The embedding satisfies that $\xi \cap TN=\eta$, and hence by Lemma~\ref{L.Inaba} there is a contact form such that its Reeb vector field satisfies $R|_N=X$. If $X$ is negatively transverse to $\xi$, one can consider $-X$ instead. Otherwise, the contact form such that a negatively transverse $X$ is Reeb is a negative contact form.

\begin{theorem}[$h$-principle for small iso-Reeb embeddings]\label{hp2}
Let $f: (N,X,\eta) \rightarrow (M,\xi)$ be a small formal iso-Reeb embedding into a contact manifold with formal derivative $F_t$. Then there is a homotopy $(f^s, F_t^s)$ such that $(f^1, F^1_t=df^1)$ is a genuine (small) iso-Reeb embedding and one can take $f^s$ to be arbitrarily $C^0$-close to $f$.

Moreover the natural inclusion of the space of small iso-Reeb embeddings into the space of small formal iso-Reeb embeddings on a fixed contact manifold is a homotopy equivalence.
\end{theorem}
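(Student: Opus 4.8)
The plan is to follow the same four-step scheme as in the proof of Theorem~\ref{hp1}, reducing the iso-Reeb embedding problem to an isocontact embedding problem and then invoking a known $h$-principle; the only---but essential---difference is that, since the target $(M,\xi)$ is now an arbitrary contact manifold rather than an overtwisted one, we cannot appeal to the codimension-$0$ $h$-principle of Theorem~\ref{CorBEM}. Instead, the smallness hypothesis will be used precisely to produce an isocontact embedding of \emph{positive} codimension, for which the general $h$-principle of Theorem~\ref{isoc} (open manifolds, codimension $\ge 2$) applies with no overtwistedness assumption on the target.

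First I would run Steps~1 and~2 essentially as in Theorem~\ref{hp1}. From the formal differential $F_t$ one builds isomorphisms $G_t\colon TM\to TM$ with $G_0=\mathrm{id}$ and $G_t\circ F_0=F_t$, sets $\bar\xi_t:=G_t^{-1}(\xi)$ and $\omega_t:=d\alpha\circ G_t$, and checks that at $t=1$ one has $F_0(\eta)\subset\bar\xi_1$ with $(\omega_1)|_\eta=d\beta$. The point to add here is that the smallness decomposition $\xi=\xi'\oplus V$ is transported by $G_1^{-1}$ to an orthogonal conformal symplectic splitting $\bar\xi_1=\bar\xi_1'\oplus\bar V$ with $\bar V$ nontrivial and $F_0(\eta)\subset\bar\xi_1'$, so that smallness survives at the formal level. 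Extending $\bar\xi_1$ over a neighborhood and applying Lemma~\ref{tech} (with the splitting preserved on $N$) then yields a genuine contact form $\tilde\alpha_1$, with contact structure $\hat\xi_1=\xi'\oplus V$ on $\mathcal{O}p(N)$, for which the inclusion of $(N,X,\eta)$ is already a genuine \emph{small} iso-Reeb embedding, i.e. $TN\cap\hat\xi_1=\eta\subsetneq\xi'$.

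The heart of the argument---and the step I expect to be the main obstacle---is to replace Step~4 of Theorem~\ref{hp1} by a positive-codimension move, and this is exactly where smallness enters. Because $V$ is a nontrivial symplectic subbundle and $N$ is a genuine small iso-Reeb submanifold, I would produce inside $(\mathcal{O}p(N),\hat\xi_1)$ a contact submanifold $W$ of codimension $\dim V\geq 2$ through $N$, tangent to $\xi'\oplus\langle R\rangle$ and having $N$ as its core, with $\zeta_W:=TW\cap\hat\xi_1$ satisfying $TN\cap\zeta_W=\eta$. Since $\hat\xi_1$ is homotopic to $\xi$ as a formal contact structure on $\mathcal{O}p(N)$ (Step~3 of the scheme, via Corollary~\ref{coro:formalDarboux}), the inclusion $W\hookrightarrow M$ becomes a formally isocontact embedding of $(W,\zeta_W)$ into $(M,\xi)$ with $W$ open and $\dim W=\dim M-\dim V\leq\dim M-2$, so the open-manifold branch ($n\leq m-1$) of Theorem~\ref{isoc} applies. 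The delicate bookkeeping is to verify that smallness genuinely yields such a codimension-$\geq 2$ contact submanifold model carrying all the formal data demanded by Theorem~\ref{isoc}, and to track the closed/open dichotomy so that the correct branch is used.

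Granting this, Theorem~\ref{isoc} deforms $W\hookrightarrow M$, through an isotopy $C^0$-close to the inclusion, into a genuine isocontact embedding $j\colon W\to M$. Since $j$ carries $\zeta_W$ to $\xi\cap T(j(W))=:\xi'$ and $j(N)\subset j(W)$, one computes $T(j(N))\cap\xi=dj(\eta)\subsetneq\xi'$, so the smallness condition is met; as $X$ preserves $\eta$, Lemma~\ref{L.Inaba} then upgrades $j|_N$ to a genuine small iso-Reeb embedding of $(N,X,\eta)$ into $(M,\xi)$ that is $C^0$-close to $f$. Finally, for the homotopy-equivalence statement I would observe that Steps~1--3 run in families (Corollary~\ref{coro:formalDarboux} and Lemmas~\ref{L:Vkt},~\ref{lem:real} are parametric) and that the positive-codimension isocontact embedding theorem of~\cite{hprinc} holds parametrically, relative to the domain and relative to the parameter; as recalled in Subsection~\ref{SS.basics}, these combine into a full $h$-principle. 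Crucially, no ``avoid an overtwisted disk'' caveat is needed here, in contrast with Theorem~\ref{hp1}, precisely because the positive codimension makes the relative-to-parameter version valid for arbitrary targets.
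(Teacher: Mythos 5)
Your proposal is correct and takes essentially the same route as the paper's proof: both arguments use the smallness splitting $\xi=\xi'\oplus V$ to manufacture an open contact manifold of codimension $\operatorname{rank} V\geq 2$ containing $N$ as a core (you carve a contact submanifold $W$ out of the genuine contact neighborhood produced by Lemma~\ref{tech}, whereas the paper builds the formal contact submanifold $E_1$ first and only then applies Lemma~\ref{tech} to it), and then conclude via the open, positive-codimension branch of Theorem~\ref{isoc}, whose parametric and relative versions yield the full $h$-principle without any overtwisted-disk caveat. The difference in the order of these two constructions is cosmetic, not substantive.
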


{ Notice that a parametric family of small formal iso-Reeb embeddings comes equipped with a parametric symplectic orthogonal decomposition of $\xi$ along the embeddings as in Definition \ref{sfR}. Hence the two subbundles of the decomposition have constant rank along the parametric family of embeddings.}

\begin{Remark}
Observe that an $h$-principle in general cannot be satisfied: if we take $(N,X)$ with $X$ a Reeb vector field and associated hyperplane distribution a contact structure $\xi'$, then an iso-Reeb embedding is equivalent to an isocontact embedding. It is well known that codimension-$2$ isocontact embeddings do not satisfy the $h$-principle. The inclusion of formal isocontact embeddings into genuine isocontact embeddings is not injective~\cite{CE}.
\end{Remark}
\begin{proof}[Proof of Theorem \ref{hp2}] \hfill \newline

{\bf \noindent Step 1: Deform the pair $\xi' \subset \xi$ to a new pair of formal contact structures $\bar \xi_1' \subset \bar \xi_1$ such that $F_0(\eta) \subset \bar \xi_1'$.}
We start by fixing the small formal iso-Reeb embedding $f$. Find $G_t: TM \to TM$ a family of isomorphisms such that $G_0=id$ and $G_t \circ F_0= F_t$. Denote $\bar \xi'_t:= {G_t}^{-1}(\xi')$ and $\bar \xi_t:={G_t}^{-1}(\xi)$, so we have $\bar \xi_0= \xi$. Define $\omega_t:= d\alpha \circ G_t$ that equips $\bar \xi_t$ with a conformal symplectic vector bundle structure $(\bar \xi_t, \omega_t)$ such that, for $t=1$ we obtain $F_0(\eta) \subset \bar \xi_1$. Likewise we obtain a conformal symplectic vector subbundle structure $\omega_t'= (d\alpha)|_{\xi'} \circ G_t$. Denote by $\beta$ the defining $1$-form for $\eta$, i.e. $\ker \beta= \eta$. We have
$$ (\omega_1)|_{\eta} = d\alpha|_{\xi'} \circ G_1|_{\eta}= d\alpha|_{\xi'} \circ F_1 =  (h_2)^{-1} d\beta\,, $$
where the last equality comes from the definition of formal small iso-Reeb embedding. Up to conformal transformation, we may assume that $(\omega_1)|_\eta= h_2(d\alpha \circ F_1)=d\beta$. We also obtain $ h_1(\alpha \circ F_1)=\beta$.
\\

{\bf \noindent Step 2: Find a positive codimension contact submanifold on a neighborhood of $N$ that contains it.}
Since, by hypothesis, there is a conformal symplectic orthogonal decomposition $(\bar \xi_1, \omega_1)= (\bar \xi_1', \omega_1') \oplus (\bar \xi_1')^{\perp \omega_1}$,  consider a  small neighborhood of the zero section of the bundle $\bar \xi'_t \to N$ (that exists because $F_0(\eta)$ is included but not equal to $\bar \xi'_1$), and denote it by $E_t$. Build an embedding of codimension (greater or equal than) $2$, $E_t \supset N$, by fixing a metric and applying the exponential map. Extending the exponential map to $(\bar \xi_1')^{\perp \omega_1}$, we obtain a local fibration of a neighborhood of $N$ as $\mathcal{O}p(N) \to E_t$, with linear conformal symplectic fiber given by $\bar \xi_1'$.
Thus, the conclusion is that the neighborhood $\mathcal{O}p(N)$ can be understood as a small tubular neighborhood of the formal contact submanifold $E_t$.
\\

{\bf \noindent Step 3: Mimic the proof of Theorem \ref{hp1}.}
We apply steps 2 and 3 as in  Theorem \ref{hp1} to obtain a contact structure $\tilde \xi'_1$ in $E_1$ and by~Lemma \ref{tech} an iso-Reeb embedding of $(N, X, \eta)$ into $(E_1, \tilde \xi')$. Using that $(E_1,  \tilde \xi'_1) \stackrel{id}{\to} (M, \xi)$ is a positive codimension formal isocontact embedding with open source manifold, we can apply the $h$-principle Theorem~\ref{isoc} to obtain an isocontact embedding, whose restriction to $N$ is $C^0$-close to the original embedding. To obtain the $C^0$-closeness we use the fact that we are just obtaining $C^0$-closeness on a positive codimension core, i.e. $N$, of the manifold $E_1$.  All the previous constructions can be done parametrically, relative to the parameter and relative to the domain. Accordingly, we obtain a full $C^0$-dense $h$-principle.
\end{proof}

Note that the data of a formal (small) iso-Reeb embedding include the choice of a distribution $\eta$ invariant under the flow of $X$. It is important to realize that this choice is not unique. In particular, the space of invariant distributions for a fixed geodesible vector field is a vector space, the transverse ones conforming a cone inside it. Moreover, Theorems~\ref{hp1} and~\ref{hp2} depend on the invariant distribution chosen, as the following result illustrates.

\begin{prop}
For an isocontact embedding $e:(N^{2n_0+1},\xi_N)\rightarrow (M,\xi_M)$ of codimension $2$ (which is clearly an iso-Reeb embedding for any Reeb vector field on $N$), there is a Reeb field $R$ and a distribution $\eta'$ invariant under $R$, which is  $C^0$-close to $\xi_N$, such that $(N,R,\eta')$ does not admit an iso-Reeb embedding into $M$, if $n_0\geq 2$.

\end{prop}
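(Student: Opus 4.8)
The plan is to reduce the non-existence of an iso-Reeb embedding to the well-known failure of the $h$-principle for codimension-$2$ isocontact embeddings. First I would fix the data: let $\beta$ be a contact form for $\xi_N$ and $R:=R_\beta$ its Reeb field, so that $(N,R,\xi_N)$ is a genuine iso-Reeb structure realised by $e$ (this is the parenthetical claim in the statement, and it follows from Lemma~\ref{L.Inaba}, since under $e$ one has $TN\cap\xi_M=\xi_N$, which $R$ preserves). The key reduction, which I would record first, is the following equivalence: if $\eta'$ is a \emph{contact} distribution on $N$ admitting $R$ as its Reeb field, then an iso-Reeb embedding of $(N,R,\eta')$ into $(M,\xi_M)$ is exactly an isocontact embedding of $(N,\eta')$ into $(M,\xi_M)$. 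One direction is immediate, as an iso-Reeb embedding $e'$ satisfies $e'^*\xi_M=\eta'$ by definition; for the converse, an isocontact embedding $e'$ has $R$ transverse to $\xi_M$ along $e'(N)$, because $R\notin\eta'=TN\cap\xi_M$ while $R\in TN$, and $R$ preserves $TN\cap\xi_M=\eta'$, so Lemma~\ref{L.Inaba} upgrades $e'$ to an iso-Reeb embedding.

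Next I would construct the distribution $\eta'$. Writing a candidate contact form as $\beta'=\beta+\gamma$, the requirement that $R$ be the Reeb field of $\beta'$ forces $\gamma(R)=0$ and $\iota_R\,d\gamma=0$, i.e. $\gamma$ is basic for the flow of $R$; for such $\gamma$ the identities $\beta'(R)=1$ and $\iota_R\,d\beta'=0$ hold automatically, so $\eta':=\ker\beta'$ is $R$-invariant whenever $\beta'$ is contact. The goal is to choose $\gamma$ with $C^0$-small norm (so that $\eta'$ is $C^0$-close to $\xi_N$) but with large exterior derivative, so that the linear path $\beta+t\gamma$ leaves the contact cone for intermediate $t$ and Gray stability does \emph{not} force $\eta'$ to be isotopic to $\xi_N$. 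Appealing to the rigidity of codimension-$2$ isocontact embeddings (the absence of an $h$-principle, cf.~\cite{CE}), valid precisely in the range $\dim N=2n_0+1\geq 5$, I would arrange $\gamma$ so that $(N,\eta')$ lands in a different component of the space of contact structures with Reeb field $R$ and carries a formal isocontact embedding into $(M,\xi_M)$ — inherited from $e$ by $C^0$-closeness of the plane fields — while admitting no genuine one. By the equivalence of the previous paragraph, $(N,R,\eta')$ then admits no iso-Reeb embedding, which is the desired conclusion.

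The main obstacle is the simultaneous realisation of the three constraints on $\eta'$: being $C^0$-close to $\xi_N$, having $R$ as its Reeb field, and being isocontact non-embeddable into $(M,\xi_M)$. The tension is that $C^0$-closeness \emph{together} with a homotopy through contact structures would, by Gray stability on the compact manifold $N$, render $\eta'$ isotopic to $\xi_N$ and hence embeddable; the construction must therefore place $\eta'$ in a distinct isotopy class, which is exactly what a high-frequency basic perturbation $\gamma$ with small $C^0$-norm and large derivative achieves, since such a perturbation never provides a path of contact forms to which Gray applies. Verifying that the resulting $\eta'$ genuinely fails to embed is where the non-formal, rigid character of codimension-$2$ contact embeddings is essential, and this is the step that requires $n_0\geq 2$: for $n_0=1$ (so $\dim N=3$ and $\dim M=5$) the available flexibility would undermine the obstruction, which is precisely why the hypothesis is imposed.
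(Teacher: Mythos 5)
Your reduction in the first paragraph is correct as far as it goes: for a \emph{contact} distribution $\eta'$ whose Reeb field is $R$, iso-Reeb embeddings of $(N,R,\eta')$ are the same thing as isocontact embeddings of $(N,\eta')$, by Lemma~\ref{L.Inaba}. But this restriction to contact $\eta'$ is self-imposed and is exactly what makes your plan unworkable. The crux of your argument --- producing a contact structure $\eta'$ with Reeb field $R$, $C^0$-close to $\xi_N$, admitting a formal isocontact embedding into $(M,\xi_M)$ but no genuine one --- is never established, and the result you invoke cannot establish it: the Casals--Etnyre theorem in~\cite{CE} is a \emph{non-simplicity} statement (there exist codimension-$2$ isocontact embeddings that are formally isotopic but not contact isotopic), not a \emph{non-existence} statement. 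Nothing in that paper, or in your sketch, produces a contact structure that formally embeds in codimension $2$ yet admits no isocontact embedding at all --- let alone one constrained to share the Reeb field $R$ and to be $C^0$-close to $\xi_N$. Your second paragraph is a wish list (``I would arrange $\gamma$ so that\dots''), with no mechanism for verifying the failure to embed; the explanation of where $n_0\geq 2$ enters is correspondingly vague.

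The paper's proof sidesteps all of this by choosing $\eta'$ to be a \emph{non-contact} invariant distribution, which the statement permits. Take a contact form with Reeb field $R$ having a standard neighborhood $\mathbb S^1\times D^{2n_0}$ of a periodic orbit with $\alpha=d\theta+r^2\alpha_{std}$, and replace it by $\beta=d\theta+f(r)\alpha_{std}$, where $f$ vanishes for $r\leq \tfrac12$ and equals $r^2$ near the boundary; then $\eta'=\ker\beta$ is $R$-invariant, $C^0$-close to $\xi_N$, and \emph{integrable} on the region $r\leq\tfrac12$. If $(N,R,\eta')$ admitted an iso-Reeb embedding $e'$ into $(M,\xi_M)$, the $2n_0$-dimensional disk $\{\theta=0,\, r\leq\tfrac12\}$ would be tangent to $\eta'$ and hence its image tangent to $\xi_M$; but any submanifold tangent to a contact distribution is isotropic for the conformal symplectic structure, so its dimension is at most $\tfrac12\operatorname{rank}\xi_M=n_0+1$. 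Since $2n_0>n_0+1$ exactly when $n_0\geq 2$, this is a contradiction --- and it shows concretely why the hypothesis $n_0\geq2$ appears. To salvage your approach you would need a genuinely new non-existence theorem for codimension-$2$ isocontact embeddings; the paper's argument needs only linear algebra.
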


\begin{proof}
It is standard that one can take a Reeb field $R$ on $N$ with a standard neighborhood around a periodic orbit of type $\mathbb S^1\times D^{2n_0}$ endowed with a contact form $\alpha= d\theta + r^2 \alpha_{std}$, where $r$ is the radial coordinate on $D^{2n_0}$ and $\alpha_{std}$ is the standard contact form on $\mathbb S^{2n_0-1}$. In particular, the Reeb field has the form $\partial_\theta$ in this neighborhood. Now choose function $f:[0,1] \rightarrow \R^+$ satisfying the following conditions:
\begin{itemize}
\item $f(r)=0$ for $r \leq \frac{1}{2}$,
\item $f(r)$ is $r^2$ for $r \in [\frac{3}{4},1]$.
\end{itemize}
The form $\beta:=d\theta + f(r)\alpha_{std}$ extends to the whole manifold since it coincides with $\alpha$ on the boundary of the neighborhood. Moreover, it defines a transverse distribution $\eta':=\ker \beta$ that is invariant under the flow of $R$. Assume that the triple $(N,R,\eta')$ admits an iso-Reeb embedding $e'$ in $(M,\xi)$. Then the submanifold $\{0 \} \times D^{2n_0} \subset \mathbb S^1 \times D^{2n_0} \subset N \xrightarrow{e'} (M,\xi)$ is clearly a submanifold tangent to $\xi_M$, which leads to a contradiction.
\end{proof}

\subsection{A technical lemma: the $j$-connectedness of the space of isotropic subbundles inside a symplectic bundle}\label{SS.key}

The main result of this subsection is Lemma~\ref{lemmaHo} below. It is an instrumental lemma that will be our main tool to check that any smooth embedding with high enough codimension is a small formal iso-Reeb embedding. This is the most delicate point of the proof of Theorem~\ref{main2}. Throughout this subsection, the dimension of $N$ is denoted by $n$ and the dimension of $M$ is denoted by $2m+1$.

\begin{lemma}\label{lemmaHo}
Let $(N,X,\eta) \hookrightarrow (M,\xi)$ be an embedding such that $X \pitchfork \xi$, and $(\xi,\omega)$ is a symplectic hyperplane bundle of real rank $2m$. Denote by $\beta$ a defining $1$-form of $\eta$ in $N$.  If $2m \geq 3n -1$ then there exists a family $( \xi_t,  \omega_t)$ of symplectic distributions such that  $( \xi_1 ,  \omega_1)=(\xi,\omega)$ and $( \xi_0,  \omega_0)$ satisfies $\eta =  \xi_0 \cap TN$ and $\eta$ is an isotropic subspace of $\xi_0$. Furthermore $ ( \xi_t,  \omega_t)$ coincides with $(\xi, \omega)$ away from a neighborhood of $N$.
\end{lemma}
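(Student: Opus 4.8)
The plan is to split the construction into an easy \emph{positioning} step followed by the genuinely topological \emph{isotropic} step, parametrising everything so that $t=1$ returns the original pair $(\xi,\omega)$ and $t=0$ is a pair in which $\eta$ is isotropic; the two pieces are then concatenated and reparametrised over $[0,1]$. All bundles are regarded over $N$ (or a tubular neighbourhood, which retracts onto $N$). First I would deform the hyperplane field $\xi$, keeping it transverse to $X$, into a field $\xi'$ with $\eta\subset\xi'$ and $\xi'\cap TN=\eta$. Fibrewise a hyperplane transverse to $X_p$ is the kernel of a covector normalised by $\alpha(X_p)=1$, so such hyperplanes form an affine (hence contractible) space, and the extra constraint $\alpha|_{\eta_p}=0$ cuts out a nonempty affine subspace; thus the section $\xi$ homotopes into the constrained family over all of $N$ without obstruction, carrying $\omega$ along the moving hyperplanes by linear isomorphisms so that nondegeneracy is preserved. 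This yields a symplectic distribution $(\xi',\omega')$ with $\eta\subset\xi'$ and $\xi'\cap TN=\eta$.

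\textbf{Reduction of the hard step.} Now I would fix $\xi'$ and move only the form, which automatically preserves $\xi'\cap TN=\eta$. The key reduction is: it suffices to homotope the subbundle $\eta\subset\xi'$ through $(n-1)$-dimensional subbundles $\eta_t$, with $\eta_1=\eta$ and $\eta_0$ an $\omega'$-isotropic subbundle. Indeed, an isotopy-extension argument for subbundles produces a family $\Phi_t\in GL(\xi')$ over $N$ with $\Phi_1=\mathrm{id}$ and $\Phi_t(\eta)=\eta_t$; setting $\omega_t:=\Phi_t^*\omega'$ gives a path of symplectic forms with $\omega_1=\omega'$ and $\omega_t(\eta,\eta)=\omega'(\eta_t,\eta_t)$, so that $\eta$ is $\omega_0$-isotropic precisely because $\eta_0$ is. Thus the whole problem is converted into deforming the \emph{section} $\eta$ of the Grassmann bundle $\operatorname{Gr}_{n-1}(\xi')\to N$ into the subbundle $\operatorname{Isot}_{n-1}(\xi',\omega')\to N$ of isotropic $(n-1)$-planes.

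\textbf{The connectivity estimate (the crux).} This section-deformation is governed by obstruction theory and is possible as soon as the fibre inclusion $\operatorname{Isot}_{n-1}(\R^{2m})\hookrightarrow\operatorname{Gr}_{n-1}(\R^{2m})$ is $c$-connected with $c\ge\dim(N\times I)=n+1$. I would compute $c$ by building isotropic frames one vector at a time: at stage $j$ the new vector ranges over $V_{j-1}^{\omega}\setminus V_{j-1}\simeq S^{\,2m-2j+1}$, included into the full Stiefel fibre $\R^{2m}\setminus V_{j-1}\simeq S^{\,2m-j}$, and this inclusion is $(2m-2j+1)$-connected, the least connected stage being $j=n-1$. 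Comparing the isotropic and ordinary Stiefel towers, which share the same $O(n-1)$-frame fibre over each plane, the five lemma transfers the estimate to the Grassmannians, giving that $\operatorname{Isot}_{n-1}\hookrightarrow\operatorname{Gr}_{n-1}$ is $(2m-2n+3)$-connected. The hypothesis $2m\ge 3n-1$ then forces $2m-2n+3\ge n+2$, comfortably above $\dim(N\times I)$, so $\eta$ deforms into an isotropic subbundle and the preceding step closes.

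\textbf{Main obstacle and loose ends.} The heart of the proof, and the only place where the bound $2m\ge 3n-1$ enters, is the connectivity computation of the third step: correctly setting up the incremental isotropic-frame fibration and comparing it with the ordinary Stiefel fibration to read off the relative connectivity of $\operatorname{Isot}_{n-1}\hookrightarrow\operatorname{Gr}_{n-1}$. Everything else is routine. In particular, the final clause that $(\xi_t,\omega_t)$ coincide with $(\xi,\omega)$ away from a neighbourhood of $N$ is obtained by damping each deformation (the covector interpolation of the first step and the endomorphism family generating $\Phi_t$ in the second) with a bump function equal to $1$ on $N$ and $0$ outside a small neighbourhood; since the symplectic condition and the transversality $X\pitchfork\xi_t$ are open, the damped families remain symplectic distributions and are unchanged outside the neighbourhood, as required.
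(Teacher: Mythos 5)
Your proposal is correct, and its geometric skeleton is the same as the paper's: first reposition $\xi$, keeping it transverse to $X$, so that its intersection with $TN$ becomes exactly $\eta$, carrying $\omega$ along by bundle isomorphisms (the paper does this by linearly interpolating between $\eta$ and $\xi\cap TN$ and invoking its stability Lemma~\ref{lem:real}); then fix the distribution, homotope the subbundle $\eta$ to an isotropic subbundle, and undo this homotopy by bundle automorphisms, pulling back the symplectic form (the paper's $H_t$ and $\omega_0\circ H_t$). Where you genuinely diverge is in the proof of the homotopy-theoretic core, the paper's Lemma~\ref{isotr}, which is the only place the hypothesis $2m\geq 3n-1$ enters. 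The paper presents $\operatorname{Gr}:=\operatorname{Grass}(n-1,\R^{2m})$ and $\operatorname{Gr}_{is}:=\operatorname{Grass}_{is}(n-1,\R^{2m})$ as homogeneous spaces, reduces the relative homotopy groups to those of the spaces $\Gamma_k=SO(2k)/U(k)$, and uses their stable range $2k-2$ plus the fibration $\Gamma_k\to\Gamma_{k+1}\to \mathbb S^{2k}$ to get the borderline epimorphism when $2m=3n-1$. You instead compare the isotropic and ordinary Stiefel towers fibrewise: at stage $j$ the fiber inclusion is the subsphere inclusion $S^{2m-2j+1}\hookrightarrow S^{2m-j}$, which is $(2m-2j+1)$-connected, and a five-lemma induction up the towers and then down the common frame fibrations shows that $\operatorname{Gr}_{is}\hookrightarrow\operatorname{Gr}$ is $(2m-2n+3)$-connected. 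This is more elementary (no Bott-type stability of $SO/U$ is needed) and actually yields a stronger estimate: obstruction theory for compressing the section over the $n$-manifold $N$ only needs the pair $(\operatorname{Gr},\operatorname{Gr}_{is})$ to be $n$-connected, and under $2m\geq 3n-1$ your bound gives $2m-2n+3\geq n+2$; indeed your argument would close already under $2m\geq 3n-3$, which is consistent with (and sharper than) the paper, whose authors note after Theorem~\ref{main2} that their dimensional bound may be two away from optimal. Two small remarks: requiring connectivity $\geq n+1$ is slightly more than the obstruction theory demands (harmless, since you have it), and your bump-function damping to make the family stationary away from a neighborhood of $N$ is fine --- it in fact makes explicit a point the paper's proof leaves implicit.
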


\begin{proof}
It is clear by assumption that $ TM|_N= \langle X \rangle \oplus \xi$. This implies that $\xi$ and $TN$ are transverse subspaces in $TM|_N$ and thus we can define a new bundle $\eta_1:= \xi \cap TN$. The linear interpolation between $\eta=\eta_0$ and $\eta_1$, which is well defined since $\eta_0$ and $\eta_1$ are contained in $TN$ and are transverse to $X$, provides a homotopy of subbundles between these two subbundles inside $TM|_N$. Denote this homotopy by $e_t: \eta_t \rightarrow TM$. Fix an auxiliary metric on $TM$ satisfying that  $\xi$ is orthogonal to $X$. Define $\xi_t= \eta_t \oplus TN^\perp$, so clearly $\xi_1=\xi$. We apply Lemma~\ref{lem:real} to obtain $G_t: \xi_1 \rightarrow \xi_{1-t}$, chosen to satisfy $G_0=id$, which is symplectic by taking the symplectic structure $\omega_{1-t}=\omega\circ G_t^{-1}$. Hence $(\xi_t,\omega_t)$ is a family of symplectic hyperplane bundles such that $\eta \subset \xi_0$. The situation before the first homotopy is pictured in Figure \ref{hom1}.

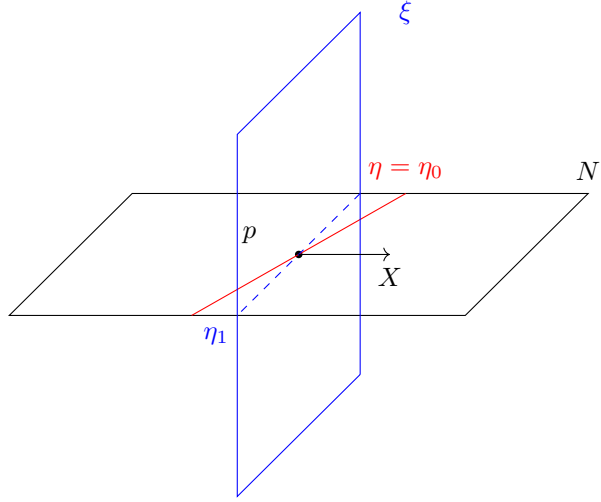
\begin{figure}[!h]
\centering
\begin{tikzpicture}[scale=6]

\draw (0,0,0)--(0,0,0.7)--(1,0,0.7)--(1,0,0)--cycle;

\node at (0.45,0.1,1/2) {$p$};
\draw (0.5,0,0.35) node [circle,fill, scale=0.3] {};

\draw[color=red] (0.6,0,0)--(0.4,0,0.7);
\node[color=red] at (0.6,0.05,0) {$\eta=\eta_0$};

\draw [color=blue, dashed] (0.5,0,0) -- (0.5,0,0.7);
\node[color=blue] at (0.5,0,0.82) {$\eta_1$};
\draw [color=blue] (0.5,-0.4,0)--((0.5,0.4,0)--(0.5,0.4,0.7)--(0.5,-0.4,0.7)--(0.5,-0.4,0);

\node[color=blue] at (0.6,0.4,0) {$\xi$};

\draw [->] (0.5,0,0.35)--(0.7,0,0.35);
\node at (0.7,-0.05,0.35) {$X$};

\node at (1,0.05,0) {$N$};

\end{tikzpicture}

\caption{Picture before first homotopy}\label{hom1}
\end{figure}

Assume that, if $2m \geq 3n-1$, any subbundle $\eta \subset (\xi_0, \omega_0)$ can be homotoped onto an isotropic one, i.e. any rank $n-1$ subbundle of a $2m$ dimensional symplectic bundle, over an $n$-dimensional manifold, is homotopic to an isotropic subbundle. This statement is the content of Lemma~\ref{isotr} below. In other words, we have a family of monomorphisms $F_t: \eta \to \xi_0$ such that $\eta_{is}:=F_1(\eta)$ is isotropic. We extend the monomorphisms $F_t$ into isomorphisms $H_t: \xi_0 \to \xi_0$ satisfying $H_0=id$, $F_t= H_t \circ F_0$. Clearly, the family of symplectic hyperplane bundles $(\xi_0, \omega_0\circ H_t)$ composed with the homotopy constructed in the previous paragraph, gives the required homotopy. The situation before this second homotopy is pictured in Figure~\ref{hom2}.
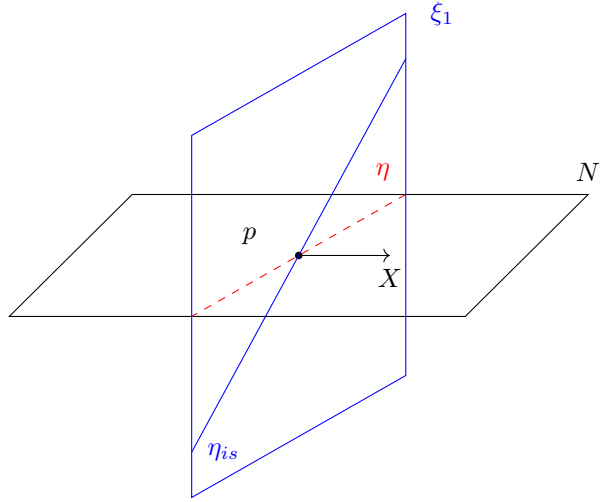
\begin{figure}[!h]
\centering
\begin{tikzpicture}[scale=6]

\draw (0,0,0)--(0,0,0.7)--(1,0,0.7)--(1,0,0)--cycle;

\node at (0.45,0.1,1/2) {$p$};
\draw (0.5,0,0.35) node [circle,fill, scale=0.3] {};

\node[color=red] at (0.55,0.05,0) {$\eta$};

\draw [color=red, dashed] (0.6,0,0) -- (0.4,0,0.7);
\draw [color=blue] (0.6,-0.4,0)--((0.6,0.4,0)--(0.4,0.4,0.7)--(0.4,-0.4,0.7)--(0.6,-0.4,0);

\draw [color=blue] (0.6,0.3,0)--(0.4,-0.3,0.7);
\node [color=blue] at (0.47,-0.3,0.7) {$\eta_{is}$};

\node[color=blue] at (0.68,0.4,0) {$\xi_1$};

\draw [->] (0.5,0,0.35)--(0.7,0,0.35);
\node at (0.7,-0.05,0.35) {$X$};

\node at (1,0.05,0) {$N$};

\end{tikzpicture}
\caption{Picture before second homotopy}\label{hom2}
\end{figure}

\end{proof}

\begin{lemma} \label{isotr}
{ Let $\xi$ be a symplectic bundle of rank $2m$ over $N$ and denote by $\eta$ a rank $n-1$ subbundle of $\xi$}. If $2m\geq 3n-1$, $\eta$ is homotopic to an isotropic subbundle.

\end{lemma}

\begin{proof}
Observe that we need to find a section of a bundle $E\rightarrow N$ whose fiber is $P=\operatorname{Path}(\operatorname{Grass}(n-1,\mathbb{R}^{2m}), \operatorname{Grass}_{is} (n-1,\mathbb{R}^{2m}))$, i.e. the space of paths connecting a fixed base point in $\operatorname{Grass}(n-1,\mathbb{R}^{2m})$ with end point in the Grassmanian of isotropic subspaces of dimension $n-1$ in $\mathbb{R}^{2m}$. This is a homotopy fibration with fiber homotopic to the space of loops in the $\operatorname{Gr}:=\operatorname{Grass}(n-1,\mathbb{R}^{2m})$ based on the subspace $\operatorname{Gr}_{is}:=\operatorname{Grass}_{is} (n-1,\mathbb{R}^{2m})$.

As explained in~\cite[Section 4.3, Proposition 4.64]{Hat} and the subsequent discussion, we have the identification $\pi_j(P) \cong \pi_{j+1}(\operatorname{Gr},\operatorname{Gr}_{is})$. By standard obstruction theory, a sufficient condition for the existence of such a section is to assume that the fiber $P$ is $(n-1)$-connected.

Recall that,
\begin{eqnarray*}
\operatorname{Gr} &  \cong  & \frac{SO(2m)}{SO(n-1) \times SO(2m-(n-1))}, \\
\operatorname{Gr}_{is} & \cong & \frac{U(m)}{SO(n-1)\times U(m-(n-1))}.
 \end{eqnarray*}
We have the following commutative diagram, given vertically by the relative exact sequences, and horizontally by quotients.
 \begin{center}
\begin{tikzcd}[column sep=tiny]
\pi_j(SO_{2m-n+2},U_{m-n+1}) \arrow{r}{} & \pi_j(SO_{2m},U_m) \arrow{r}{} & \pi_{j}(\operatorname{Gr},\operatorname{Gr}_{is}) \arrow{r}{} &  \pi_{j-1}(SO_{2m-n+2},U_{m-n+1}) \arrow{r}{} & \pi_{j-1}(SO_{2m},U_{m}) \\

\pi_j(SO_{n-1} \times SO_{2m-n+1}) \arrow{r}{} \arrow{u}{} & \pi_j(SO_{2m}) \arrow{r}{} \arrow{u}{} &\pi_j(\operatorname{Gr}) \arrow{r}{} \arrow{u}{}  & \pi_{j-1}(SO_{n-1}\times SO_{2m-n+1}) \arrow{r}{} \arrow{u}{} & \pi_{j-1}(SO_{2m}) \arrow{u}{}\\

\pi_j(SO_{n-1}\times U_{m-n+1}) \arrow{r}{} \arrow{u}{}& \pi_j(U_m) \arrow{r}{} \arrow{u}{} &\pi_j(\operatorname{Gr}_{is}) \arrow{r}{} \arrow{u}{}  & \pi_{j-1}(SO_{n-1}\times U_{m-n+1}) \arrow{r}{} \arrow{u}{}& \pi_{j-1}(U_m) \arrow{u}{}
\end{tikzcd}
\end{center}
We made the identifications $\pi_j(SO_{n-1}\times SO_{2m-n+1},SO_{n-1}\times SO_{2m-n+1}) \cong \pi_j(SO_{2m-n+2},U_{m-n+1}) \cong \pi_j(SO_{2m-n+2},U_{m-n+1})$, by using in the last isomorphism that we are in the stable range of $SO(2m-n+1)$.

Denote the gamma spaces $SO(2n)/U(n)$ by $\Gamma_n$. Observe that $SO(2n)$ is a fibration over $\Gamma_n$ with fiber $U(n)$. It is standard that the relative homotopy groups of a fibration with respect to the fiber are isomorphic to the homotopy groups of the base, see for instance \cite[Theorem 4.41]{Hat}. Hence we have the identification $\pi_j(SO(2n),U(n)) \cong \pi_j(\Gamma_n)$. In conclusion, the previous diagram is equivalent to the following one.

  \begin{center}
\begin{tikzcd}[column sep=tiny]
\pi_j(\Gamma_{m-n+1}) \arrow{r}{a_j} & \pi_j(\Gamma_m) \arrow{r}{} & \pi_{j-1}(P) \arrow{r}{} &  \pi_{j-1}(\Gamma_{m-n+1}) \arrow{r}{a_{j-1}} & \pi_{j-1}(\Gamma_{m}) \\

\pi_j(SO_{n-1} \times SO_{2m-n+1}) \arrow{r}{b_j} \arrow{u}{} & \pi_j(SO_{2m}) \arrow{r}{} \arrow{u}{c_j} &\pi_j(\operatorname{Gr}) \arrow{r}{} \arrow{u}{}  & \pi_{j-1}(SO_{n-1}\times SO_{2m-n+1}) \arrow{r}{} \arrow{u}{} & \pi_{j-1}(SO_{2m}) \arrow{u}{}\\

\pi_j(SO_{n-1}\times U_{m-n+1}) \arrow{r}{} \arrow{u}{}& \pi_j(U_m) \arrow{r}{} \arrow{u}{} &\pi_j(\operatorname{Gr}_{is}) \arrow{r}{} \arrow{u}{}  & \pi_{j-1}(SO_{n-1}\times U_{m-n+1}) \arrow{r}{} \arrow{u}{}& \pi_{j-1}(U_m) \arrow{u}{}
\end{tikzcd}
\end{center}
We want to prove that $\pi_{j-1}(P)$ is trivial up to $j-1=n-1$. To this end, let us show that we are in the stable range of $\Gamma_{m-n+1}$ up to rank $n-1$, and prove that $a_n$ is an epimorphism. The stable range of $\Gamma_{m-n+1} $ is $2(m-n+1)-2$, hence imposing that $n-1$ is in the stable range we obtain $n-1 \leq 2(m-n+1)-2$ which implies $ 3n-1 \leq 2m$, our dimensional hypothesis. Hence $a_j$ is an isomorphism for $j\leq n-1$ and we deduce $\pi_r(P)=0$ for $r\leq n-2$.

To conclude, observe that $\pi_n(\Gamma_{m-n+1})$ is in general no longer in the stable range. Let us check that $a_n$ is always at least an epimorphism, which will imply that $\pi_{n-1}(P)=0$. If $2m\geq 3n$, then we are in the stable range and $a_n$ is an isomorphism. If not, then $2m=3n-1$ and $n$ is odd. But the exact sequence induced by $\Gamma_k \rightarrow \Gamma_{k+1} \rightarrow \mathbb S^{2k}$, see~\cite{Gray}, at rank $n=2m-2n+1$ is the following.
$$ \pi_{2m-2n+1}(\Gamma_{m-n+1}) \rightarrow \pi_{2m-2n+1}(\Gamma_{m-n+2}) \rightarrow \pi_{2m-2n+1}(\mathbb S^{2m-2n+2}) $$
Since $\pi_{2m-2n+1}(\mathbb S^{2m-2n+2})=0$, the first arrow is an epimorphism. This implies that $a_n$ is always an epimorphism and the proof is complete.

%

\end{proof}



\subsection{Proof and discussion of Theorem \ref{main2}}\label{SS.fin}
We proceed with the proof of Theorem \ref{main2}, and a discussion of the result.

\begin{proof}[Proof of Theorem \ref{main2}]
Let $N$ be a compact manifold endowed with a geodesible field $X$. Denote by $e:(N,X)\rightarrow (M,\xi)$ an embedding into a contact manifold $(M,\xi)$. Let us assume that $M$ is overtwisted. Because of the codimension hypothesis, there is an homotopy $F_t:TN \rightarrow TM$ such that $F_0=de$,  $F_1(X) \pitchfork \xi$ and $\xi$ is positively transverse, i.e. by genericity it is needed $\text{dim}(N) < 2\,\text{dim}(M)$, which is clearly satisfied under our assumption $2\,\text{dim}(M)+1 \geq 3\,\text{dim}(N)$. Find isomorphisms $G_t:TM \rightarrow TM$ satisfying $F_t= G_t \circ F_0$. Define $\xi_t=G_t^{-1}(\xi)$ and define $\omega_t=d\alpha \circ G_t^{-1}$. It deforms $\xi$ to a formal contact structure $\xi_1$ satisfying that $F_0(X) \pitchfork \xi_1$.

Denote by $\eta=\ker \beta$ a transverse hyperplane distribution preserved by $X$. We can now apply Lemma~\ref{lemmaHo} { to our embedding with ambient symplectic bundle $(\xi_1,\omega_1)$ to} obtain a formal contact structure $(\tilde \xi,\tilde \omega)$ satisfying that $\tilde \xi \cap TN= \eta$ and, {that} $\eta$ is isotropic. Concatenating homotopies, we constructed a family of symplectic bundles $(\tilde \xi_t, \tilde \omega_t)$, $t\in [0,1]$, such that $(\tilde \xi_0, \tilde \omega_0)= (\xi, d \alpha)$ and $(\tilde \xi_1, \tilde \omega_1)=(\tilde \xi,\tilde \omega)$.

Since $\eta$ is isotropic, the bundle $\eta_\C=\eta \oplus \eta^*$ (endowed with the the standard complex structure) is a complex subbundle of $\tilde \xi_1$, and hence $\eta^*$ naturally lies, over $N$, in the normal bundle of $N$. The formal contact structure splits as $\tilde \xi_1= \eta_\C \oplus (\eta_\C)^{\perp}$ on a small tubular neighborhood of $N$, denoted as $\mathcal{O}p(N) \stackrel{pr}{\to} N$.
For a real constant $A$, take the homotopy of symplectic structures
$$ \tilde{\omega}_t= ((t-1)A+(2-t))\tilde \omega + (t-1) pr^*d\beta, \enspace t\in [1,2], $$
which will be a path of symplectic structures for a big enough $A>0$, as being symplectic is an open condition. We define $\tilde \xi_t=\tilde \xi_1$ for $t\in [1,2]$.
We obtain naturally $(\tilde \xi_t,\tilde \omega_t)$ for $t\in[0,2]$, a family of formal contact structures obtained by concatenating both homotopies. Clearly, we have that
\begin{equation}\label{dbeta}
\tilde \omega_2 \circ de =d \beta
\end{equation}
Now, as usual we undo the homotopy of contact structures by deforming the formal embedding. In order to do it, apply Corollary~\ref{coro:formalDarboux} to find a family of isomorphisms $\tilde G_t: TM \to TM$, $t\in [0,2]$, such that
\begin{itemize}
\item $\tilde G_t=G_t$ for $t\in [0,1]$,
\item $({\tilde G_t}^{-1}(\xi), d\alpha \circ {\tilde G_t})=(\tilde \xi_t, \tilde \omega_t)$ for $t\in [0,2]$.
\end{itemize}
 Thus, we define a family of monomorphisms  $\tilde{F}_t = \tilde G_t \circ F_0$ that satisfy $d\alpha \circ \tilde F_t = d\alpha \circ {\tilde G_t} \circ F_0= \tilde \omega_t \circ F_0$. For $t=2$, using equation (\ref{dbeta}), we have $\omega_2 \circ \tilde F_0=\omega_2 \circ de=d\beta$ and therefore it is a formal iso-Reeb embedding. We conclude applying Theorem \ref{hp1}.

Assume now that $M$ is not overtwisted, and hence $\dim M \geq 3\dim N +2$. Because of the dimension condition we can find an orthogonal symplectic decomposition $\xi|_N = \xi' \oplus L$, with $\operatorname{rank} L=2$ and for every $p\in N$ we have $L_p \cap TN_p=\{0\}_p$. We can assume this as long as $\dim M\geq 2\dim N +4$, which is true for $\dim N \geq 2$. { Hence there exists a family of symplectic bundles $\xi'_t$ such that $\xi_0'=\xi'$ and $X$ is transverse to $\xi_1'$}, and the proof applies verbatim by projecting $\eta$ into $\xi'$, which is a symplectic bundle of rank $2\,\dim M-2\geq 3\,\dim N-1$.
\end{proof}

Observe that, in fact, in Theorem~\ref{main2} we proved that for high enough codimension, any smooth embedding is isotopic to a (small) iso-Reeb embedding for any geodesible field and any invariant distribution. If we were to prove that our Theorem is sharp, we should find a geodesible field with a fixed invariant distribution on a manifold $N$ that does not admit an iso-Reeb embedding into a carefully fixed contact manifold of dimension $3\dim N-2$ or $3\dim N-1$ (depending on the parity of $\dim N$).

What we can prove is that there is a manifold of dimension $4k_0+1$ with a geodesible field and a fixed invariant distribution, and a smooth embedding of such a manifold into $\mathbb S^n$, where $n=3\dim N-4$, which is not deformable into an iso-Reeb embedding. So we are two dimensions away from the perfect sharpness.

\begin{prop} \label{lem:sharp}
There is a sequence of triples $(N_{k}, X_{k},\eta_k)$ of geodesible vector fields on a $k$-dimensional compact manifold $N_k$ with $k=4k_0+1$ such that there is no iso-Reeb embedding of $(N_k, X_k,\eta_k)$ into $(\mathbb S^{n},\xi)$ with $n < 3k-2$ and $\xi$ any contact structure.
\end{prop}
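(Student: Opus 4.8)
The plan is to obstruct the existence of an iso-Reeb embedding by a characteristic-class argument, after first reducing the geometric hypothesis to a purely bundle-theoretic one.

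First I would arrange the reduction. Take $N_k$ to be the mapping torus of a diffeomorphism $\phi\colon F\to F$ of a closed $4k_0$-manifold $F$, so that $\dim N_k=4k_0+1=k$; let $X_k=\partial_\theta$ be the suspension field, which is geodesible exactly as in Subsection~\ref{SS5}, and let $\eta_k=\ker d\theta$ be the fibrewise tangent distribution, with defining $1$-form $\beta=d\theta$. The crucial feature is that $\beta$ is \emph{closed}. Indeed, if $e\colon(N_k,X_k,\eta_k)\to(\mathbb S^n,\xi)$ were an iso-Reeb embedding with contact form $\alpha$, then $e_*X_k=R$ and $\ker e^*\alpha=\eta_k=\ker\beta$ force $e^*\alpha=\beta$ (both evaluate to $1$ on $X_k$), so $e^*d\alpha=d\beta=0$ and hence $de(\eta_k)$ is an \emph{isotropic} subbundle of the symplectic bundle $(\xi,d\alpha)$ along $e(N_k)$. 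Thus an iso-Reeb embedding produces an isotropic subbundle $\eta_k\hookrightarrow\Xi$, where $\Xi:=e^*\xi$ is a symplectic bundle of rank $2m=n-1$ over $N_k$ whose realification satisfies $\Xi_\R\oplus\underline{\R}\cong e^*T\mathbb S^n$ and is therefore stably trivial. The strategy is to choose $(F,\phi)$ so that no such isotropic subbundle exists when $n<3k-2$.

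Next I would translate this into an obstruction. After choosing a compatible complex structure, an isotropic embedding $\eta_k\hookrightarrow\Xi$ is the same as a complex-subbundle embedding $\eta_k\otimes\C\hookrightarrow\Xi$ together with a complex complement $Q$ of complex rank $m-(k-1)$; the stable triviality of $\Xi_\R$ then yields the $K$-theoretic constraint $r[Q]=-2[\eta_k]$ in $\widetilde{KO}(N_k)$ (with $r$ the realification, using $r\circ c=2$), and $Q$ must moreover be represented by a \emph{genuine} bundle of complex rank $m-(k-1)$. At the level of classifying maps this is precisely the problem of lifting the map $N_k\to\operatorname{Gr}$ classifying $\eta_k\subset\Xi$ into the isotropic Grassmannian $\operatorname{Gr}_{is}$, i.e.\ the lifting problem analysed in Lemma~\ref{isotr}, whose sufficient condition is $2m\ge 3k-1$. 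The content of the Proposition is to produce $(F,\phi)$ for which the primary obstruction to this lift, living in $H^k(N_k;\pi_{k-1}(P))$ with $P$ the homotopy fibre of $\operatorname{Gr}_{is}\hookrightarrow\operatorname{Gr}$, is nonzero for \emph{every} admissible $\Xi$ once $2m\le 3k-4$.

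The main obstacle is the realisation of this obstruction at the sharp dimension. The coset-invariant (hence stable) information is governed by the Pontryagin classes of $Q_\R$, which depend only on $r[Q]=-2[\eta_k]$; a direct computation — for instance with $F=\mathbb{CP}^{2k_0}$, where $p(\eta_k)^{-2}$ is nonzero up to degree $4k_0$ — shows that these alone force merely the weaker bound $m\ge 5k_0$, because $c_{2k_0}(Q)$ can be cancelled by adding an element of $\ker r$. To reach $m\ge 6k_0$, equivalently $n\ge 3k-2$, one must instead use the genuinely \emph{unstable} obstruction detected by the map $a_k\colon\pi_k(\Gamma_{m-k+1})\to\pi_k(\Gamma_m)$ of Lemma~\ref{isotr}, which is exactly the homotopy group lying outside the stable range of $\Gamma_{m-k+1}=SO(2(m-k+1))/U(m-k+1)$. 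I would therefore pick the monodromy $\phi$ so that the clutching construction of $\eta_k$ over the $\mathbb S^1$-direction places a nonzero element of $\pi_{k-1}(P)$ in the $H^{k-1}(F)\otimes H^1(\mathbb S^1)$ summand of $H^k(N_k)$, where it is detected by pairing with the fundamental class. Carrying out this unstable computation and exhibiting an explicit $(F,\phi)$ that hits a nonzero class is the hard part; it is also the reason one can only realise the obstruction two dimensions below the threshold, leaving the $\{3k-2,3k-1\}$ gap to perfect sharpness noted before the statement.
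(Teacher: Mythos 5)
Your opening reduction coincides exactly with the paper's: take $\beta=d\theta$ closed, note that an iso-Reeb embedding forces $e^*\alpha=\beta$, hence $e^*d\alpha=0$, so $\eta_k$ becomes an isotropic subbundle of $(\xi,d\alpha)$ and one gets a complex splitting $e^*\xi=\left(\eta_k\otimes\C\right)\oplus Q$ after choosing a compatible complex structure. But from that point on your argument has a genuine gap, and it is created by an unnecessary (and incorrect) weakening of the constraint. You replace the Whitney-sum identity by its $KO$-theoretic shadow $r[Q]=-2[\eta_k]$, observe that the Pontryagin classes of $Q_\R$ only see this class and hence only force $\operatorname{rank}_\C Q\geq k_0$ (your bound $m\geq 5k_0$), and conclude that one must resort to an unstable homotopy-theoretic obstruction --- which you then explicitly leave uncomputed (``the hard part''). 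The flaw is that $Q$ is not an arbitrary complex bundle with prescribed realification: it is the honest symplectic-orthogonal complement of $\eta_k\otimes\C$ inside $\Xi=e^*\xi$, and the Chern classes of $\Xi$ \emph{as a complex bundle} vanish in all positive degrees for pure degree reasons, since $c_j(\xi,J)\in H^{2j}(\mathbb S^n)=0$ for $n$ odd. The Whitney formula therefore pins down the actual Chern classes, $c(Q)=c(\eta_k\otimes\C)^{-1}$ in $H^*(N_k)$, with no ``$\ker r$'' freedom whatsoever; and $c_{2k_0}(Q)\neq 0$ forces $\operatorname{rank}_\C Q\geq 2k_0$, i.e. $m\geq 6k_0$, i.e. $n\geq 12k_0+1=3k-2$. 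So the sharp bound is reached by characteristic classes alone, and the entire unstable detour you propose is both unjustified in its premise and never carried out.

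This is precisely the paper's proof: it takes $N_k=W\times\mathbb S^1$ (trivial monodromy --- no clever clutching needed) with $W$ a closed $4k_0$-manifold whose Pontryagin classes all vanish except $p_{k_0}(W)\neq 0$, as constructed by Kervaire--Milnor. Then rationally $c(TW^{\C})=1+(-1)^{k_0}p_{k_0}(W)$, the cross terms in the Whitney formula disappear, and $c_{2k_0}(Q)=\mp p_{k_0}(W)\neq 0$, giving $n\geq 3k-2$ in three lines. (Even your choice $F=\CP^{2k_0}$ would work at the level of Chern classes, since $c(Q)=(1-x^2)^{-(2k_0+1)}$ has nonzero coefficient $\binom{3k_0}{k_0}$ on $x^{2k_0}$; the point of the Kervaire--Milnor choice is only to kill the cross terms transparently.) Finally, your closing sentence misreads the ``two dimensions from sharpness'' remark: the gap is between the obstruction threshold $3k-2$ of the Proposition and the flexibility threshold of Theorem~\ref{main2}, not a failure to realise an obstruction class; no such realisation problem arises in the correct argument.
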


\begin{proof}
Let $W$ be a compact manifold such that $\dim W=4k_0$. Assume its Pontryagin classes $p_j(W)$ are all vanishing except the top one $p_{k_0}(W)$, which is non trivial (such as the manifolds constructed in~\cite{Miln}). Consider the manifold endowed with a geodesible vector field $(N=W \times \mathbb S^1, \partial_\theta)$ of dimension $k=4k_0+1$, with invariant $1$-form $d\theta$. The distribution $\eta=\ker d\theta$ is given by $TW$ seen as a distribution. If it admits a Reeb embedding into $(\mathbb S^n,\xi)$, we would have that $TW$ is an isotropic subspace of $\xi$. This follows from the fact that $d\theta$ is closed. Indeed, if we have a Reeb embedding $e$, there is a contact form $\alpha$ such that $e^*\alpha=d\theta$, so $e^*d\alpha=0$ and hence $d\alpha|_{TW}=0$.

Therefore, we have the decomposition $\xi|_N= TW^\mathbb{C} \oplus V$, where $V$ is the symplectic orthogonal to $TW^\mathbb{C}$. Using the Whitney sum formula for the total Chern class, we obtain that $0=c_{2k_0}(TW^\mathbb{C})+c_{2k_0}(V)$. Hence $V$ is of rank at least $4k_0$. This implies that $n\geq 8k_0+4k_0+1=3k-2$. For instance, $(\mathbb{C}P^2 \times \mathbb S^1, \partial_\theta)$ does not admit a Reeb embedding into $(\mathbb S^{11},\xi)$.
\end{proof}


\section{Final remarks}\label{S:end}

To conclude, let us make a few observations about the results in Section~\ref{S4} that are of independent interest. In Subsection~\ref{SS:ex} we provide some natural examples of iso-Reeb embeddings that appear in Contact Geometry, and in Subsection~\ref{topo} we analyze the topology of the moduli space of iso-Reeb embeddings, thus illustrating the wide range of iso-Reeb embeddings that our construction yields.

\subsection{Examples}\label{SS:ex}
In this section we give some additional examples of formal iso-Reeb embeddings:
\\

\noindent \emph{Formal isotropic $\eta$.}
Let $X$ be a geodesible vector field on $N$ with associated $1$-form $\beta$, and denote $\ker \beta=\eta$. Fix an embedding $e:N\rightarrow (M,\xi)$. Assume we can formally deform the embedding in such a way that $X$ is transverse to $\xi$ and $\eta$ is an isotropic subspace. Then perturbing the symplectic form as done in the proof of Theorem~\ref{main2}, we prove that it is a small formal iso-Reeb embedding.
\\

\noindent \emph{Totally isotropic embeddings.}
Consider an embedding $e: N \to (M, \xi)$ that is formal isotropic, we can actually make it isotropic by the $h$--principle for { isotropic embeddings or Lagrangian immersions \cite[Sections 12.4 and 16.1]{hprinc}}. So we assume that it is isotropic. Take any geodesible vector field $X$ on $N$ that preserves $\ker \beta=\eta$. We have the decomposition $TN=\langle X \rangle \oplus \eta$. Then, by the Weinstein neighborhood theorem $TM|_{N}= TN^{\C} \oplus V \oplus \langle R \rangle= \eta^\C \oplus \langle X, JX \rangle \oplus  V \oplus \langle R \rangle$, where $V$ is the symplectic orthogonal to $TN^{\C}$ inside $\xi$.
We claim that there is an arbitrarily small $C^{\infty}$-perturbation of the isotropic embedding that makes $X$ transverse and $\eta$ remains isotropic. The way of producing it is to flow the image $e(N)$ through the flow associated to $JX$. Do note that
$$\alpha(\mathcal L_{JX} X)= \alpha([JX, X])= d\alpha(JX, X)- X(\alpha (JX))- JX(\alpha (X))= d\alpha(X, JX) >0.$$
This shows that the image of $X$ through the flow becomes transverse to $\xi$. On the other hand, we obtain for any $Y\in \eta$ that, $\alpha(\mathcal L_{JX} Y)= 0$ and thus $\eta$ remains tangent to $\xi$. By perturbing the symplectic structure in $\eta^\C$ as in the proof of Theorem \ref{main2}, it is clear that it is a small formal iso-Reeb embedding.

\begin{Remark}\label{Emmy}
An alternative explanation of the last example was suggested to us by Emmy Murphy: apply the $h$-principle for isotropic immersions to make the embedding into an isotropic immersion {(by genericity, in this codimension we can assume that it is an embedding)}. There is a neighborhood of the embedding $\mathcal{O}p(N)$ contactomorphic  to a neighborhood of the zero section of the bundle $T^*N \times \R \times S$, where $S$ is a conformal symplectic bundle orthogonal to $T^*N$. The contactomorphism is provided by fixing the standard contact form $\alpha_{std} = dt - \lambda_{Liou}$ over $T^* N\times \R$, where $t \in \R$ and $\lambda_{Liou}$ is the canonical Liouville form in the cotangent bundle. Fix your geodesible vector field $(N, X, \eta= \ker \beta)$. There is a canonical embedding $\tilde{\beta}: N \to \mathbb S(T^*N) \subset T^* N \times \R$. By the universal property of the Liouville form, we have $\tilde{\beta}^* \lambda_{Liou}= \beta$. This implies, just by definition, that $\tilde{\beta}$ is a iso-Reeb embedding. In other words, if the vector field $X$ is geodesible on $N$, then it can be understood as the restriction of the geodesic flow on $\mathbb S(T^*N)$ and the geodesic flow is just the Reeb flow.

\end{Remark}

\subsection{The topology of the space of (small) iso-Reeb embeddings} \label{topo}

Finally, in this subsection, we compare the topology of the moduli space of iso-Reeb embeddings with the topology of the moduli space of smooth embeddings. To this end, we introduce some notation. For a compact manifold $N$ endowed with a geodesible field $X$, and target contact manifold $(M,\xi)$, we denote the space of iso-Reeb embeddings of $(N,X,\eta)$ into $(M,\xi)$ as $\mathfrak{Reeb}(N,M)$ and the space of formal iso-Reeb embeddings as $\mathcal{F}\mathfrak{Reeb}(N,M)$. Similarly we denote the space of small iso-Reeb embeddings as $\mathfrak{Reeb}_s(N,M)$ and the space of small formal iso-Reeb embeddings as $\mathcal{F}\mathfrak{Reeb}_s(N,M)$. In these last spaces we have made the notation minimal, since we should refer to  $(N,X,\eta,M,\xi)$ instead of $(N,M)$. Finally, denote by $\mathcal{S}(N,M)$ the space of smooth embeddings of $N$ in $M$. We have the following commutative diagram, where the maps are given by the natural inclusions.
\begin{center}
\begin{tikzcd}
\mathfrak{Reeb}(N,M) \arrow[hookrightarrow]{r}{i} & \mathcal{F}\mathfrak{Reeb}(N,M) \arrow[hookrightarrow]{d}{j}& &  \mathfrak{Reeb}_s(N,M) \arrow[hookrightarrow]{r}{i_s} & \mathcal{F}\mathfrak{Reeb}_s(N,M) \arrow[hookrightarrow]{d}{j_s} \\

& \mathcal{S}(N,M) & & & \mathcal{S}(N,M)
\end{tikzcd}
\end{center}

Using this notation, Theorem \ref{hp1} implies that if $(M,\xi)$ is overtwisted, and we only consider embeddings that do not intersect a fixed overtwisted disk, then $i$ is a homotopy equivalence. Theorem \ref{hp2} implies that $i_s$ is always a homotopy equivalence. In both cases we assume $\dim N < \dim M$. A parametric discussion of Theorem \ref{main2} implies that adding codimension is translated into isomorphisms in higher homotopy groups induced by $j$ and $j_s$.

\begin{Corollary}\label{smooth}
Let $N$ be a compact manifold endowed with a geodesible vector field $X$, and $(M,\xi)$ a contact manifold.
\begin{itemize}
\item If $\dim M > 3\dim N + 2 + k$ then $$j_s^r: \pi_{r}(\mathcal{F}\mathfrak{Reeb}_s(N,M)) \rightarrow \pi_r(\mathcal{S}(N,M)) $$ is an isomorphism for $r\leq k$.
\item If $M$ is overtwisted, $\dim M > 3\dim N+k$ and we consider embeddings not intersecting a fixed overtwisted disk, then $$ j^r: \pi_{r}(\mathcal{F}\mathfrak{Reeb}(N,M)) \rightarrow \pi_r(\mathcal{S}(N,M)) $$ is an isomorphism for $r\leq k$.
\end{itemize}
\end{Corollary}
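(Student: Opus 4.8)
The statement concerns only the forgetful maps $j_s$ and $j$, which send a (small) formal iso-Reeb embedding to its underlying smooth embedding; it is therefore a purely homotopy-theoretic assertion about formal data and does not itself invoke the $h$-principle (the $h$-principle of Theorems~\ref{hp1} and~\ref{hp2} governs the companion maps $i_s$ and $i$). The plan is to treat $j_s$ (resp.\ $j$) as a Serre fibration whose fiber over a fixed smooth embedding $e$ is the space $\mathcal Z$ of (small) formal iso-Reeb structures carried by $e$, i.e.\ the space of homotopies $F_t$ of monomorphisms with $F_0=de$ meeting the requirements of Definition~\ref{sfR} (resp.\ Definition~\ref{fR}) at $t=1$. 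By the long exact sequence of this fibration, $j_s^r$ (resp.\ $j^r$) is an isomorphism for every $r\le k$ exactly when $\mathcal Z$ is $k$-connected, so the whole corollary reduces to a connectivity estimate for $\mathcal Z$.

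To compute $\mathcal Z$ I would run the construction in the proof of Theorem~\ref{main2} relative to $e$ and parametrically. That construction assembles a formal iso-Reeb structure from three pieces: a formal transversalisation of $X$ to $\xi$; a homotopy of conformal symplectic structures on $\xi|_N$ after which $\eta$ becomes isotropic (this is precisely the content of Lemma~\ref{lemmaHo}, whose homotopy-theoretic core is Lemma~\ref{isotr}); and the affine rescaling $\omega_t=((t-1)A+(2-t))\tilde\omega+(t-1)\,pr^{*}d\beta$ that restores $d\beta$ on $\eta$. The transversalisation and the rescaling each move inside contractible or very highly connected families and so do not affect the leading-order bound; up to homotopy $\mathcal Z$ is thus the space of sections over $N$ of the bundle with fiber the path space $P=\operatorname{Path}(\operatorname{Grass}(n-1,\mathbb R^{2m}),\operatorname{Grass}_{is}(n-1,\mathbb R^{2m}))$ of Lemma~\ref{isotr}. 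In the overtwisted case this is taken inside the full symplectic bundle $\xi$ of rank $2m$; in the general case one first splits off a rank-$2$ symplectic subbundle $L$ transverse to $TN$, exactly as in the proof of Theorem~\ref{main2}, and performs the isotropic homotopy inside $\xi'$ of rank $2m-2$. This rank-$2$ reduction is the sole source of the extra $+2$ in the first bound.

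The decisive step is then to upgrade the connectivity computation of Lemma~\ref{isotr}: by obstruction theory for section spaces over the $n$-complex $N$, the fiber $\mathcal Z$ is $k$-connected once $P$ is connected in a range higher by $k$ than the one needed for mere existence in Lemma~\ref{isotr}. Re-running the same diagram chase, this amounts to demanding that the stabilisation maps $a_j\colon\pi_j(\Gamma_{m-n+1})\to\pi_j(\Gamma_m)$ be isomorphisms in the correspondingly wider range, with the top-degree epimorphism argument (the one supplied for $a_n$ by the fibration $\Gamma_{m-n+1}\to\Gamma_{m-n+2}\to\mathbb S^{2(m-n+1)}$ in Lemma~\ref{isotr}) gaining the final degree. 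Since the stable range of $\Gamma_{m-n+1}$ is $2m-2n$, the inequality $n+k\le 2m-2n$, i.e.\ $2m\ge 3n+k$, is what is required; this reads $\dim M>3\dim N+k$ for the full-rank overtwisted case, and, after replacing $2m$ by $2m-2$, becomes $\dim M>3\dim N+2+k$ in the small case.

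I expect the main obstacle to be exactly this connectivity upgrade: tracking in which degree each $a_j$ is an isomorphism as opposed to merely an epimorphism, so that the constants come out as stated, and verifying that the transversalisation and rescaling factors of $\mathcal Z$ are genuinely connected enough not to interfere. A secondary point requiring care is checking that the forgetful map is a fibration (or at least a quasifibration) and that the section-space identification of $\mathcal Z$ is natural enough to run parametrically over all of $\mathcal S(N,M)$ at once.
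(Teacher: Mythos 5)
Your proposal is correct and follows essentially the same route as the paper: the paper's proof likewise reduces the statement to running Theorem~\ref{main2} parametrically, observing that the only step whose connectivity matters is Lemma~\ref{lemmaHo}/Lemma~\ref{isotr}, and demanding that the fiber $P=\operatorname{Path}(\operatorname{Grass}(n-1,\mathbb{R}^{2m}),\operatorname{Grass}_{is}(n-1,\mathbb{R}^{2m}))$ be $(n+k)$-connected, which via the stable range $2(m-n+1)-2$ of $\Gamma_{m-n+1}$ gives exactly your inequality $n+k\leq 2m-2n$ and, after the rank-$2$ reduction, the $+2$ in the small case. Your fibration/long-exact-sequence framing and the section-space identification of the fiber merely make explicit what the paper states in compressed form ("works parametrically by adding codimension"), so the two arguments coincide in substance.
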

\begin{Remark}
Observe that for $k=0$, we are increasing by $1$ the minimum codimension with respect to Theorem~\ref{main2}, this is because we are getting more. Theorem~\ref{main2} gives surjectivity of $j^0$ and here we obtain an isomorphism at the $\pi_0$ level.
\end{Remark}

\begin{proof}
Let us discuss the case where $M$ is overtwisted, the other case is analogous. The result follows from the proof of Theorem~\ref{main2}, which works parametrically by adding codimension. We want to prove that $j$ induces an isomorphism in homotopy groups up to rank $k$. To achieve this we only need that in the key step of Theorem~\ref{main2}, which is Lemma~\ref{lemmaHo}, the fiber $P=\operatorname{Path}(\operatorname{Grass}(n-1,\mathbb{R}^{2m}),\operatorname{Grass}_{is}(n-1,\mathbb{R}^{2m}))$ is $n+k$ connected. Following the notation and computations of Lemma~\ref{isotr}, we need that the space $\Gamma(m-n+1)$ is in the stable range up to rank $n+k$. Since the stable range is up to $2(m-n+1)-2$, we need to impose that $n+k\leq 2(m-n+1)-2$. This implies that $\dim M=2m+1>3\dim N+k$.
\end{proof}

By combining Corollary~\ref{smooth} with Theorem~\ref{hp2}, we deduce that we can replace $\mathcal{F}\mathfrak{Reeb}_s$ by $\mathfrak{Reeb}_s$ in Corollary \ref{smooth}, i.e. the isomorphisms of homotopy groups are between the spaces of genuine small iso-Reeb embeddings and smooth embeddings.

\end{document}